\def\ppen{\penalty 300 }
\let\col=\colon
\def\colon{\col\ppen}
\theoremstyle{plain} 
\newtheorem{thm}{Theorem}[section]
\newtheorem{prop}[thm]{Proposition}
\newtheorem{lem}[thm]{Lemma}
\newtheorem{cor}[thm]{Corollary}
\theoremstyle{definition}
\newtheorem{defn}[thm]{Definition}
\newtheorem{rem}[thm]{Remark}
\newtheorem{ex}[thm]{Example}
\newtheorem{nota}[thm]{Notation}
\numberwithin{equation}{section}
\theoremstyle{plain}
\renewcommand{\theta}{\vartheta}
\renewcommand{\phi}{\varphi}
\renewcommand{\epsilon}{\varepsilon}
\renewcommand{\subset}{\subseteq}
\newcommand{\N}{\mathbb N}
\newcommand{\Z}{\mathbb Z}
\newcommand{\T}{\mathbb T}
\newcommand{\R}{\mathbb R}
\newcommand{\C}{\mathbb C}
\newcommand{\Cl}{\mathrm{Cl}}
\newcommand{\staralg}{\mathop{\rm\ast\mathchar `\-alg}}
\DeclareMathOperator{\Mor}{Mor}
\DeclareMathOperator{\id}{id}
\DeclareMathOperator{\Aut}{Aut}
\DeclareMathOperator{\Irr}{Irr}
\DeclareMathOperator{\spanlin}{span}
\DeclareMathOperator{\tr}{tr}
\DeclareMathOperator{\Tr}{Tr}
\DeclareMathOperator{\Rep}{Rep}
\DeclareMathOperator{\rank}{rank}
\DeclareMathOperator{\End}{End}
\newcommand{\RCat}{\mathfrak{C}}
\newcommand{\Lin}{\mathscr{L}}
\newcommand{\Cay}{\mathrm{Cay}}
\newcommand{\F}{\mathcal{F}}
\newcommand{\Olg}{\mathscr{O}}
\newcommand{\im}{\mathrm{i}}
\newcommand{\glin}{\mathfrak{gl}}
\newcommand{\slin}{\mathfrak{sl}}
\newcommand{\un}{\mathfrak{u}}
\newcommand{\su}{\mathfrak{su}}
\newcommand{\Gr}{\mathfrak{G}}
\def\makebcirc#1#2{\ooalign{$#1\bigcirc\mathsurround=0pt$\cr \hfil$#1#2\mathsurround=0pt$\hfil}}
\newcommand{\OT}{\mathbin{\mathpalette\makebcirc{\perp}}}
\newcommand{\mergepart}{\Partition{
\Pblock 1to0.5:1,2
\Psingletons 0to0.5:1.5
}}
\def\widebreve{\mathpalette\wide@breve}
\def\wide@breve#1#2{\sbox\z@{$#1#2$}%
     \mathop{\vbox{\m@th\ialign{##\crcr
\kern0.08em\brevefill#1{0.8\wd\z@}\crcr\noalign{\nointerlineskip}%
                    $\hss#1#2\hss$\crcr}}}\limits}
\def\brevefill#1#2{$\m@th\sbox\tw@{$#1($}%
  \hss\resizebox{#2}{\wd\tw@}{\rotatebox[origin=c]{90}{\upshape(}}\hss$}
\begin{document}
\title{Some examples of quantum graphs}
\author{Daniel Gromada}
\address{Czech Technical University in Prague, Faculty of Electrical Engineering, Department of Mathematics, Technická 2, 166 27 Praha 6, Czechia}
\email{gromadan@fel.cvut.cz}
\thanks{I would like to thank Simon Schmidt, Christian Voigt, and Moritz Weber for valuable discussions about quantum graphs. I thank to Julien Bichon for discussing monoidal equivalences related to $SO_n$.}
\thanks{This work was supported by the project OPVVV CAAS CZ.02.1.01/0.0/0.0/16\_019/0000778}
\date{\today}
\subjclass{20G42 (Primary); 05C25, 18M25, 18M40 (Secondary)}
\keywords{Quantum graph, Cayley graph, Clifford algebra, 2-cocycle twist}

\begin{abstract}
We summarize different approaches to the theory of quantum graphs and provide several ways to construct concrete examples. First, we classify all undirected quantum graphs on the quantum space $M_2$. Secondly, we apply the theory of 2-cocycle deformations to Cayley graphs of abelian groups. This defines a twisting procedure that produces a quantum graph, which is quantum isomorphic to the original one. For instance, we define the anticommutative hypercube graphs. Thirdly, we construct an example of a quantum graph, which is not quantum isomorphic to any classical graph.
\end{abstract}

\maketitle
\section*{Introduction}

Quantum graphs have been introduced very recently and hence form a very young and perspective field to study. In order to briefly review their history, we should go back to the work of Wang \cite{Wan98} and Banica \cite{Ban99,Ban02}, who interpreted finite-dimensional C*-algebras as finite quantum spaces and studied their quantum symmetries. Independently on these results, finite quantum sets were defined in terms of Frobenius algebras in \cite{CPV13}. Quantum graphs (or quantum relations) were first defined by Weaver in \cite{Wea12,Wea21}. He did not build on the earlier results we just mentioned, actually he did not assume any finiteness at all working over general von Neumann algebras. Finite quantum graphs were then reformulated to the framework of Frobenius algebras in \cite{MRV18} and then further studied from the categorical perspective in \cite{MRV19}. To close the circle, quantum graphs were also studied in the C*-algebraical framework in \cite{BCE+20} and found some connections to the theory of operator algebras in \cite{BEVW20}.

The character of this article is to large extent expository. We aim to summarize different approaches to quantum graphs and present some results from new viewpoints. Most importantly, however, this work contains several original concrete examples of quantum graphs and presents techniques of how to construct such examples. The classical graph theory tends to be very graphical and builds on many concrete examples of interesting graphs. In contrast, the literature on quantum graphs is missing such examples. We hope to convince the reader that constructing quantum graphs is almost as easy as constructing the classical ones.

The article can be divided into two parts. In the first part (Sections \ref{sec.qg}--\ref{sec.concrete}) we study quantum graphs as such. We bring three equivalent definitions of quantum graphs over a quantum space $X$ being based on
\begin{enumerate}
\item \emph{edge projection}, i.e.\ a projection $\tilde A\in C(X)\otimes C(X)^{\rm op}$ \cite[Section~VII]{MRV18},
\item \emph{adjacency matrix}, i.e.\ an operator $A\colon l^2(X)\to l^2(X)$ \cite[Def.~V.1]{MRV18},
\item an operator space $V$, such that $C(X)'VC(X)'\subset V$ \cite{Wea12}.
\end{enumerate}

In Section~\ref{sec.constructions}, we define some basic quantum graph constructions such as quantum graph isomorphism, subgraphs, quotient graphs, multiple edges, weighted graphs. In Section~\ref{secc.smallclass}, we bring the first set of examples -- we classify all quantum graphs over the quantum space $M_2$ given by $2\times 2$ matrices. This result was actually very recently independently obtained by Junichiro Matsuda \cite{Mat21} (his result is actually more general since he considers also quantum graphs with non-tracial $\delta$-form).

In the second part of the article, we study quantum graphs from the viewpoint of their quantum symmetries and related representation categories. In Section~\ref{sec.qsym}, we recall the definition of the quantum automorphism group and quantum isomorphisms of quantum graphs.

The main result of the whole article is presented in Section~\ref{sec.twist}, where we interpret the theory of comodule algebra twists (see e.g.\ \cite[Section~7.5]{Mon93}) in terms of quantum graphs. Given a finite abelian group $\Gamma$, we consider the twist of $C(\Gamma)\simeq\C\Gamma$ taken as a comodule algebra. We show that this construction extends to Cayley graphs of $\Gamma$. This allows to construct quantum analogues of such Cayley graphs, which are then quantum isomorphic to the original ones. This construction can be actually viewed as a special case of a more general 2-cocycle twisting procedure formulated in \cite{MRV19}. Nevertheless, the case of Cayley graphs is much more straightforward.

In Section~\ref{sec.rook}, we illustrate the twisting on the series of groups $\Gamma=\Z_n\times\Z_n$. The classical space of $n^2$ points represented by the algebra $C(\Gamma)$ is twisted to the quantum space $M_n$ defined through the matrix algebra $M_n(\C)$. This way, we can for instance define the quantum rook's graph. Besides that, this gives an alternative proof of a well-known fact that the classical space of $n^2$ points is quantum isomorphic with the quantum space $M_n$, which consequently implies that all quantum spaces of fixed size (the dimension of the algebra) are quantum isomorphic. (See also \cite{Ban99,Ban02,RV10,GProj}.)

In Section~\ref{sec.cube}, we study the case $\Gamma=\Z_2^n$, i.e.\ the cube-like graphs. Here, the comodule algebra twist gives us the Clifford algebra \cite{AM02}, which then forms the underlying space for \emph{quantum cube-like graphs}.

Finally, in Section~\ref{sec.smallnoniso}, we get back to the examples of quantum graphs over $M_n$ with small $n$. We show that all the simple quantum graphs over $M_2$ that were classified in Section~\ref{secc.smallclass} are examples of quantum Cayley graphs and hence are quantum isomorphic to their classical counterparts. In contrast, we construct an example of a simple graph on $M_3$, which is \emph{not} quantum isomorphic to a classical graph.

\subsection*{Notation}
To improve the clarity of the article, we are going to use two different symbols for involution -- the asterisk $*$ and the dagger $\dag$. We will usually use $*$ to denote the $*$ operation in a C*-algebra or a $*$-algebra. In contrast, we will use $\dag$ to denote taking an adjoint of concrete linear maps between Hilbert spaces.

In particular, note the following distinction: Consider a C*-algebra $C(X)$ equipped with a trace and the associated GNS Hilbert space $l^2(X)$. An element $x\in l^2(X)$ can be interpreted as an element of $C(X)$ in which case we denote by $x^*$ its adjoint with respect to the $*$-operation in $C(X)$. Alternatively, we can interpret $x$ as a linear operator $\C\to l^2(X)$, in which case we denote by $x^\dag\colon l^2(X)\to\C$ its Riesz dual with respect to the inner product in $l^2(X)$.

We are also going to speak about representation categories, which are concrete monoidal involutive categories, so we are going to denote the involution by $\dag$ and refer to them as $\dag$-categories.

Sometimes, we are going to illustrate some equations using string diagrams. We decided to be consistent with \cite{Vic11,MRV18,MRV19} and inconsistent with the literature on partition categories (in particular, with \cite{GroAbSym}), so all diagrams are now to be read from bottom to top.


We will often work with operators between some tensor powers of some vector spaces. Therefore, we adopt the ``physics notation'' with upper and lower indices for entries of these ``tensors''. That is, given $T\colon V^{\otimes k}\to V^{\otimes l}$ for some $V=\C^N$, we denote
$$T(e_{i_1}\otimes\cdots\otimes e_{i_k})=\sum_{j_1,\dots,j_l=1}^NT^{j_1\cdots j_l}_{i_1\cdots i_k}(e_{j_1}\otimes\cdots\otimes e_{j_l})$$
We will sometimes shorten the notation and write $T_\mathbf{i}^\mathbf{j}$ using multiindices $\mathbf{i}=(i_1,\dots,i_k)$, $\mathbf{j}=(j_1,\dots,j_l)$.

If not mentioned otherwise, all ($*$-)algebras and all ($*$-)homomorphisms are always assumed to be unital.


\section{Quantum graphs}
\label{sec.qg}

\subsection{Quantum sets}

Let $A$ be a finite-dimensional C*-algebra and let $\psi$ be a faithful state on $A$. By the GNS construction, this defines an inner product $\langle a,b\rangle=\psi(a^*b)$. Likewise, we can define a non-degenerate bilinear form $(a,b)=\psi(ab)$. This gives $A$ the structure of a \emph{Frobenius algebra}.

Let us denote the multiplication on $A$ by $m\colon A\otimes A\to A$. Now we can compute its adjoint $m^\dag\colon A\to A\otimes A$ with respect to the above defined inner product. We say that the state $\psi$ defining the inner product is a \emph{$\delta$-form} if $mm^\dag=\delta^2\,\id$. 

In the theory of Frobenius algebras, one typically requires that $mm^\dag=\id$ without the factor $\delta^2$. This can be achieved by rescaling the definition of the inner product. That is, we may rather define $\langle a,b\rangle=\delta^2\psi(a^*b)$ and $(a,b)=\delta^2\psi(ab)$. Such a Frobenius algebra is called \emph{special}. From now on, we will only use this scaled version of the inner product and the bilinear form. If necessary, we will denote this bilinear form also by $R^\dag\colon A\otimes A\to\C$.

Denote by $\eta$ the unit of the algebra $A$ taken as a map $\eta\colon\C\to A$. By definition of the inner product, we have $\eta^\dag=\delta^2\,\psi$ (recall that we work with the scaled version of the inner product; for the standard GNS inner product, we would have $\eta^\dag=\psi$). In the following text, we will typically work only with the functional $\eta^\dag$ and call it the \emph{counit} and we will try to avoid using the state $\psi$ to prevent confusion about the scaling factors. Once again, recall that $\langle a,b\rangle=\eta^\dag(a^*b)$. (This counit $\eta^\dag$ should not be confused with the quantum group counit $\epsilon$.)

If the state $\psi$ is tracial, then the bilinear form $(a,b)=\delta^2\psi(ab)=\eta^\dag(ab)$ is symmetric and the corresponding Frobenius algebra is also called \emph{symmetric}. Any finite-dimensional C*-algebra is of the form $A=\bigoplus_i M_{n_i}(\C)$ and possesses a unique tracial $\delta$-form \cite[Proposition~2.1]{Ban99}. This gives $A$ a unique structure of a symmetric Frobenius algebra. The counit is in this case of the form
$$\eta^\dag(x)=\sum_i n_i\Tr(x_i).$$
See \cite{Vic11} for more information about Frobenius algebras and their relation with C*-algebras.

This serves as the basis for a definition of a quantum space.

\begin{defn}
A \emph{finite quantum set} $X$ of size $N$ is an $N$-dimensional C*-algebra equipped with the unique structure of a symmetric Frobenius algebra as described above. We denote by $C(X)$ the underlying C*-algebra and by $l^2(X)$ the associated Hilbert space (with the rescaled inner product).
\end{defn}

The history of this concept goes back to Wang \cite{Wan98}, who studied the quantum symmetries of the classical set of $N$ points as well as the quantum set formed by the algebra of matrices $M_n(\C)$. His ideas were further developed by Banica \cite{Ban99,Ban02}, who also introduced the notion of a \emph{$\delta$-form}. Quantum sets in terms of Frobenius algebras were defined in \cite{MRV18} based on a certain ``finite Gelfand duality'' result from \cite{CPV13}.

Note that some authors (e.g.~\cite{BCE+20}) drop the condition that the Frobenius algebra is symmetric (the state is a trace). However, all quantum spaces and graphs constructed here will be defined in the symmetric (tracial) setting.

In order to make everything clear, let us state two obvious examples.

\begin{ex}[Classical finite set]
We will denote by $X_N$ the classical finite set of $N$ elements. This is a special case of a quantum set: We take the algebra $C(X_N)$ of functions on $X_N$ as our C*-algebra. The tracial state $\psi$ is the normalized counting measure $\psi(f)=\frac{1}{N}\sum_{x\in X_N}f(x)$. This is a $\delta$-form with $\delta^2=N$. Rescaling it, we get the ordinary counting measure $\eta^\dag(f)=\sum_{x\in X_N}f(x)$ corresponding to the standard inner product on $l^2(X)$ given by $\langle f,g\rangle=\eta^\dag(f^*g)=\sum_{x\in X_N}\overline{f(x)}g(x)$.
\end{ex}

\begin{ex}[Matrix algebra]
\label{ex.Mn}
Another obvious example is the matrix algebra $A=M_n(\C)$. We will denote by $M_n$ the corresponding finite quantum set with $C(M_n)=M_n(\C)$. Here, we take the normalized trace as a state $\psi=\tr$, which is a $\delta$-form with $\delta^2=n^2$, so the counit is given by $\eta^\dag=n^2\tr=n\Tr$, where $\Tr$ is the standard trace. Denoting by $(e_{ij})$ the standard basis of matrices with 1 at position $(i,j)$ and 0 otherwise, we have that $\langle e_{ij},e_{kl}\rangle=n\delta_{ik}\delta_{jl}$, so it forms an orthogonal basis with every element having norm $\sqrt n$. The multiplication expressed in this basis has entries $m^{ab}_{cdef}=\delta_{ac}\delta_{bf}\delta_{de}=\frac{1}{n}[m^\dag]^{cdef}_{ab}$.
\end{ex}

\begin{lem}\label{L.RF}
Let $(e_i)$ be an orthonormal basis of $l^2(X)$. Denote by $(F_i^j)$ the matrix of the $*$-operation on $C(X)$, that is, $e_i^*=\sum_jF_i^je_j$. Then $F_i^j=R^{ij}$. In addition, $F^{-1}=\bar F$.
\end{lem}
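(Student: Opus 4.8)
The plan is to unwind the two ways in which the (scaled) Frobenius structure on $C(X)$ is encoded in the orthonormal basis $(e_i)$ and compare them. Recall that $R^\dag\colon C(X)\otimes C(X)\to\C$ denotes the bilinear form $R^\dag(a\otimes b)=(a,b)=\eta^\dag(ab)$, so its matrix entries in the basis $(e_i)$ are $[R^\dag]_{ij}=(e_i,e_j)$. Since $R\colon\C\to C(X)\otimes C(X)$ is by definition the adjoint of $R^\dag$ with respect to the inner product on $l^2(X)$, writing $R(1)=\sum_{ij}R^{ij}\,e_i\otimes e_j$ and pairing with $e_k\otimes e_l$ gives $\overline{R^{kl}}=\langle R(1),e_k\otimes e_l\rangle=\langle 1,R^\dag(e_k\otimes e_l)\rangle=(e_k,e_l)$, hence $R^{ij}=\overline{(e_i,e_j)}$.

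Next I would relate the $*$-operation to the bilinear form. The defining property of the rescaled inner product is $\langle a,b\rangle=\eta^\dag(a^*b)=(a^*,b)$, and since $*$ is an involution this rearranges to $(a,b)=\langle a^*,b\rangle$. Applying this to the basis vectors and using $e_i^*=\sum_k F_i^k e_k$ together with orthonormality and conjugate-linearity of $\langle\cdot,\cdot\rangle$ in the first argument,
$$(e_i,e_j)=\langle e_i^*,e_j\rangle=\Big\langle\sum_k F_i^k e_k,\,e_j\Big\rangle=\sum_k\overline{F_i^k}\,\delta_{kj}=\overline{F_i^j}.$$
Combining with the previous paragraph, $R^{ij}=\overline{(e_i,e_j)}=\overline{\,\overline{F_i^j}\,}=F_i^j$, which is the first claim; the two conjugations picked up along the way cancel exactly.

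For the second claim I would simply use that $*$ squares to the identity: from $e_i=(e_i^*)^*=\big(\sum_j F_i^j e_j\big)^*=\sum_j\overline{F_i^j}\,e_j^*=\sum_{j,k}\overline{F_i^j}F_j^k\,e_k$ one reads off $\sum_j\overline{F_i^j}F_j^k=\delta_{ik}$, i.e.\ $\bar F F=\id$, so $F^{-1}=\bar F$. There is no genuinely hard step here: the entire content is careful bookkeeping of where complex conjugation enters, the one real subtlety being the interplay between the sesquilinear inner product $\langle\cdot,\cdot\rangle$ and the bilinear form $(\cdot,\cdot)$ encoded by $R^\dag$ — one must check that the conjugation coming from ``$R$ is the adjoint of $R^\dag$'' and the conjugation coming from ``$*$ is antilinear'' compose to the identity rather than to complex conjugation.
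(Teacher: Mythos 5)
Your proof is correct and follows essentially the same route as the paper: the identity $R^{ij}=\overline{[R^\dag]_{ij}}$ combined with $[R^\dag]_{ij}=(e_i,e_j)=\langle e_i^*,e_j\rangle=\overline{F_i^j}$ for the first claim, and involutivity of $*$ for the second. You merely make the two complex conjugations (adjoint of $R^\dag$ versus antilinearity of $*$) explicit where the paper compresses them into one line.
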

\begin{proof}
For the first formula, we compute directly
$$R^{ij}=\overline{[R^\dag]_{ij}}=\overline{\langle e_i^*,e_j\rangle}=\sum_k F_i^k\delta_{kj}=F_i^j.$$

For the second one, we apply the $*$-operation twice. Since $*$ is involutive, we have $e_i=e_i^{**}=\sum_j\bar F_i^je_j^*=\sum_{jk}\bar F_i^jF_j^ke_k$, so $F\bar F=\id$.
\end{proof}

\subsection{Diagrams}

It is often convenient to formulate abstract equations in Frobenius algebras using certain string diagrams. We introduce the following notation

\begin{align*}
    m&=:\Gfork   &   m^\dag&=:\Gmerge\\
 \eta&=:\Gupsing &\eta^\dag&=:\Gsing\\
    R&=m^\dag\eta=
\Graph{
\draw (1.5,0) node{} -- (1.5,0.5);
\draw (1,1) -- (1,0.7) -- (1.5,0.5) node {} -- (2,0.7) -- (2,1);
}
=:
\Guppair
& R^\dag&=\eta^\dag m=
\Graph{
\draw (1.5,1) node{} -- (1.5,0.5);
\draw (1,0) -- (1,0.3) -- (1.5,0.5) node {} -- (2,0.3) -- (2,0);
}=:\Gpair
\end{align*}

As an illustration, we can express the property of the algebra identity $1_C(X)a=a=a1_{C(X)}$. In terms of the mappings $\eta$ and $m$, it can be written as $(\eta\otimes\id)m=\id=(\id\otimes\eta)m$; consequently, we have an analogous condition for the counit and comultiplication $m^\dag(\eta^\dag\otimes\id)=\id=m^\dag(\id\otimes\eta^\dag)$. In terms of diagrams, those conditions read
\begin{equation}\label{eq.unity}
\Graph{
\draw (1.5,0.5) -- (2,0.3) -- (2,0);
\draw (1.5,1) -- (1.5,0.5) node {} -- (1,0.3) -- (1,0) node {};
}=
\Graph{
\draw (1,1)--(1,0);
}
=
\Graph{
\draw (1.5,0.5) -- (2,0.3) -- (2,0)node{};
\draw (1.5,1) -- (1.5,0.5) node {} -- (1,0.3) -- (1,0);
},\qquad
\Graph{
\draw (1.5,0.5) -- (2,0.7) -- (2,1);
\draw (1.5,0) -- (1.5,0.5) node{} -- (1,0.7) -- (1,1) node{};
}=
\Graph{
\draw (1,1)--(1,0);
}
=
\Graph{
\draw (1.5,0.5) -- (2,0.7) -- (2,1) node{};
\draw (1.5,0) -- (1.5,0.5) node{} -- (1,0.7) -- (1,1);
},\qquad
\end{equation}

As another example, let us mention the formula known as the \emph{Frobenius law}. Note that Frobenius algebras are sometimes defined in a more abstract way, where the Frobenius law is an axiom.

\begin{prop}\label{P.Flaw}
Let $A$ be a symmetric Frobenius algebra. Then the following holds:
$$(m\otimes\id)(\id\otimes m^\dag)=m^\dag m=(\id\otimes m)(m^\dag\otimes\id).$$
In the language of diagrams:
\begin{equation}\label{eq.Flaw}
\Graph{
\draw (1,0) -- (1,0.8) -- (1.5,1);
\draw (2.5,1.5) -- (2.5,0.7) -- (2,0.5);
\draw (1.5,1.5) -- (1.5,1) node{} -- (2,0.5) node{} -- (2,0);
}
=
\Graph{
\draw (1.5,0.5) -- (1.5,1);
\draw (1,0) -- (1,0.3) -- (1.5,0.5) node {} -- (2,0.3) -- (2,0);
\draw (1,1.5) -- (1,1.2) -- (1.5,1) node {} -- (2,1.2) -- (2,1.5);
}
=
\Graph{
\draw (2.5,0) -- (2.5,0.8) -- (2,1);
\draw (1,1.5) -- (1,0.7) -- (1.5,0.5);
\draw (2,1.5) -- (2,1) node{} -- (1.5,0.5) node{} -- (1.5,0);
}.
\end{equation}
\end{prop}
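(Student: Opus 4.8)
The plan is to put the comultiplication $m^\dag$ into a ``canonical form'' in terms of the multiplication $m$ and the copairing $R$, and then read off the Frobenius law by a formal diagram chase. The first step is the auxiliary identity
\[
m^\dag=(m\otimes\id)(\id\otimes R)=(\id\otimes m)(R\otimes\id),
\]
which expresses the splitting vertex $m^\dag$ as a multiplication vertex with one input leg bent into a cup $R$; for a symmetric Frobenius algebra $R$ is invariant under the tensor flip, so the two right-hand sides are mirror images and it is really one statement. Granting it, the Frobenius law drops out by sliding the cup past the multiplications: from $m^\dag=(m\otimes\id)(\id\otimes R)$,
\[
m^\dag m=(m\otimes\id)(m\otimes\id\otimes\id)(\id\otimes\id\otimes R)=(m\otimes\id)(\id\otimes m\otimes\id)(\id\otimes\id\otimes R)=(m\otimes\id)(\id\otimes m^\dag),
\]
where the first equality uses $(\id\otimes R)m=(m\otimes\id\otimes\id)(\id\otimes\id\otimes R)$ (both send $a\otimes b\mapsto ab\otimes R$), the second uses associativity of $m$, and the third re-reads the result as $\id\otimes m^\dag$; the equality $m^\dag m=(\id\otimes m)(m^\dag\otimes\id)$ follows by the mirror computation from the other form of the auxiliary identity.

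So the real content is the auxiliary identity, and I would prove it by passing to adjoints: $m^\dag=(\id\otimes m)(R\otimes\id)$ is equivalent to $m=(R^\dag\otimes\id)(\id\otimes m^\dag)$, and since $R^\dag$ is a non-degenerate bilinear form it suffices to check this after pairing the outputs with an arbitrary element $c$ via $R^\dag$. Writing $m^\dag(b)=\sum b_{(1)}\otimes b_{(2)}$ in Sweedler notation, the left-hand side then reads $\sum\eta^\dag(ab_{(1)})\,\eta^\dag(cb_{(2)})$, and using $\eta^\dag(xy)=\langle x^*,y\rangle$ this equals $\sum\langle a^*,b_{(1)}\rangle\langle c^*,b_{(2)}\rangle=\langle a^*\otimes c^*,m^\dag(b)\rangle=\langle a^*c^*,b\rangle=\eta^\dag\bigl((a^*c^*)^*b\bigr)=\eta^\dag(cab)=R^\dag(c\otimes ab)$, as wanted; the only ingredients are the defining adjoint property of $m^\dag$, the relation between the inner product and $\eta^\dag$ recorded in Lemma~\ref{L.RF}, and associativity. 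The mirror identity $m^\dag=(m\otimes\id)(\id\otimes R)$ is handled in the same way, and this is the one place where the hypothesis that $\psi$ be tracial is used, via $\eta^\dag(xy)=\eta^\dag(yx)$ (equivalently, via flip-invariance of $R$).

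I do not expect a real obstacle here: the statement is formal and the computation short. The only thing demanding care is bookkeeping — tracking which tensor factor each morphism acts on while dragging the cup, and the placement of the $*$-operation in the scalar identity above. It is also worth isolating explicitly the single use of the symmetric (tracial) hypothesis, namely to make the two halves of the statement mirror one another; with a little more effort each half can alternatively be verified by a direct computation, e.g.\ comparing $\langle(m\otimes\id)(\id\otimes m^\dag)(a\otimes b),x\otimes y\rangle$ with $\langle m^\dag m(a\otimes b),x\otimes y\rangle$ and observing that both collapse to $\eta^\dag(b^*a^*xy)$, using that left multiplication on $l^2(X)$ is a $*$-representation.
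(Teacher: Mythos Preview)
Your argument is correct. The paper itself does not give a proof of this proposition at all; it simply cites \cite[Lemma~3.17]{Vic11}. So you are supplying strictly more than the paper does.

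One structural remark worth making: the auxiliary identity you prove first, $m^\dag=(m\otimes\id)(\id\otimes R)=(\id\otimes m)(R\otimes\id)$, is exactly what the paper records afterwards as Corollary~\ref{C.Flaw}(2a), and there it is deduced \emph{from} the Frobenius law. You run the logic in the opposite direction, establishing Corollary~\ref{C.Flaw}(2a) directly from the definition of $m^\dag$ as the Hilbert-space adjoint of $m$, and then obtaining Proposition~\ref{P.Flaw} from it by an associativity slide. Both orders are standard and legitimate; your order has the minor advantage of making visible precisely where the tracial hypothesis enters (namely in passing from one half of the auxiliary identity to its mirror, as you note).

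A small bookkeeping point: the identity $\eta^\dag(xy)=\langle x^*,y\rangle$ that you invoke is not really the content of Lemma~\ref{L.RF} (which computes the matrix of the $*$-operation); it is immediate from the definition $\langle a,b\rangle=\eta^\dag(a^*b)$. You may want to adjust that reference.
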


See e.g.~\cite[Lemma~3.17]{Vic11} for a proof.

\begin{cor}\label{C.Flaw}
We have
\begin{enumerate}
\item $(R^\dag\otimes\id)(\id\otimes R)=\id=(\id\otimes R^\dag)(R\otimes\id)$, i.e.
$
\Graph{\draw (1,0) -- (1,0.8) -- (1.5,1) node{} -- (2,0.8) -- (2,0.2) -- (2.5,0) node{} -- (3,0.2) -- (3,1);}
=
\Graph{\draw (1,1)--(1,0);}
=
\Graph{\draw (3,0) -- (3,0.8) -- (2.5,1) node{} -- (2,0.8) -- (2,0.2) -- (1.5,0) node{} -- (1,0.2) -- (1,1);},
$
\item[(2a)] $(\id\otimes m)(R\otimes\id)=m^\dag=(m\otimes\id)(\id\otimes R)$, i.e.\ 
$
\Graph{\draw(1.5,1)--(1.5,1.5);\draw (1,0) -- (1,0.8) -- (1.5,1) node{} -- (2,0.8) -- (2,0.2) -- (2.5,0) node{} -- (3,0.2) -- (3,1.5);}
=
\Gmerge
=
\Graph{\draw(2.5,1)--(2.5,1.5);\draw (3,0) -- (3,0.8) -- (2.5,1) node{} -- (2,0.8) -- (2,0.2) -- (1.5,0) node{} -- (1,0.2) -- (1,1.5);},
$
\item[(2b)] $(\id\otimes m^\dag)(R^\dag\otimes\id)=m=(m^\dag\otimes\id)(\id\otimes R^\dag)$, i.e.\ 
$
\Graph{\draw(1.5,0)--(1.5,-0.5);\draw (3,-0.5) -- (3,0.8) -- (2.5,1) node{} -- (2,0.8) -- (2,0.2) -- (1.5,0) node{} -- (1,0.2) -- (1,1);},
=
\Gfork
=
\Graph{\draw(2.5,0)--(2.5,-0.5);\draw (1,-0.5) -- (1,0.8) -- (1.5,1) node{} -- (2,0.8) -- (2,0.2) -- (2.5,0) node{} -- (3,0.2) -- (3,1);}
$
\end{enumerate}
\end{cor}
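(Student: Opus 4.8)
The plan is to obtain all three items as short diagrammatic consequences of the Frobenius law (Proposition~\ref{P.Flaw}), the (co)unit relations~\eqref{eq.unity}, and the definitions $R=\eta m^\dag$, $R^\dag=m\eta^\dag$; no new analytic ingredient is needed, and the whole proof is a matter of moving caps and cups past (co)multiplication vertices.

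I would treat (2a) first. Expanding $R=\eta m^\dag$, the composite $(\id\otimes m)(R\otimes\id)$ becomes ``plug in the unit, comultiply it, then multiply one of the two resulting legs back against the through-strand''. The part of this built only from $m$ and $m^\dag$ is precisely one of the two outer expressions in the Frobenius law~\eqref{eq.Flaw}, so it may be replaced by the middle expression $m^\dag m$; what is then left is $m^\dag$ preceded by ``multiply the unit against the input'', which is the identity by~\eqref{eq.unity}. Hence the composite equals $m^\dag$, and the second equality in (2a) is identical using the mirror form of~\eqref{eq.Flaw}.

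For (2b) the quickest route is to take the $\dag$-adjoint of (2a): since $m^{\dag\dag}=m$, $R^{\dag\dag}=R$, $\id^\dag=\id$, and taking adjoints reverses the order of composition (and swaps the two tensor legs), the two equalities of (2a) transpose to exactly the two equalities of (2b). Alternatively one simply reruns the argument of (2a) with $R^\dag=m\eta^\dag$, the Frobenius law, and the counit part of~\eqref{eq.unity}. Then (1) follows by feeding (2a) into the counit relation: writing $R^\dag=m\eta^\dag$ in $(R^\dag\otimes\id)(\id\otimes R)$ exhibits the composite of (2a) (producing $m^\dag$ on the relevant legs) followed by $\eta^\dag$ applied to one leg, which collapses to the identity by~\eqref{eq.unity}; the second snake identity is the mirror statement. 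Equivalently, substitute the formula for $m^\dag$ from (2a) into the formula for $m$ from (2b) and cancel the now-adjacent $m$ and $m^\dag$.

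The computations are entirely routine; the only things that require care are the bookkeeping of which tensor legs interact and the implicit identifications $\C\otimes C(X)\cong C(X)\cong C(X)\otimes\C$. The one conceptual point worth flagging is that (1) genuinely uses the (co)unit relations in addition to the Frobenius law — the Frobenius law alone does not yield the snake identities — so the ``main obstacle'', such as it is, is simply to make sure that step invokes~\eqref{eq.unity} and not only~\eqref{eq.Flaw}.
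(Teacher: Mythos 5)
Your proposal is correct and follows essentially the same route as the paper: expand $R$ and $R^\dag$ via $\eta$, $\eta^\dag$ and $m$, $m^\dag$, apply the Frobenius law \eqref{eq.Flaw}, collapse with the (co)unit relations \eqref{eq.unity}, and obtain (2b) from (2a) by taking adjoints. The only cosmetic difference is that the paper proves item (1) directly by the same sequence of rewrites, whereas you deduce it by composing (2a) with the counit relation; the underlying diagram manipulations are identical.
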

\begin{proof}
Item (1):
$$
\Graph{\draw (1,0) -- (1,0.8) -- (1.5,1) node{} -- (2,0.8) -- (2,0.2) -- (2.5,0) node{} -- (3,0.2) -- (3,1);}
=
\Graph{\draw (1.5,1)--(1.5,1.5) node{};\draw (2.5,0)--(2.5,-0.5) node{};\draw (1,0) -- (1,0.8) -- (1.5,1) node{} -- (2,0.8) -- (2,0.2) -- (2.5,0) node{} -- (3,0.2) -- (3,1);}
\buildrel\eqref{eq.Flaw}\over=
\Graph{
\draw (1.5,0.2) -- (1.5,0.8);
\draw (1,-0.5) -- (1,0) -- (1.5,0.2) node {} -- (2,0) -- (2,-0.5) node{};
\draw (1,1.5) node{} -- (1,1) -- (1.5,0.8) node {} -- (2,1) -- (2,1.5);
}
\buildrel\eqref{eq.unity}\over=
\Graph{\draw(1,1.5) -- (1,-0.5);}
\buildrel\eqref{eq.unity}\over=
\Graph{
\draw (1.5,0.2) -- (1.5,0.8);
\draw (1,-0.5) node{} -- (1,0) -- (1.5,0.2) node {} -- (2,0) -- (2,-0.5);
\draw (1,1.5) -- (1,1) -- (1.5,0.8) node {} -- (2,1) -- (2,1.5) node{};
}
\buildrel\eqref{eq.Flaw}\over=
\Graph{\draw (2.5,1)--(2.5,1.5) node{};\draw (1.5,0)--(1.5,-0.5) node{};\draw (3,0) -- (3,0.8) -- (2.5,1) node{} -- (2,0.8) -- (2,0.2) -- (1.5,0) node{} -- (1,0.2) -- (1,1);},
=
\Graph{\draw (3,0) -- (3,0.8) -- (2.5,1) node{} -- (2,0.8) -- (2,0.2) -- (1.5,0) node{} -- (1,0.2) -- (1,1);}
$$
Item (2a):
$$
\Graph{\draw(1.5,1)--(1.5,1.5);\draw (1,0) -- (1,0.8) -- (1.5,1) node{} -- (2,0.8) -- (2,0.2) -- (2.5,0) node{} -- (3,0.2) -- (3,1.5);}
=
\Graph{\draw (1.5,1)--(1.5,1.5);\draw (2.5,0)--(2.5,-0.5) node{};\draw (1,0) -- (1,0.8) -- (1.5,1) node{} -- (2,0.8) -- (2,0.2) -- (2.5,0) node{} -- (3,0.2) -- (3,1.5);}
\buildrel\eqref{eq.Flaw}\over=
\Graph{
\draw (1.5,0.2) -- (1.5,0.8);
\draw (1,-0.5) -- (1,0) -- (1.5,0.2) node {} -- (2,0) -- (2,-0.5) node{};
\draw (1,1.5) -- (1,1) -- (1.5,0.8) node {} -- (2,1) -- (2,1.5);
}
\buildrel\eqref{eq.unity}\over=
\Gmerge
\buildrel\eqref{eq.unity}\over=
\Graph{
\draw (1.5,0.2) -- (1.5,0.8);
\draw (1,-0.5) node{} -- (1,0) -- (1.5,0.2) node {} -- (2,0) -- (2,-0.5);
\draw (1,1.5) -- (1,1) -- (1.5,0.8) node {} -- (2,1) -- (2,1.5);
}
\buildrel\eqref{eq.Flaw}\over=
\Graph{\draw (2.5,1)--(2.5,1.5);\draw (1.5,0)--(1.5,-0.5) node{};\draw (3,0) -- (3,0.8) -- (2.5,1) node{} -- (2,0.8) -- (2,0.2) -- (1.5,0) node{} -- (1,0.2) -- (1,1.5);},
=
\Graph{\draw(2.5,1)--(2.5,1.5);\draw (3,0) -- (3,0.8) -- (2.5,1) node{} -- (2,0.8) -- (2,0.2) -- (1.5,0) node{} -- (1,0.2) -- (1,1.5);}
$$

Finally, item (2b) is obtained just by applying $\dag$ to (2a).
\end{proof}

We can characterize $*$-homomorphisms in a diagrammatic way (compare e.g.\ with \cite[Def.~2.3]{MRV19}):

\begin{prop}\label{P.homo}
Let $A,B$ be Frobenius algebras. Consider a linear map $f\colon A\to B$. Then
\begin{enumerate}
\item $f$ is multiplicative if and only if
$\displaystyle
\Graph{
\draw (1,0) -- (1.5,-0.2) -- (1.5,-0.5);
\draw (1,1.5) -- (1,0.8) node[rectangle]{$\scriptstyle f$} -- (1,0) node{} -- (0.5,-0.2) -- (0.5,-0.5);
}
=
\Graph{
\draw (1,-.5) -- (1,0.3) node[rectangle]{$\scriptstyle f$} -- (1,1) -- (1.5,1.2);
\draw (2,-.5) -- (2,0.3) node[rectangle]{$\scriptstyle f$} -- (2,1) -- (1.5,1.2) node {} -- (1.5,1.5);
}
$
\item $f$ is unital if and only if
$\Graph{\draw (1,1.3) -- (1,0.5) node[rectangle]{$\scriptstyle f$} -- (1,-0.3) node{};}
=\Graph{\draw (1,1) -- (1,0) node{};}$
\item $f$ is $*$-preserving if and only if
$\Graph{\draw (1,1.5) -- (1,0.5) node[rectangle]{$\scriptstyle f^\dag$} -- (1,-0.5);}
=\Graph{\draw (1,1.5) -- (1,-0.3) -- (1.5,-0.5) node{} -- (2,-0.3) -- (2,0.5) node[rectangle] {$\scriptstyle f$} -- (2,1.3) -- (2.5,1.5) node{} -- (3,1.3) -- (3,-0.5);}$
\end{enumerate}
\end{prop}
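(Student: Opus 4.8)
The plan is to observe that items~(1) and~(2) are just the algebraic definitions rewritten as string diagrams, and to put the work into item~(3). Item~(1) is the equation $f\circ m_A=m_B\circ(f\otimes f)$, i.e.\ $f(ab)=f(a)f(b)$, and item~(2) is $f\circ\eta_A=\eta_B$, i.e.\ $f(1_A)=1_B$; read bottom-to-top these are precisely the two displayed diagrams, so there is nothing to prove. For item~(3) the first step is to read off the right-hand diagram: composing from bottom to top it is the linear map
\[
g=(\id_A\otimes R_B^\dag)\,(\id_A\otimes f\otimes\id_B)\,(R_A\otimes\id_B),
\]
read as the composition $B\to A\otimes A\otimes B\to A\otimes B\otimes B\to A$ that bends the two legs of $f$ around with the cup $R_A$ and the cap $R_B^\dag$. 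I would prove item~(3) by first showing that $g$ is the transpose of $f$ with respect to the Frobenius bilinear forms, and then that this transpose coincides with the Hilbert-space adjoint $f^\dag$ exactly when $f$ is $*$-preserving.

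For the first point I would verify that
\[
R_A^\dag\bigl(a\otimes g(b)\bigr)=R_B^\dag\bigl(f(a)\otimes b\bigr)\qquad\text{for all }a\in A,\ b\in B.
\]
After substituting the definition of $g$ this follows from the ``zigzag'' identity $(R_A^\dag\otimes\id_A)(\id_A\otimes R_A)=\id_A$ of Corollary~\ref{C.Flaw}(1): it straightens the wire that carries $a$ into $f$, leaving exactly $R_B^\dag(f(a)\otimes b)$. (In pictures: one simply pulls the $a$-strand taut through $f$.)

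For the second point, recall the relation $\langle x,y\rangle=\eta^\dag(x^*y)=R^\dag(x^*\otimes y)$ linking the inner product, the involution and the bilinear form, on $A$ and on $B$ alike. Hence, by the definition of the adjoint $f^\dag$, for $a\in A$ and $b\in B$,
\[
R_A^\dag\bigl(a^*\otimes f^\dag(b)\bigr)=\langle a,f^\dag(b)\rangle_A=\langle f(a),b\rangle_B=R_B^\dag\bigl(f(a)^*\otimes b\bigr),
\]
while the identity of the previous step, applied with $a^*$ in place of $a$, reads $R_B^\dag(f(a^*)\otimes b)=R_A^\dag(a^*\otimes g(b))$. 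If $f$ is $*$-preserving, then $f(a)^*=f(a^*)$ makes the two right-hand sides equal, so $R_A^\dag(a^*\otimes f^\dag(b))=R_A^\dag(a^*\otimes g(b))$ for all $a\in A$; as the involution is a bijection of $A$ and $R_A^\dag$ is non-degenerate, $f^\dag(b)=g(b)$, and $b$ being arbitrary gives $f^\dag=g$, which is the claimed diagrammatic identity. Conversely, if $f^\dag=g$, the same chain run backwards gives $R_B^\dag(f(a)^*\otimes b)=R_B^\dag(f(a^*)\otimes b)$ for all $b\in B$, and non-degeneracy of $R_B^\dag$ forces $f(a)^*=f(a^*)$, i.e.\ $f$ is $*$-preserving.

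The only step demanding real care is the first -- correctly matching the bent diagram with the bilinear-form transpose and invoking the right zigzag from Corollary~\ref{C.Flaw}; the remainder is bookkeeping with the three pairings $R^\dag$, $\langle\cdot,\cdot\rangle$ and the involution $*$ that the paper is at pains to keep separate. An alternative route skips the diagrams entirely: fix orthonormal bases $(e_i)$ of $l^2(A)$ and $(\tilde e_\alpha)$ of $l^2(B)$, use Lemma~\ref{L.RF} to replace $R$ by the matrix $F$ of the involution -- which is unitary and symmetric, since $*$ is antiunitary and $F^{-1}=\bar F$ -- and then a direct computation of matrix entries shows that the two sides of~(3) coincide if and only if $f(e_i^*)=f(e_i)^*$ for every $i$.
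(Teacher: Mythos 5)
Your proposal is correct and follows essentially the same route as the paper: items (1) and (2) are read off directly, and for item (3) you bend the diagram with the zigzag identity of Corollary~\ref{C.Flaw}(1) to compare $R^\dag(a\otimes f^\dag(b))=\langle f(a^*),b\rangle$ with $R^\dag(f(a)\otimes b)=\langle f(a)^*,b\rangle$, reducing the claim to $f(a^*)=f(a)^*$. Your version merely spells out the non-degeneracy step that the paper leaves implicit.
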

\begin{proof}
The first two items are completely straightforward. For instance, $f$ is multiplicative if $f(ab)=f(a)f(b)$, which can also be written as $f\circ m=m\circ(f\otimes f)$, which is exactly the meaning of the stated diagram. Similarly, $f$ is unital if and only if $f\circ\eta=\eta$.

The last item is slightly more involved. First, using Corollary~\ref{C.Flaw}(1), we see that the diagrammatic equation is equivalent to
$\Graph{\draw (1,-0.5) -- (1,1.3) -- (1.5,1.5) node{} -- (2,1.3) -- (2,0.5) node[rectangle]{$\scriptstyle f^\dag$} -- (2,-0.5);}
=\Graph{\draw (1,-0.5) -- (1,0.5) node[rectangle] {$\scriptstyle f$} -- (1,1.3) -- (1.5,1.5) node{} -- (2,1.3) -- (2,-0.5);}$. We can now study both sides of the equation applied on some elements $a\in A$, $b\in B$:
\begin{align*}
R^\dag(a\otimes f^\dag(b))&=\langle a^*,f^\dag(b)\rangle=\langle f(a^*),b\rangle\\
R^\dag(f(a)\otimes b)&=\langle f(a)^*,b\rangle
\end{align*}

Those are equal for every $a\in A$, $b\in B$ if and only if $f(a)^*=f(a^*)$, that is, $f$ is $*$-preserving.
\end{proof}

\subsection{Quantum graphs}
Now, we are ready for the definition of a quantum graph. Recall that a classical graph is determined by a finite set of vertices $X$ and a set of edges $E\subset X\times X$. There are several equivalent approaches for defining quantum graphs. The origins of quantum graphs go back to the work of Weaver \cite{Wea12,Wea21}. Later in \cite{MRV18}, an equivalent definition was formulated. We believe that the most intuitive definition of a quantum graph is the following one coming from \cite[Section~VII]{MRV18}, which directly generalizes the original definition of a graph.

\begin{defn}\label{D.qgraph}
A quantum graph is a pair $(X,\tilde A)$ of a finite quantum set $X$ and an orthogonal projection $\tilde A\in C(X)\otimes C(X)^{\rm op}$, $\tilde A^2=\tilde A$, $\tilde A^*=\tilde A$ called the \emph{edge projection}. The quantum graph
\begin{enumerate}
\item is \emph{undirected} if $\tilde A=T_{\crosspart}(\tilde A)$, where $T_{\crosspart}$ is the swap map $x\otimes y\mapsto y\otimes x$,
\item \emph{has no loops} if $\tilde A\tilde I=0$, where $\tilde I$ equals to $R$ taken as an element of $C(X)\otimes C(X)^{\rm op}$,
\item[(2*)] \emph{has a loop at every vertex} if $\tilde A\tilde I=\tilde I$.
\end{enumerate}
If conditions (1) and (2) are satisfied, we say that $X$ is a \emph{simple} quantum graph.
\end{defn}

\begin{rem}\label{R.op}
By $C(X)^{\rm op}$, we denote the \emph{opposite algebra}. That is, an algebra with the same underlying vector space as $C(X)$, but with opposite multiplication. It is well known that actually $C(X)^{\rm op}\simeq C(X)$ for any C*-algebra $C(X)$. For instance, if $C(X)$ is realized through matrices as $C(X)=\bigoplus_i M_{n_i}(\C)$, then the isomorphism $C(X)\to C(X)^{\rm op}$ can be realized as the transposition. So, if we have such a concrete realization of $C(X)$, we can use $\tilde A\in C(X)\otimes C(X)$.
\end{rem}

\begin{rem}
For a classical graph with a vertex set $X_N=\{1,\dots,N\}$, we take $\tilde A\in C(X_N)\otimes C(X_N)=C(X_N\times X_N)$ the indicator function of the set of edges, i.e.\ $\tilde A(v,w)=1$ if $(v,w)$ is an edge, $\tilde A(v,w)=0$ otherwise. 

A classical graph is undirected if $\tilde A(v,w)=\tilde A(w,v)$, which exactly corresponds to the condition (1). We have $R^{vw}=\delta_{vw}$, so the function $\tilde I$ is defined by $\tilde I(v,w)=\delta_{vw}$. Therefore, condition (2) means $\tilde A(v,w)\delta_{vw}=\tilde A(v,v)=0$ for every $v$, which exactly corresponds to the fact that the graph has no loops. In contrast, condition (2*) says $\tilde A(v,w)\delta_{vw}=\tilde A(v,v)=1$, which means that the graph has a loop at every vertex $v$.
\end{rem}

\begin{rem}
The notion \emph{graph} often means a simple graph. Similarly in the theory of quantum graphs, one often restricts only to the simple graphs. Nevertheless, in the current literature \cite{Wea12,Wea21,MRV18,MRV19,BCE+20,BEVW20}, the authors assume (2*) instead of (2). However, similarly to the classical case, this change has typically little practical impact.

To be more concrete, in Example~\ref{E.full}, we are going to show that $\tilde I$ is also an orthogonal projection and hence it follows that there is a one-to-one correspondence between graphs with no loops and graphs with a loop at every vertex: we simply have to add or subtract $\tilde I$ from $\tilde A$.
\end{rem}

\subsection{Adjacency matrix}

\begin{defn}
For any element $\tilde A\in C(X)\otimes C(X)^{\rm op}$, we define a linear operator $A\colon l^2(X)\to l^2(X)$ by $A=(\id\otimes R^\dag)(\tilde A\otimes\id)$. Conversely, for any $A\colon l^2(X)\to l^2(X)$, we define $\tilde A\in C(X)\otimes C(X)^{\rm op}$ by $\tilde A=(A\otimes\id)R$. Pictorially,
$$A=
\Graph{
\draw (1,1.5) -- (1,0.5) -- (1.5,0.3) node[rectangle]{$\scriptstyle\;\tilde A\;$} -- (2,0.5) -- (2,1) -- (2.5,1.2) node{} -- (3,1) -- (3,0);
}
,\qquad
\tilde A=
\Graph{
\draw (1,1.5) -- (1,0.5) node[rectangle]{$\scriptstyle A$} -- (1,0) -- (1.5,-0.2) node{} -- (2,0) -- (2,1.5);
}.
$$
From Corollary~\ref{C.Flaw}, it follows that those operations are inverse to each other. We will refer to them as the \emph{rotation}. We will say that $A$ is a \emph{rotated version} of $\tilde A$ and vice versa.
\end{defn}

\begin{defn}
Let $(X,\tilde A)$ be a quantum graph. The rotation $A=(\tilde A\otimes \id)R$ will be called the \emph{adjacency matrix} of the quantum graph $(X,\tilde A)$.
\end{defn}

\begin{ex}
Let $(X,\tilde A)$ be a classical graph. Then $R_{ij}=\delta_{ij}$, so
$$A^i_j=\sum_k \tilde A^{ik}R_{kj}=\tilde A^{ij}=\begin{cases}1&\text{if $i$ is adjacent to $j$,}\\0&\text{otherwise.}\end{cases}$$
\end{ex}

\begin{ex}\label{E.MnAdj}
In the case $X=M_n$, it is convenient to fix the isomorphism $t\colon C(X)\to C(X)^{\rm op}$ as the transposition and consider $\tilde A\in C(X)\otimes C(X)$ (by transposing the second leg). Recall that $R_{abcd}=n\delta_{ad}\delta_{bc}$. Using this convention, we can write $A=(\id\otimes(R^\dag t))\tilde A$, so
\begin{equation}\label{eq.rot}
A^{ij}_{kl}=\sum_{a,b}\tilde A^{ijba}R_{abkl}=n\tilde A^{ijkl}.
\end{equation}
\end{ex}

\begin{defn}
The rotation operation $C(X)\otimes C(X)^{\rm op}\to \Lin(l^2(X))$ induces the structure of a C*-algebra on $\Lin(l^2(X))$ -- the vector space of linear operators $l^2(X)\to l^2(X)$. For given $A,B\colon l^2(X)\to l^2(X)$, we will denote by $A\bullet B$ the corresponding product and call it the \emph{Schur product} of $A$ and $B$. The corresponding involution will be denoted by $A^*$.
\end{defn}

\begin{prop}\label{P.Schur}
It holds that
\begin{align*}
A\bullet B&=m(A\otimes B)m^\dag=
\Graph{
\draw (1,-1)--(1,-0.5)--(0,0)--(0,0.5) node[rectangle]{$\scriptstyle A$}--(0,1)--(1,1.5)--(1,2);
\draw (1,-0.5) node {}--(2,0)--(2,0.5) node[rectangle]{$\scriptstyle B$}--(2,1)--(1,1.5) node{};
}\\
A^*&=(R^\dag\otimes\id)(\id\otimes A^\dag\otimes\id)(\id\otimes R)=
\Graph{\draw (1,-0.5) -- (1,1.3) -- (1.5,1.5) node{} -- (2,1.3) -- (2,0.5) node[rectangle] {$\scriptstyle A^\dag$} -- (2,-0.3) -- (2.5,-0.5) node{} -- (3,-0.3) -- (3,1.5);}
\end{align*}
\end{prop}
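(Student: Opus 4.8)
The plan is to unwind the definitions of the transported structure: $A\bullet B$ is by construction the rotation of the product $\tilde A\cdot\tilde B$ taken in $C(X)\otimes C(X)^{\rm op}$, and $A^*$ is the rotation of the involution $\tilde A^*$ of $\tilde A$ in that same algebra. I will use throughout that the rotation and its inverse are $A=(\id\otimes R^\dag)(\tilde A\otimes\id)$ and $\tilde A=(A\otimes\id)R$, that $[R^\dag]_{ij}=\overline{R^{ij}}$ in an orthonormal basis, that $R^{ij}=R^{ji}$ since the Frobenius algebra is symmetric, and that $\overline R=R^{-1}$ by Lemma~\ref{L.RF}.

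For the product formula I would write $\tilde A=(A\otimes\id)R$ and $\tilde B=(B\otimes\id)R$ and recall that the multiplication of $C(X)\otimes C(X)^{\rm op}$ sends $(a_1\otimes b_1)\otimes(a_2\otimes b_2)$ to $a_1a_2\otimes b_2b_1$. Thus $\tilde A\cdot\tilde B$ is the diagram in which the two legs carrying $A$ and $B$ are joined by one $m$ and the other two legs (each running directly out of a copy of $R$) are joined by a second $m$ in the \emph{reversed} order; rotating back means capping that second $m$ together with a fresh input strand by $R^\dag$. I would then simplify with Corollary~\ref{C.Flaw}: associativity of $m$ lets the ``bottom'' $m$ slide through the $R^\dag$, Corollary~\ref{C.Flaw}(1) annihilates the resulting $R$--$R^\dag$ pair, and Corollary~\ref{C.Flaw}(2a) converts the last surviving $R$ together with an $m$ into a single $m^\dag$; what is left is $m^\dag$ feeding its outputs through $A\otimes B$ and recombining them by $m$, i.e.\ $m(A\otimes B)m^\dag$. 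If the diagram chase gets unwieldy, the same identity can be verified on coefficients: writing $(\tilde A)^{pq}=\sum_aA^p_aR^{aq}$ (and similarly for $\tilde B$) one gets $(A\bullet B)^i_l=\sum A^p_aR^{aq}B^r_bR^{bs}m^i_{pr}m^j_{sq}\overline{R^{jl}}$, and then reassociating $\sum_jm^j_{sq}\overline{R^{jl}}=\sum_mm^m_{ql}\overline{R^{sm}}$, collapsing $\sum_sR^{bs}\overline{R^{sm}}=\delta_{bm}$ (Lemma~\ref{L.RF}), and recognising $\sum_qR^{aq}m^m_{ql}=[m^\dag]^{am}_l$ (Corollary~\ref{C.Flaw}(2a)) leaves $\sum m^i_{pr}A^p_aB^r_m[m^\dag]^{am}_l=[m(A\otimes B)m^\dag]^i_l$.

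For the involution formula I would first note that $C(X)^{\rm op}$ carries the same $*$ as $C(X)$, so the involution of $C(X)\otimes C(X)^{\rm op}$ is obtained by applying $*$ in each tensor leg; since this is conjugate-linear, a clean diagrammatic argument is awkward and I would compute on coefficients in an orthonormal basis $(e_i)$. By Lemma~\ref{L.RF}, $e_i^*=\sum_jR^{ij}e_j$, hence $(\tilde A^*)^{kl}=\sum_{ij}\overline{\tilde A^{ij}}R^{ik}R^{jl}$. Substituting $\tilde A^{ij}=\sum_cA^i_cR^{cj}$ and rotating (contracting the second leg with $[R^\dag]=\overline R$) yields $(A^*)^a_b=\sum\overline{A^i_c}\,\overline{R^{cj}}R^{ia}R^{jl}\overline{R^{lb}}$; using $\overline R=R^{-1}$ to collapse $\sum_j\overline{R^{cj}}R^{jl}=\delta_{cl}$ and the symmetry $R^{ia}=R^{ai}$ rewrites this as $\sum_{i,c}R^{ai}[A^\dag]^c_i\overline{R^{cb}}$, which is precisely the coefficient of $(\id\otimes R^\dag)(\id\otimes A^\dag\otimes\id)(R\otimes\id)$.

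I expect the main obstacle to be purely organisational: in the product formula, tracking which of the four legs is which and respecting the order reversal imposed by $C(X)^{\rm op}$ when recombining them; in the involution formula, the conjugate-linearity of $*$, which is what pushes the argument from diagrams to coefficients (equivalently, it forces one to move the two copies of the antilinear map $a\mapsto a^*$ on $l^2(X)$ carefully past $R$ and $R^\dag$).
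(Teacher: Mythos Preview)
Your argument is correct and follows essentially the same route as the paper: a diagrammatic/Frobenius-law manipulation for the product (backed up by a coefficient computation using Corollary~\ref{C.Flaw} and Lemma~\ref{L.RF}) and a coordinate computation for the involution exploiting $R^{ij}=R^{ji}$ and $\bar R=R^{-1}$. The only cosmetic difference is that you verify the identities on the operator side (computing $(A\bullet B)^i_l$ and $(A^*)^a_b$ directly), whereas the paper verifies the rotated identities $((A\bullet B)\otimes\id)R=\tilde A\cdot\tilde B$ and $(A^*\otimes\id)R=(\tilde A)^*$; since rotation is a bijection this is immaterial.
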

\begin{proof}
For the purpose of this proof, denote by $\bullet$ and $*$ the operations given by the equations above. We want to prove that those are exactly the operations induced by the C*-structure on $C(X)\otimes C(X)^{\rm op}$.

That is, we want to prove that
$$((A\bullet B)\otimes\id)R=((A\otimes\id)R)\cdot((B\otimes\id)R),\qquad (A^*\otimes\id)R=((A\otimes\id)R)^*,$$
where the operations $\cdot$ and $*$ on the right hand side are the C*-operations in $C(X)\otimes C(X)^{\rm op}$. The first equation is easy to prove using diagrams. We want to prove that
$$
\Graph{
\draw (3,2)--(3,-0.8)--(2,-1) node{}--(1,-0.8)--(1,-0.5)--(0,0)--(0,0.5) node[rectangle]{$\scriptstyle A$}--(0,1)--(1,1.5)--(1,2);
\draw (1,-0.5) node {}--(2,0)--(2,0.5) node[rectangle]{$\scriptstyle B$}--(2,1)--(1,1.5) node{};
}
=
\Graph{
\draw (2,1.5) -- (1,1) -- (1,0.5) node[rectangle] {$\scriptstyle A$} -- (1,-0.3) -- (1.5,-0.5) node{} -- (2,-0.3) -- (2,1) -- (4,1.3) -- (3.5,1.5);
\draw (2,2) -- (2,1.5) node{} --(3,1) -- (3,0.5) node[rectangle] {$\scriptstyle B$} -- (3,-0.3) -- (3.5,-0.5) node{} -- (4,-0.3) -- (4,1) -- (3,1.3) -- (3.5,1.5) node {} -- (3.5,2);
}.
$$
Note the crossing under one of the multiplications $\Gfork$, which depicts the fact that we use the opposite multiplication in the right factor.

It is easy to see that the right hand side actually equals to 
$
\Graph{
\draw (2,1.5) -- (1,1) -- (1,0.5) node[rectangle] {$\scriptstyle A$} -- (1,-0.3) -- (3,-0.8) node{} -- (5,-0.3) -- (5,1.3) -- (4.5,1.5);
\draw (2,2) -- (2,1.5) node{} --(3,1) -- (3,0.5) node[rectangle] {$\scriptstyle B$} -- (3,0) -- (3.5,-0.2) node{} -- (4,0) -- (4,1.3) -- (4.5,1.5) node {} -- (4.5,2);
}$.
Now, we can use Corollary~\ref{C.Flaw} to see that 
this indeed equals to
\Graph{
\draw (3,2)--(3,-0.8)--(2,-1) node{}--(1,-0.8)--(1,-0.5)--(0,0)--(0,0.5) node[rectangle]{$\scriptstyle A$}--(0,1)--(1,1.5)--(1,2);
\draw (1,-0.5) node {}--(2,0)--(2,0.5) node[rectangle]{$\scriptstyle B$}--(2,1)--(1,1.5) node{};
}.

The second equation is not as easy to prove using diagrams so let us do it in entries. Take some orthonormal basis $(e_i)$ of $l^2(X)$. Recall from Lemma~\ref{L.RF} that if we denote $e_i^*=\sum_jF_i^je_j$, then $F_i^j=R^{ij}$. Recall also Corollary~\ref{C.Flaw}, where the first equation can be written as $\sum_k R^\dag_{ik}R^{kj}=\delta_{ij}=\sum_k R^{ik}R^\dag_{kj}$. Finally, recall that we are working over symmetric Frobenius algebras, so $R^{ij}=R^{ji}$. Now, we can write
\begin{align*}
[(A^*\otimes\id)R]^{ij}&=\sum_k [A^*]^i_kR^{kj}=\sum_{k,a,b}R^{ai}[A^\dag]^b_aR^\dag_{kb}R^{kj}=\sum_a  R^{ai}\overline{A^a_j},\\
[((A\otimes\id)R)^*]^{ij}&=\sum_{a,b}F^i_aF^j_b\overline{[(A\otimes\id)R]^{ab}}=\sum_{k,a,b}R^{ia}R^{jb}\overline{R^{kb}}\overline{A^a_k}=\sum_aR^{ia}\overline{A^a_j}.
\end{align*}
Both sides are equal, which is what we wanted to show.
\end{proof}

\begin{ex}\label{E.full}
Let $X$ be any finite quantum set. The element $\tilde I=R\in C(X)\otimes C(X)^{\rm op}$ is exactly the rotation of the identity $I:=\id\colon l^2(X)\to l^2(X)$. By definition of a finite quantum set, the identity satisfies $I\bullet I=m(I\otimes I)m^\dag=mm^\dag=I$. From Corollary \ref{C.Flaw}, we also have $I^*=I$. Consequently, $\tilde I$ is an orthogonal projection and hence defines a quantum graph.

We will also denote $\tilde J:=1_{C(X)}\otimes 1_{C(X)}$, which is obviously a projection, so it also defines a quantum graph. Note that $R^\dag(\eta\otimes\id)=\eta^\dag$ by definition of $R^\dag$, so $J=(\id\otimes R^\dag)(\eta\otimes\eta\otimes\id)=\eta\eta^\dag$.

More concretely, we define the following quantum graphs over $X$:
\begin{itemize}
\item The \emph{empty graph} with no loops $\tilde A=0$,
\item The \emph{empty graph} with a loop at every vertex $\tilde A=\tilde I$,
\item The \emph{full graph} with no loops $\tilde A=\tilde J-\tilde I$,
\item The \emph{full graph} with a loop at every vertex $\tilde A=\tilde J$.
\end{itemize}

This is a very natural definition: The zero projection $\tilde A=0$ just means that there are no edges. In contrast, if we take the algebra identity $\tilde A=\tilde J=1_{C(X)}\otimes 1_{C(X)}$, then this means we have a projection on the whole set $X\times X$, so every pair of vertices is an edge. The identity matrix $I$ as adjacency matrix corresponds to loops in the classical case and we interpret it this way also in the quantum case.

The element $\tilde J$ is obviously symmetric and since we work over symmetric Frobenius algebras, the element $\tilde I$ must be symmetric as well. Consequently, all the graphs above are undirected.
\end{ex}

\begin{prop}[Equivalent definition of a quantum graph]
Let $X$ be a finite quantum set. A linear operator $A\colon l^2(X)\to l^2(X)$ is an adjacency matrix of a quantum graph if and only if $A\bullet A=A$, $A=A^*$. The quantum graph
\begin{enumerate}
\item is undirected if and only if $A=A^\dag$,
\item has no loops if and only if $A\bullet I=0$,
\item[(2*)] has a loop at every vertex if $A\bullet I=I$.
\end{enumerate}
\end{prop}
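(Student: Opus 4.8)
The plan is to transport each of the four conditions across the rotation. By construction the rotation is a $*$-isomorphism from $C(X)\otimes C(X)^{\rm op}$ onto the C*-algebra $\Lin(l^2(X))$ with Schur product $\bullet$ and involution $A\mapsto A^*$, whose explicit form is the content of Proposition~\ref{P.Schur}, and it carries the edge projection $\tilde A$ to the adjacency matrix $A$. Three of the four conditions of Definition~\ref{D.qgraph} then pass across with no work; the exception is the flip $T_{\crosspart}$ appearing in undirectedness, which is not encoded in the $*$-algebra structure and must be treated separately. So I would split the argument into a short transport step and one genuine lemma.

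\emph{Transport step.} An element of a C*-algebra is an orthogonal projection precisely when it is idempotent and self-adjoint, and a $*$-isomorphism preserves both conditions; hence $\tilde A$ is an orthogonal projection if and only if $A\bullet A=A$ and $A=A^*$, and conversely any such $A$ is the adjacency matrix of the quantum graph whose edge projection is the inverse rotation of $A$. For the loop clauses, Example~\ref{E.full} identifies $\tilde I=R$ as the rotation of the identity operator $I$; since the rotation is a bijective algebra homomorphism, $\tilde A\tilde I=0$ if and only if $A\bullet I=0$, and $\tilde A\tilde I=\tilde I$ if and only if $A\bullet I=I$.

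\emph{Undirectedness.} Here I would work in an orthonormal basis $(e_i)$ of $l^2(X)$, identifying operators and elements of $C(X)\otimes C(X)^{\rm op}$ with their coordinate arrays. Since $\tilde A=(A\otimes\id)R$, the array of $\tilde A$ is the matrix product $AR$; the Frobenius algebra being symmetric, $R$ is a symmetric matrix (Lemma~\ref{L.RF}), so the array of $T_{\crosspart}\tilde A$ is $(AR)^T=RA^T$, and therefore $\tilde A=T_{\crosspart}\tilde A$ is equivalent to $RA^T=AR$, that is, to $A^T=R^{-1}AR$. On the other hand a short computation with entries, like the one in the proof of Proposition~\ref{P.Schur}, shows $A^*=R\bar A R^{-1}$, so the standing relation $A=A^*$ (which holds because $(X,\tilde A)$ is already a quantum graph) says $R^{-1}AR=\bar A$. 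Substituting, the undirectedness relation $A^T=R^{-1}AR$ becomes $A^T=\bar A$, i.e.\ $A=A^\dag$; and each step is reversible. (One can alternatively derive the same identity diagrammatically, using the formula for $A^*$ from Proposition~\ref{P.Schur} and straightening with the Frobenius law, Proposition~\ref{P.Flaw} and Corollary~\ref{C.Flaw}.)

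\emph{The main obstacle.} Everything apart from the undirectedness lemma is bookkeeping about a $*$-isomorphism. The flip is the only point carrying real content, because it is genuinely extra structure: one has to track the complex conjugations, the matrix $F$ of the $*$-operation from Lemma~\ref{L.RF} (equivalently the symmetric matrix $R$ and the identity $R^{-1}=\bar R$), and the symmetry $R^{ij}=R^{ji}$. I would carry it out in coordinates as above, where once $A^*=R\bar AR^{-1}$ is established the passage to $A=A^\dag$ is a single substitution.
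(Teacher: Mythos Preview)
Your proposal is correct and follows essentially the same route as the paper. The transport step matches exactly (the paper simply says ``everything else is a direct consequence of Proposition~\ref{P.Schur}''), and for undirectedness both arguments hinge on the same two ingredients: the standing relation $A=A^*$ and the symmetry of $R$. The paper carries this out diagrammatically, showing $\widetilde{A^\dag}=T_{\crosspart}(\tilde A)$ by inserting a cap--cup pair, recognising the $A^*$ diagram from Proposition~\ref{P.Schur}, and substituting $A$ for $A^*$; your coordinate version $A^*=R\bar A R^{-1}$ together with $A^T=R^{-1}AR$ is precisely the same chain written in indices, and you even note the diagrammatic alternative yourself.
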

\begin{proof}
The only point that needs a proof is the item (1). Everything else is a direct consequence of Proposition~\ref{P.Schur}.

In order to prove (1), it is enough to show that the rotation of $A^\dag$ equals to $T_{\crosspart}(\tilde A)$. This is indeed true: Recall from Proposition~\ref{P.Schur} that $A=A^*=
\Graph{\draw (1,-0.5) -- (1,1.3) -- (1.5,1.5) node{} -- (2,1.3) -- (2,0.5) node[rectangle] {$\scriptstyle A^\dag$} -- (2,-0.3) -- (2.5,-0.5) node{} -- (3,-0.3) -- (3,1.5);}
$.
Consequently,
\[\widetilde{A^\dag}=
\Graph{\draw (2,1.5) -- (2,0.5) node[rectangle] {$\scriptstyle A^\dag$} -- (2,-0.3) -- (2.5,-0.5) node{} -- (3,-0.3) -- (3,1.5);}
= 
\Graph{\draw (0,1.5) -- (0,-0.3) -- (0.5,-0.5) node{} -- (1,-0.3) -- (1,1.3) -- (1.5,1.5) node{} -- (2,1.3) -- (2,0.5) node[rectangle] {$\scriptstyle A^\dag$} -- (2,-0.3) -- (2.5,-0.5) node{} -- (3,-0.3) -- (3,1.5);}
=
\Graph{\draw (0,1.5) -- (0,-0.3) -- (0.5,-0.5) node{} -- (1,-0.3) -- (1,0.5) node[rectangle]{$\scriptstyle A$} -- (1,1.5);}
=
\Graph{\draw (3,1.5) -- (3,1.4) -- (2,1.2) -- (2,0.4) node[rectangle] {$\scriptstyle A$} -- (2,-0.3) -- (2.5,-0.5) node{} -- (3,-0.3) -- (3,1.2) -- (2,1.4) -- (2,1.5);}
=T_{\crosspart}(\tilde A).
\qedhere\]
\end{proof}

This is how quantum graphs are defined in \cite{MRV18,MRV19,BCE+20}. The equivalence of the two definitions was originally proven in \cite[Theorem~VII.7]{MRV18}. In \cite{BEVW20}, the authors also omit the condition $A=A^*$.

\begin{ex}[Anticommutative square]\label{E.square}
We would like to construct an example of some really non-trivial quantum graph. So, consider the simplest non-trivial finite quantum set, which is $M_2$. Recall from Remark~\ref{R.op} and Example~\ref{E.MnAdj} that considering an edge projection $\tilde A$ on $M_2$, it is convenient to transpose the second leg so that we have $\tilde A\in C(M_2)\otimes C(M_2)=M_2(\C)\otimes M_2(\C)\simeq M_4(\C)$. We will use this convention throughout this example.

So, in order to define a quantum graph on $M_2$, we just need to pick some projection in $M_4(\C)$.\footnote{This actually makes it quite easy to classify all such graphs, which we are going to do in Section~\ref{secc.smallclass}.} Let us pick for instance
$$\tilde A=\frac{1}{2}
\begin{pmatrix}
1&0&0&-1\\
0&1&1&0\\
0&1&1&0\\
-1&0&0&1
\end{pmatrix}.
$$

This is obviously an orthogonal projection, so it indeed defines the structure of a quantum graph on $M_2$. Using formula~\eqref{eq.rot}, we can compute the corresponding adjacency matrix $A\colon l^2(M_2)\to l^2(M_2)$. Actually, the adjacency matrix can be read of the form of $\tilde A$ directly, if we write it down more suggestively as an element of $M_2(\C)\otimes M_2(\C)\simeq M_2(M_2(\C))$:
$$\tilde A=
\frac{1}{2}\begin{pmatrix}
\begin{pmatrix}1&0\cr0&1\end{pmatrix}&
\begin{pmatrix}0&-1\cr1&0\end{pmatrix}\cr
\begin{pmatrix}0&1\cr-1&0\end{pmatrix}&
\begin{pmatrix}1&0\cr0&1\end{pmatrix}
\end{pmatrix}.
$$

One can check that the small matrices actually stand for the images of the standard basis $(e_{ij})$ under the adjacency operator $A$. That is, the adjacency matrix $A\colon l^2(M_2)\to l^2(M_2)$ is given by
\begin{align*}
\begin{pmatrix}1&0\\0&0\end{pmatrix}\mapsto\begin{pmatrix}1&0\\0&1\end{pmatrix},\qquad
&\begin{pmatrix}0&1\\0&0\end{pmatrix}\mapsto\begin{pmatrix}0&-1\\1&\phantom+0\end{pmatrix},\cr
\begin{pmatrix}0&0\\1&0\end{pmatrix}\mapsto\begin{pmatrix}\phantom+0&1\\-1&0\end{pmatrix},\qquad
&\begin{pmatrix}0&0\\0&1\end{pmatrix}\mapsto\begin{pmatrix}1&0\\0&1\end{pmatrix}.
\end{align*}

Alternatively, we can express the images of the standard basis $(e_{ij})$ as columns and write $A$ really as a matrix
$$A=\begin{pmatrix}1&0&0&1\\0&-1&1&0\\0&1&-1&0\\1&0&0&1\end{pmatrix}.$$

Now, it is obvious that $A=A^\dag$, so the corresponding graph is undirected. (We are using the fact that $(e_{ij})$ is an orthogonal basis, so the adjoint $\dag$ is just conjugate transpose.)

Reversing this procedure, we can easily determine the edge projection $\tilde I$ corresponding to the empty graph with a loop at every vertex
$$I=
\begin{pmatrix}
1&0&0&0\\
0&1&0&0\\
0&0&1&0\\
0&0&0&1
\end{pmatrix}
\quad\to\quad
\tilde I=
\frac{1}{2}
\begin{pmatrix}
\begin{pmatrix}1&0\cr0&0\end{pmatrix}&
\begin{pmatrix}0&1\cr0&0\end{pmatrix}\cr
\begin{pmatrix}0&0\cr1&0\end{pmatrix}&
\begin{pmatrix}0&0\cr0&1\end{pmatrix}
\end{pmatrix}
=
\frac{1}{2}
\begin{pmatrix}
1&0&0&1\\
0&0&0&0\\
0&0&0&0\\
1&0&0&1
\end{pmatrix}.
$$

Finally, a straightforward computation shows that $\tilde A\tilde I=0$, so our graph $(M_2,\tilde A)$ has no loops.
\end{ex}

\section{Basic quantum graph constructions}
\label{sec.constructions}

\subsection{Isomorphism of quantum graphs}

In this section, we define the notion of an isomorphism for quantum graphs. We work here within the classical group setting. Isomorphisms of quantum graphs should not be confused with quantum isomorphisms, which will be defined in Def.~\ref{D.qiso}.

\begin{defn}
Let $X$, $Y$ be finite quantum spaces. We say that $X$ and $Y$ are \emph{isomorphic}, denoted by $X\simeq Y$, if there is a $*$-isomorphism $\phi\colon C(X)\to C(Y)$.
\end{defn}

This $*$-isomorphism then in particular must preserve the unique tracial $\delta$-form and hence it preserves the counit $\eta^\dag$, the inner product $\langle\cdot,\cdot\rangle$, and the bilinear form $R^\dag$.

We define \emph{automorphisms} of $X$ in the natural way: as isomorphisms $X\to X$. Naturally, they form a group, which will be denoted by $\Aut X$. Again, this should not be confused with the quantum automorphism group $\Aut^+ X$, which will be defined in Definitions \ref{D.QAut0}, \ref{D.QAut}.

\begin{prop}\label{P.Aut}
$\Aut M_n=PU_n$.
\end{prop}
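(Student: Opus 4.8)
The plan is to identify $\Aut M_n$ with the automorphisms of the C*-algebra $M_n(\C)$ that additionally preserve the symmetric Frobenius structure, and then invoke the Skolem--Noether theorem. Recall that by definition an element of $\Aut M_n$ is a $*$-isomorphism $\phi\colon M_n(\C)\to M_n(\C)$; the preceding discussion already notes that such a map automatically preserves the unique tracial $\delta$-form, so there is no extra constraint coming from the Frobenius structure and $\Aut M_n$ is exactly the group of $*$-automorphisms of the C*-algebra $M_n(\C)$.

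First I would recall Skolem--Noether: every algebra automorphism of $M_n(\C)$ is inner, i.e.\ of the form $x\mapsto uxu^{-1}$ for some invertible $u\in M_n(\C)$, and $u$ is unique up to a nonzero scalar. Next I would use that $\phi$ is a $*$-automorphism to pin down $u$ up to a scalar of modulus one: the condition $\phi(x^*)=\phi(x)^*$ for all $x$ forces $u^*u$ to be central, hence $u^*u=\lambda\,\id$ for some $\lambda>0$; rescaling $u$ by $\lambda^{-1/2}$ we may take $u$ unitary. Thus the map $U_n\to\Aut M_n$, $u\mapsto(x\mapsto uxu^*)$, is a surjective group homomorphism. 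Its kernel consists of those unitaries $u$ with $uxu^*=x$ for all $x$, i.e.\ the central unitaries, which are exactly the scalars $\mathbb T\cdot\id$. Therefore $\Aut M_n\cong U_n/\mathbb T = PU_n$.

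I do not expect a serious obstacle here; this is a standard fact and the only mild subtlety is the bookkeeping showing that passing from an arbitrary intertwining invertible $u$ to a unitary one is harmless, and that the resulting identification is a group isomorphism and not merely a bijection (one should check that composition of automorphisms corresponds to the product in $U_n/\mathbb T$, which is immediate from $u_1(u_2 x u_2^*)u_1^* = (u_1u_2)x(u_1u_2)^*$). If one wanted to avoid citing Skolem--Noether, one could instead argue directly: a $*$-automorphism sends the minimal projections of rank one to minimal projections of rank one, hence induces a bijection of $\mathbb{CP}^{n-1}$ preserving the transition probabilities $\tr(p q)$, and then Wigner-type rigidity gives the unitary $u$; but the Skolem--Noether route is cleaner and entirely sufficient.
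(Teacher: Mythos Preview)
Your proof is correct and follows essentially the same route as the paper: invoke Skolem--Noether to get inner automorphisms, use $*$-preservation to reduce to unitary $u$, and identify the kernel with the scalar unitaries to obtain $U_n/\mathbb T = PU_n$. The paper phrases the kernel computation as an application of Schur's lemma, while you argue directly via centrality, but this is the same argument.
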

\begin{proof}
It is well known that any automorphism $M_n(\C)\to M_n(\C)$ must be inner, i.e.\ of the form $x\mapsto UxU^{-1}$ for some invertible matrix $U\in GL_n$ (it can be proven for instance as a consequence of Skolem--Noether theorem). In addition, we require it to be a $*$-automorphism, which implies that $U$ must be unitary. That is, we have an action of $U_n$ on $M_n(\C)$ by conjugation, which realizes all automorphisms of $M_n(\C)$. As follows from Schur's lemma, the kernel of this action are precisely scalar multiples of the identity matrix, so the automorphism group equals to $U_n/\{\lambda I\}=PU_n$.
\end{proof}

\begin{rem}
In \cite{Wan98}, Wang mentions that the automorphism group of $M_n$ is the special unitary group $SU_n$. This is technically not entirely correct since $SU_n$ does not act faithfully on $M_n(\C)$. Precisely as we mentioned in the proof, we have $\lambda I\in SU_n$, where $\lambda$ is some $n$-th root of unity, which acts on $M_n(\C)$ trivially.
\end{rem}

\begin{defn}
Let $(X,\tilde A_X)$, $(Y,\tilde A_Y)$ be quantum graphs. We say that they are isomorphic, denoted by $(X,\tilde A_X)\simeq(Y,\tilde A_Y)$, if there is a $*$-isomorphism $\phi\colon C(X)\to C(Y)$ such that $\phi\circ A_X=A_Y\circ\phi$. An automorphism of $(X,\tilde A)$ is an isomorphism $(X,\tilde A)\to(X,\tilde A)$. We denote by $\Aut(X,\tilde A)$ the group of automorphisms of $(X,\tilde A)$.
\end{defn}

\begin{rem}
For the full graph and the empty graph, with or without loops, the automorphism group coincides with automorphisms of the underlying quantum space. Indeed, any $*$-automorphism $C(X)\to C(X)$ automatically commutes with $I$ and $J=\eta\eta^\dag$.
\end{rem}

\subsection{Quantum subgraphs}

A subgraph of a classical graph can be made either by removing edges or by removing vertices. For simplicity, let us first discuss those two constructions separately.

\begin{defn}\label{D.Esubgraph}
Let $(X,\tilde A)$ be a quantum graph. We say that a quantum graph $(X,\tilde A')$ is a quantum subgraph \emph{made by removing edges} if $\tilde A'\le\tilde A$ as a projection.
\end{defn}


Now, we look on the procedure of removing vertices.

\begin{defn}\label{D.qsubspace}
Let $X$ be a finite quantum set. A \emph{quantum subset} $Y\subset X$ is the finite quantum set given by any quotient $C(Y)$ of $C(X)$.
\end{defn}

\begin{rem}\label{R.qsubspace}
Recall that any finite-dimensional C*-algebra is of the form $C(X)=\bigoplus_{i=1}^a M_{n_i}(\C)$. It is well known that matrix algebras $M_n(\C)$ are simple. Hence, taking a quotient $C(Y)$ of $C(X)$ amounts to sending some of the factors $M_{n_i}(\C)$ to zero. In other words, taking a quantum subset $Y\subset X$ amounts to taking a classical subset $S\subset \{1,\dots,a\}$.

As a consequence, considering the surjective $*$-homomorphism $q\colon C(X)\to C(Y)$, it holds that $q^\dag$ is an isometry and a non-unital $*$-homomorphism embedding the non-zero factors in $C(Y)$ to $C(X)$.
\end{rem}

\begin{defn}\label{D.Vsubgraph}
Let $(X,\tilde A_X)$ be a quantum graph. Consider a finite quantum subset $Y\subset X$. We define the \emph{induced subgraph} $(Y,\tilde A_Y)$ to be given by $\tilde A_Y=(q\otimes q)(\tilde A_X)$, where $q\in C(X)\to C(Y)$ is the corresponding surjective $*$-homomorphism.
\end{defn}

Since $q$ is a $*$-homomorphism, it is easy to check that $\tilde A_Y$ indeed is an orthogonal projection. Using Proposition~\ref{P.homo}(3), we can write the definition in a rotated way using the adjacency matrices as $A_Y=qA_Xq^\dag$.

Summarizing the definitions above, we may define a general notion of a quantum subgraph:

\begin{defn}\label{D.subgraph}
Let $(X,\tilde A_X)$, $(Y,\tilde A_Y)$ be quantum graphs. We say that $(X,\tilde A_X)$ is a \emph{quantum subgraph} of $(Y,\tilde A_Y)$ if there is a surjective $*$-homomorphism $C(X)\to C(Y)$ such that $\tilde A_Y\le (q\otimes q)(\tilde A_X)$.
\end{defn}

\begin{rem}
In \cite[Section~6]{Wea21} a similar but weaker notion named \emph{restriction} is defined.
\end{rem}

\subsection{Remark on vertices, edges and counting them}\label{secc.vertices}

In the sense of Definition~\ref{D.qsubspace} and Remark~\ref{R.qsubspace}, it seems natural to interpret the factors in the decomposition $C(X)=\bigoplus_{i=1}^a M_{n_i}(\C)$ as \emph{points} in the finite quantum space or \emph{vertices} of the quantum graph. Indeed, those are constituents of the quantum space $X$ that are indecomposable using the notion of a quantum subset. One might be then tempted to say that the finite quantum space $X$ consists of $a$ vertices.

However, as will turn out later on, this is not a convenient viewpoint. We claim that the factor $M_n(\C)$ should be considered to be consisting of $n^2$ vertices, so the whole finite quantum space $X$ with $C(X)=\bigoplus_i M_{n_i}(\C)$ consists of $\#X:=\sum_i n_i^2=\dim C(X)$ vertices. One reason is that this quantity can be expressed in a sort of invariant way as $\#X=\eta^\dag\eta$. In particular, it will turn out that the number of vertices is invariant under \emph{quantum isomorphisms}, which will be defined in Def.~\ref{D.qiso}. Another reason comes in the following section, where we are going to define \emph{quantum quotient sets}. A reasonable requirement is that taking quantum quotients always reduces the number of vertices, which would not be true for the former definition of a vertex.%
\footnote{Fans of chemistry might enjoy the following parallel: The finite quantum sets $X$ are molecules, which consist of atoms $M_{n_i}(\C)$. The atoms $M_{n_i}$ are defined by their proton number $n_i$, they have atomic mass $n_i^2$, and they cannot be decomposed by any chemical processes (taking quantum subsets), but can be split by nuclear fission (taking quantum set quotient).}

So, from now on, given a quantum graph $(X,\tilde A)$ with $C(X)=\bigoplus_{i=1}^a M_{n_i}(\C)$ we will distinguish between the following two notions. A \emph{quantum vertex} is the indecomposable component $M_{n_i}$ of $X$, where we need to keep in mind that the number of quantum vertices $a$ is not a nice invariant notion. In contrast, the notion \emph{vertex} will not have any concrete meaning for quantum graphs, but we define the \emph{number of vertices} $\#V=\eta^\dag\eta=\dim C(X)$. In particular, we say that each quantum vertex $M_{n_i}$ consists of $n_i^2$ vertices.

A similar story can be said about edges. Let $(X,\tilde A)$ be a quantum graph. Indecomposable constituents of the edge projection $\tilde A$ are the minimal projections $\tilde P\le \tilde A$. So, in a sense, those could be considered as the edges of $(X,\tilde A)$ (we will sometimes refer to them as the \emph{quantum edges}). Nevertheless, when counting edges, one should consider a slightly more sophisticated viewpoint defining the \emph{number of edges} as $\#E(V,\tilde A):=(\eta^\dag\otimes\eta^\dag)(\tilde A)=\eta^\dag A\eta$. In particular, taking a minimal projection $\tilde P\in M_{n_i}(\C)\otimes M_{n_j}(\C)$, we interpret it as consisting of $n_in_j$ edges.

Note that in \cite{Mat21} a definition of {\em $d$-regular} quantum graphs was introduced using the condition $\eta^\dag A=d\eta^\dag$. This means that any $d$-regular graph has $\eta^\dag A\eta=d\eta^\dag\eta=dN$ edges, where $N=\eta^\dag\eta$ is the number of vertices. This exactly corresponds to the classical case.

\subsection{Graph quotients}

\begin{defn}
Let $X$ be a finite quantum set. A \emph{quotient quantum set} $Y$ is the finite quantum set given by a $*$-subalgebra $C(Y)\subset C(X)$.
\end{defn}

We aim to define a quotient quantum graph by putting $\tilde A_Y=(\iota^\dag\otimes\iota^\dag)(\tilde A_X)$. Note however that even classical graphs are not closed under this procedure, because taking such a quotient may create multiple edges. Therefore, we need to extend our framework slightly.

\begin{defn}
A \emph{weighted quantum graph} is a pair $(X,\tilde A)$, where $X$ is a finite quantum set and $\tilde A\in C(X)\otimes C(X)^{\rm op}$ a positive element. A quantum \emph{multigraph} is a weighted quantum graph, where the spectrum of $\tilde A$ is contained in $\N_0$. For both cases we define the notion of being \emph{undirected} and having \emph{no loops} the same way as in the case of quantum graphs.
\end{defn}

Recall the following well-known fact.

\begin{lem}\label{L.positive}
A linear map $\phi\colon C(X)\to C(Y)$ is positive if and only if $\phi^\dag\colon C(Y)\to C(X)$ is positive.
\end{lem}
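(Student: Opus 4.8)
The plan is to deduce the statement from a self-dual reformulation of positivity. First note that $\phi^{\dag\dag}=\phi$ (from conjugate-symmetry of the inner products: $\langle\phi^{\dag\dag}a,b\rangle=\langle a,\phi^\dag b\rangle=\overline{\langle\phi^\dag b,a\rangle}=\overline{\langle b,\phi a\rangle}=\langle\phi a,b\rangle$), so it suffices to establish a criterion for positivity that is manifestly symmetric under the exchange $\phi\leftrightarrow\phi^\dag$.

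The key ingredient is the self-duality of the cone of positive elements under the trace pairing: for a finite quantum set $X$ with $C(X)=\bigoplus_i M_{n_i}(\C)$, a self-adjoint $c\in C(X)$ is positive if and only if $\eta^\dag(cd)\ge 0$ for every positive $d\in C(X)$. This is immediate blockwise, since $\eta^\dag$ restricts to $n_i\Tr$ on $M_{n_i}(\C)$: if $c\ge0$ then $\Tr(cd)=\Tr(d^{1/2}cd^{1/2})\ge0$, while conversely testing against rank-one eigenprojections of $c$ forces all its eigenvalues to be nonnegative. In particular $\langle b,d\rangle=\eta^\dag(b^*d)=\eta^\dag(bd)\ge0$ whenever $b,d\ge0$, because $\eta^\dag$ is a positive functional and $bd$ is a product of positives.

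Using this I would prove the intermediate claim: $\phi\colon C(X)\to C(Y)$ is positive if and only if $\langle b,\phi(a)\rangle_Y\ge0$ for all positive $a\in C(X)$ and $b\in C(Y)$. The forward implication is immediate since $\phi(a)\ge0$ and $\eta^\dag_Y$ is positive. For the converse, fix $a\ge0$ and write $\phi(a)=h+\im k$ with $h,k$ self-adjoint; traciality of $\eta^\dag_Y$ (together with $\overline{\eta^\dag_Y(x)}=\eta^\dag_Y(x^*)$) makes $\eta^\dag_Y(bh)$ and $\eta^\dag_Y(bk)$ real for self-adjoint $b$, so the hypothesis forces $\eta^\dag_Y(bk)=0$ for all self-adjoint $b$, hence $k=0$ by faithfulness (take $b=k$); then $\eta^\dag_Y(bh)\ge0$ for all $b\ge0$ gives $h\ge0$ by the self-duality above, i.e.\ $\phi(a)\ge0$. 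Finally, the criterion is visibly symmetric: $\langle a,\phi^\dag(b)\rangle_X=\overline{\langle\phi^\dag(b),a\rangle_X}=\overline{\langle b,\phi(a)\rangle_Y}$, so the condition ``$\langle b,\phi(a)\rangle\ge0$ for all positive $a,b$'' is literally the same as ``$\langle a,\phi^\dag(b)\rangle\ge0$ for all positive $a,b$'', which by the criterion applied to $\phi^\dag\colon C(Y)\to C(X)$ says precisely that $\phi^\dag$ is positive.

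There is no deep obstacle here — the lemma is classical — and the only points requiring a little care are the converse direction of the criterion (the scalar condition must be shown to force $\phi(a)$ self-adjoint, not merely to control its ``real part'') and the self-duality of the positive cone, which is where finite-dimensionality and the traciality of $\eta^\dag$ genuinely enter. If the complete-positivity version were wanted instead, the same argument applied to $\phi\otimes\id_{M_k}$, whose adjoint is $\phi^\dag\otimes\id_{M_k}$, would give it.
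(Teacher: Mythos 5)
Your proof is correct and follows essentially the same route as the paper's: both rest on the self-duality of the positive cone under the trace pairing and the identity $\langle b,\phi(a)\rangle=\langle\phi^\dag(b),a\rangle$ (the paper tests against the rank-one positives $xx^*$, $yy^*$ after reducing to single matrix blocks, which is just the extremal case of your criterion). The one point you treat more explicitly than the paper is that the scalar condition must force $\phi(a)$ to be self-adjoint — in the paper this is absorbed into the fact that $y^*My\ge0$ for all $y\in\C^m$ already implies $M=M^\dag$ over $\C$.
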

\begin{proof}
As usual, $\dag$ denotes the adjoint with respect to the inner product in $l^2(X)$ and $l^2(Y)$. Suppose that $C(X)=\bigoplus_i M_{n_i}(\C)$, $C(Y)=\bigoplus_j M_{m_j}(\C)$. Then $\phi$ decomposes as a sum of mappings $\phi^j_i\colon M_{n_i}(\C)\to M_{m_j}(\C)$. One can see that all $\phi_i^j$ are positive if and only if $\phi$ itself is positive.

Consequently, we may without loss of generality assume that $C(X)=M_n(\C)$ and $C(Y)=M_m(\C)$. The mapping $\phi$ is positive if it maps positive elements of $C(X)$ to positive elements of $C(Y)$. A positive element can be diagonalized and hence written as a linear combination of projections. Therefore, $\phi$ is positive if and only if $\phi(xx^*)\ge 0$ for every $x\in\C^n$. An element $b\in C(Y)$ is positive if and only if $y^*by\ge 0$ for every $y\in\C^m$. So, $\phi$ is positive if and only if
\begin{align*}
0\le y^*\phi(xx^*)y&=\Tr(yy^*\phi(xx^*))=\frac{1}{m}\langle yy^*,\phi(xx^*)\rangle=\frac{1}{m}\langle \phi^\dag(yy^*),xx^*\rangle\\&=\frac{n}{m}\,\overline{\Tr(xx^*\phi^\dag(yy^*))}=\frac{n}{m}\,\overline{x^*\phi^\dag(yy^*)x},
\end{align*}
which holds if and only if $\phi^\dag$ is positive.
\end{proof}

Recall also that any $*$-homomorphism must be positive. So, in particular, any embedding $\iota\colon C(Y)\to C(X)$ is positive and, thanks to the lemma above, also its adjoint $\iota^\dag$ is positive. Consequently, considering a positive element $\tilde A_X\in C(X)\otimes C(X)^{\rm op}$, it must hold that $(\iota^\dag\otimes\iota^\dag)\tilde A_X$ is also positive. This allows to make the following definition.

\begin{defn}
Let $(X,\tilde A_X)$ be a weighted quantum graph. Consider $Y$ a quantum quotient space of $X$. We define the quantum quotient graph $(Y,\tilde A_Y)$ by $\tilde A_Y=(\iota^\dag\otimes\iota^\dag)(\tilde A_X)$, where $\iota$ is the corresponding embedding $C(Y)\to C(X)$.
\end{defn}

\begin{rem}
Since $\iota$ is a unital $*$-homomorphism, we have $\iota\eta_Y=\eta_X$. Consequently also $\eta_Y^\dag\iota^\dag=\eta_X^\dag$. Therefore, taking graph quotients preserves the quantity $(\eta^\dag\otimes\eta^\dag)\tilde A=\eta^\dag A\eta$, which in case of weighted quantum graphs can be interpreted as the \emph{sum of all weights}. As we mentioned in Section~\ref{secc.vertices}, for ordinary quantum graphs (or quantum multigraphs), this can be interpreted simply as the number of all edges.
\end{rem}

In contrast with classical graphs, in the quantum world taking graph quotients may not only create multiple edges, but we may obtain a general weighted graph (i.e.\ the multiplicities may not be natural numbers). We illustrate this in the following example.

\begin{ex}
Consider the quantum set $M_2$, which has the classical set of two elements $X_2$ as a quotient -- we may take the obvious inclusion $C(X_2)=\C\oplus\C\to M_2(\C)$ using diagonal matrices. Using the standard bases $(e_1,e_2)$ for $\C^2=C(X_2)$ and $(e_{11},e_{12},e_{21},e_{22})$ for $M_2(\C)$, we may express $\iota\colon l^2(X_2)\to l^2(M_2)$ as a matrix
$$\iota=
\begin{pmatrix}
1&0\\
0&0\\
0&0\\
0&1
\end{pmatrix}
$$

Note that the basis $(e_i)$ of $\C^2$ is orthonormal, but the basis $(e_{ij})$ of $M_2(\C)$ is not. Consequently, the $\iota^\dag$ is not given just by conjugate transpose, but we have to normalize it:
$$\iota^\dag=2
\begin{pmatrix}
1&0&0&0\\
0&0&0&1
\end{pmatrix}
$$

Now we define a graph $X=(M_2,\tilde A_X)$ on $M_2$ by
$$\tilde A_X=\frac{1}{4}
\begin{pmatrix}
1&1&1&-1\\
1&1&1&-1\\
1&1&1&-1\\
-1&-1&-1&1
\end{pmatrix}
\quad\leftrightarrow\quad
A_X=\frac{1}{2}
\begin{pmatrix}
1&1&1&1\\
1&-1&1&-1\\
1&1&-1&-1\\
1&-1&-1&1
\end{pmatrix}
$$

Note that this is a simple quantum graph, so the nicest possible example. In addition, $\tilde A_X$ is a minimal projection and it consists of $(\eta^\dag\otimes\eta^\dag)\tilde A_X=n^2\Tr\tilde A_X=4$ edges. Now, let us compute the quotient graph $Y=(X_2,\tilde A_Y)$:
$$A_Y=\iota^\dag A_X\iota=
\begin{pmatrix}
1&1\\
1&1
\end{pmatrix}
$$

This is an adjacency matrix of a classical graph with four simple edges, which can be drawn as
\begin{tikzpicture}[x=1.5em,y=1.5em, baseline=-3pt]
\node[circle,draw,fill=black,inner sep=1pt] (a) at (0,0) {};
\node[circle,draw,fill=black,inner sep=1pt] (b) at (1,0) {};
\path[->] (a) edge[bend left] (b) edge[loop left] ();
\path[->] (b) edge[bend left] (a) edge[loop right] ();
\end{tikzpicture}.
It is actually the full graph on two vertices.

However, modifying this example, we can arrive at some weighted classical graph instead. Take for instance
$$\tilde A_X=\frac{1}{8}
\begin{pmatrix}
     3& \sqrt3& \sqrt3&-     3\\
\sqrt3&      1&      1&-\sqrt3\\
\sqrt3&      1&      1&-\sqrt3\\
    -3&-\sqrt3&-\sqrt3&      3
\end{pmatrix}
\quad\leftrightarrow\quad
A_X=\frac{1}{4}
\begin{pmatrix}
     3& \sqrt3& \sqrt3&      1\\
\sqrt3&     -3&      1&-\sqrt3\\
\sqrt3&      1&     -3&-\sqrt3\\
     1&-\sqrt3&-\sqrt3&      3
\end{pmatrix}.
$$

This is again a minimal projection and defines a simple quantum graph with four edges. Its quotient is then given by the adjacency matrix
$$A_Y=\iota^\dag A_X\iota=
\frac{1}{2}
\begin{pmatrix}
3&1\\
1&3
\end{pmatrix}.
$$
That is, we get a weighted graph
\begin{tikzpicture}[x=1.5em,y=1.5em, baseline=-3pt]
\node[circle,draw,fill=black,inner sep=1pt] (a) at (0,0) {};
\node[circle,draw,fill=black,inner sep=1pt] (b) at (1,0) {};
\path[->] (a) edge[bend left] node[above] {$\scriptstyle1/2$} (b)
              edge[loop left] node {$\scriptstyle3/2$} ();
\path[->] (b) edge[bend left] node[below] {$\scriptstyle1/2$} (a)
              edge[loop right] node{$\scriptstyle3/2$} ();
\end{tikzpicture}.
\end{ex}

\section{Concrete viewpoint on quantum graphs}\label{sec.concrete}

In this section, we would like to build some intuition about what are actually quantum graphs and how to construct them. In the simplest case $X=M_2$, we are going to classify all quantum graphs up to isomorphism. The main tool for that is an equivalent definition of a quantum graph, which actually is the original definition formulated by Weaver in \cite{Wea12}. 

\subsection{General case}

\begin{figure}
\centering
\begin{tikzpicture}[x=1em, y=1em]
\node[circle,fill=black!25,label=left:$\scriptstyle V_{33}\subset\Lin{(\C^3,\C^3)}$](A) at (-7,0)
{
	\begin{tikzpicture}[x=1em, y=1em,every node/.style={circle,draw,fill=white,inner sep=1pt},]
	\draw[gray] (1,1) -- (1,2) -- (2,2) -- (2,1) -- cycle;
	\draw[gray] (2,2) -- (2,3) -- (3,3) -- (3,2) -- cycle;
	\draw[gray] (3,1) -- (1,3);
	\draw[gray] (3,2) -- (2,1);
	\draw[gray] (2,3) -- (1,2);
	\node[circle,fill=black] at (1,1){};
	\node[circle,fill=black] at (1,2){};
	\node[circle,fill=black] at (1,3){};
	\node[circle,fill=black] at (2,1){};
	\node[circle,fill=black] at (2,2){};
	\node[circle,fill=black] at (2,3){};
	\node[circle,fill=black] at (3,1){};
	\node[circle,fill=black] at (3,2){};
	\node[circle,fill=black] at (3,3){};
	\end{tikzpicture}
};
\node[circle,fill=black!25,label=right:$\scriptstyle V_{22}\subset\Lin{(\C^2,\C^2)}$](B) at (7,0)
{
	\begin{tikzpicture}[x=1em, y=1em,every node/.style={circle,draw,fill=white,inner sep=1pt},]
	\node[circle,fill=black] (a) at (1,1){};
	\node[circle,fill=black] (b) at (1,2){};
	\node[circle,fill=black] (c) at (2,1){};
	\node[circle,fill=black] (d) at (2,2){};
	\draw[gray,->] (a) -- (c);
	\draw[gray,->] (c) -- (d);
	\draw[gray,->] (d) -- (b);
	\draw[gray,->] (b) -- (a);
	\end{tikzpicture}
};
\node[circle,fill](C) at (0,-10){};
\path[->] (C) edge [loop below] node[below]{$\scriptstyle V_{11}\subset\Lin{(\C,\C)}$} ();
\fill[fill=black!25] (C.135) to[bend left] (A.270) to (A.225) to[bend right] node[sloped,below]{$\scriptstyle V_{13}\subset\Lin{(\C,\C^3)}$} (C.180) to cycle;
\draw[gray,->] (C.157) to[bend left] (A.230);
\draw[gray,->] (C.157) to[bend left] (A.248);
\draw[gray,->] (C.157) to[bend left] (A.265);
\fill[fill=black!25] (A.270) to[bend left] node[sloped,below]{$\scriptstyle V_{31}\subset\Lin{(\C^3,\C)}$} (C.135) to (C.90) to[bend right] (A.315) to cycle;
\draw[gray,->] (A.275) to[bend left] (C.112);
\draw[gray,->] (A.293) to[bend left] (C.112);
\draw[gray,->] (A.310) to[bend left] (C.112);
\fill[fill=black!25] (C.45) to[bend right] (B.270) to (B.315) to[bend left] node[sloped,below]{$\scriptstyle V_{12}\subset\Lin{(\C,\C^2)}$} (C.0) to cycle;
\draw[gray,->] (C.22) to[bend right] (B.280);
\draw[gray,->] (C.22) to[bend right] (B.305);
\fill[fill=black!25] (B.225) to[bend right] (C.90) to (C.45) to[bend left] node[sloped,below]{$\scriptstyle V_{21}\subset\Lin{(\C^2,\C)}$} (B.270) to cycle;
\draw[gray,->] (B.235) to[bend right] (C.67);
\draw[gray,->] (B.260) to[bend right] (C.67);
\fill[fill=black!25] (A.45) to[bend left] node[sloped,above]{$\scriptstyle V_{32}\subset\Lin{(\C^3,\C^2)}$} (B.135) to (B.180) to[bend right] (A.0) to cycle;
\draw[gray,->] (A.40) to[bend left] (B.145);
\draw[gray,->] (A.40) to[bend left] (B.170);
\draw[gray,->] (A.23) to[bend left] (B.145);
\draw[gray,->] (A.5) to[bend left] (B.170);
\fill[fill=black!25] (B.225) to[bend left] (A.315) to (A.0) to[bend right] node[sloped,above]{$\scriptstyle V_{23}\subset\Lin{(\C^2,\C^3)}$} (B.180) to cycle;
\draw[gray,->] (B.190) to[bend left] (A.320);
\draw[gray,->] (B.215) to[bend left] (A.355);
\draw[gray,->] (B.215) to[bend left] (A.337);
\draw[gray,->] (B.215) to[bend left] (A.320);
\end{tikzpicture}
\caption{A diagram of a quantum graph}\label{fig.qgraph}
\smallskip\footnotesize
This diagram is supposed to depict a typical quantum graph. In this case, we chose a quantum space $X$ with $C(X)=\C\oplus M_2(\C)\oplus M_3(\C)$. That is, the graph consists of one classical ``full-fat'' vertex and two quantum vertices $M_2$ and $M_3$, which consist of four resp.\ nine vertices. Each of these quantum vertices can have some ``internal edges'', which are described by the operator spaces $V_{ii}\subset \Lin(\C^i,\C^i)$. In particular, for the classical vertex, we have $V_{11}\subset\Lin(\C,\C)\simeq\C$ -- the potential internal edge is the loop, which either is or is not contained in the graph. In our case, we include the loop and denote it by the black loop arrow. The internal edges of the non-classical quantum vertices have so-to-say a quantum character and in the picture are illustrated by the gray edges inside the gray circles. In this case, the gray lines are purely illustrative and have little formal meaning. The same can be said about the spaces $V_{ij}\subset\Lin(\C^i,\C^j)$ that describe the oriented edges between the quantum vertices.

Finally, note that if $X$ was a classical space, so $C(X)=\C\oplus\C\oplus\C\oplus\cdots$, then such a diagram would constitute the standard picture of a classical graph. In contrast, if we have $X=M_n$, then trying to draw such a diagram is completely pointless.
\end{figure}
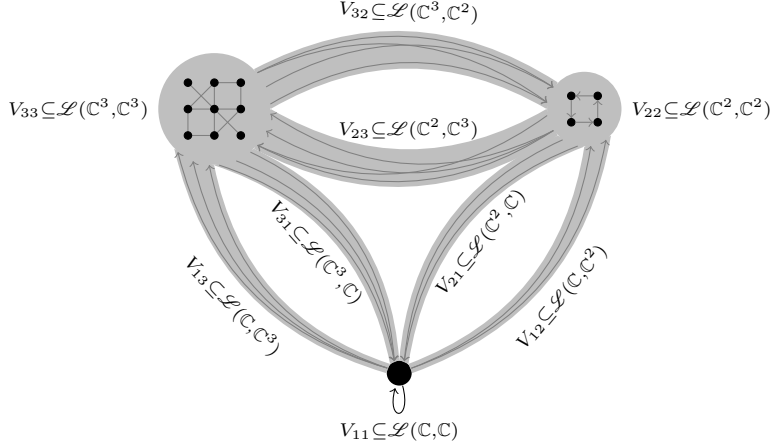

As the title of Section~\ref{sec.concrete} suggests, we aim to work here in a less abstract way. So, first of all, we fix a representation of our underlying algebra as $C(X)=\bigoplus_{i=1}^aM_{n_i}(\C)$.

We defined a quantum graph over $X$ to be given by a projection $\tilde A\in C(X)\otimes C(X)^{\rm op}$. As we mentioned several times in this paper already, having a concrete realization of $C(X)$ by matrices, it is convenient to identify $C(X)^{\rm op}$ with $C(X)$ through transposition. So, we have
$$\tilde A\in\left(\bigoplus_{i=1}^a M_{n_i}(\C)\right)\otimes\left(\bigoplus_{j=1}^a M_{n_j}(\C)\right)$$

Equivalently, such a projection is determined by a set of projections
$$\tilde A_{ij}\in M_{n_i}(\C)\otimes M_{n_j}(\C).$$

Those projections act on the vector space $\C^{n_i}\otimes\C^{n_j}$. We can identify the tensor product $\C^{n_i}\otimes\C^{n_j}$ with the vector space of linear operators $\C^{n_i}\to\C^{n_j}$. Every projection acting on some vector space is determined by its range. Consequently our quantum graph is equivalently described by a set of vector subspaces
$$V_{ij}\subset \C^{n_i}\otimes\C^{n_j}\simeq\Lin(\C^{n_i},\C^{n_j})$$

Finally, we can put all those vector spaces together and observe that a quantum graph is given by a single operator space
$$V\subset \Lin(\C^n)\simeq M_n(\C)$$
which can be decomposed as $V=\bigoplus V_{ij}$ with $V_{ij}\subset\Lin(\C^{n_i},\C^{n_j})$. One can check that this decomposition condition can also be written as $C(X)'VC(X)'\subset V$, where $C(X)'$ is the commutant of $C(X)$.

The structure of a general quantum graph is illustrated in Figure~\ref{fig.qgraph}.

In the proposition below, we summarize the characterization that we just derived. As was already mentioned, it is actually the original definition of a quantum graph by Weaver from \cite{Wea12} (note that our \emph{quantum graphs} are called \emph{quantum relations} in \cite{Wea12,Wea21}; in contrast, Weaver's quantum graphs are assumed to be undirected and have a loop at every vertex).

\begin{prop}[Equivalent definition of a quantum graph]
Let $X$ be a finite quantum set. A quantum graph $(X,\tilde A)$ is equivalently described by an operator space $V\subset\Lin(\C^n)$ such that $C(X)'VC(X)'\subset V$. The quantum graph
\begin{enumerate}
\item is undirected if and only if $V=V^\dag$,
\item has no loops if and only if $\Tr_i\xi=0$ for every $\xi\in V$ and every $i=1,\dots,a$, where $\Tr_i$ is the partial trace related to the subspace $\C^{n_i}\subset\C^n$,
\item[(2*)] has a loop at every vertex if and only if $C(X)'\subset V$.
\end{enumerate}
\end{prop}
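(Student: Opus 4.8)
The plan is to establish the dictionary between the edge-projection picture and the operator-space picture leg by leg, and then to translate each of the three conditions (undirected, no loops, loop at every vertex) through this dictionary.

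First I would set up the correspondence itself. Writing $C(X)=\bigoplus_{i=1}^a M_{n_i}(\C)$ and $\C^n=\bigoplus_i\C^{n_i}$, an element $\tilde A\in C(X)\otimes C(X)^{\rm op}$ (identified with $C(X)\otimes C(X)$ via transposition on the second leg) decomposes as a block-family $\tilde A_{ij}\in M_{n_i}(\C)\otimes M_{n_j}(\C)$. Each $\tilde A_{ij}$ acting on $\C^{n_i}\otimes\C^{n_j}\simeq\Lin(\C^{n_i},\C^{n_j})$ is an orthogonal projection exactly when its range $V_{ij}$ is a subspace; conversely any family of subspaces determines the orthogonal projections onto them. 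Assembling $V=\bigoplus_{ij}V_{ij}\subset\bigoplus_{ij}\Lin(\C^{n_i},\C^{n_j})=\Lin(\C^n)$, the content of the bimodule condition is that $V$ is closed under left and right multiplication by block-diagonal operators in $C(X)'$ (which acts on $\C^{n_i}$ as scalars on each block, hence the only effect is to rescale components — so in fact the condition $C(X)'VC(X)'\subset V$ is automatically equivalent to $V$ having the block-decomposed form, as already noted in the text). So the equivalence of the two notions is really the remark preceding the proposition; the proposition's actual job is (1), (2), (2*).

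Next I would translate condition (1). The map $\tilde A\mapsto A$ (rotation) turns the edge projection into the adjacency operator, and Example~\ref{E.MnAdj}/formula~\eqref{eq.rot} show that on the $M_n$-block this rotation is essentially the identity up to a scalar, so $A$ acts on $\C^n$ with matrix units permuted in a controlled way. Undirectedness of $(X,\tilde A)$ was characterized in the earlier proposition as $A=A^\dag$. Under the identification $\C^{n_i}\otimes\C^{n_j}\simeq\Lin(\C^{n_i},\C^{n_j})$, taking the Hilbert-space adjoint of an operator corresponds exactly to the operation $\xi\mapsto\xi^\dag$ (adjoint of the matrix $\xi$), which swaps $V_{ij}$ and $V_{ji}$. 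Hence $A=A^\dag$ becomes $V_{ij}^\dag=V_{ji}$ for all $i,j$, i.e. $V=V^\dag$. I would spell this out on matrix units to make the $\dag$-on-$\xi$ vs $\dag$-on-$A$ comparison clean, since that is the one genuinely fiddly identification.

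Finally, conditions (2) and (2*). Recall $\tilde I=R$, whose rotation is $I=\id$, and that "no loops" means $\tilde A\tilde I=0$, "loop at every vertex" means $\tilde A\tilde I=\tilde I$ — equivalently, in rotated form, $A\bullet I=0$ resp. $A\bullet I=I$, where $\bullet$ is the Schur product from Proposition~\ref{P.Schur}. Now $R\in C(X)\otimes C(X)^{\rm op}$ sits in the diagonal blocks $\tilde I_{ii}\in M_{n_i}(\C)\otimes M_{n_i}(\C)$ and corresponds under $\C^{n_i}\otimes\C^{n_i}\simeq\Lin(\C^{n_i})$ to (a scalar multiple of) the identity operator $\id_{\C^{n_i}}$. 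So the projection $\tilde I$ projects, within the $(i,i)$-block, onto the line $\C\cdot\id_{\C^{n_i}}$. Multiplying projections: $\tilde A\tilde I=\tilde I$ iff $\C\cdot\id_{\C^{n_i}}\subset V_{ii}$ for each $i$, which is exactly $C(X)'\subset V$ (the commutant $C(X)'$ consists precisely of the block-diagonal scalars $\bigoplus_i\lambda_i\id_{\C^{n_i}}$). And $\tilde A\tilde I=0$ iff $V_{ii}\perp\id_{\C^{n_i}}$ in $\Lin(\C^{n_i})$ with respect to the Hilbert–Schmidt inner product, i.e. $\Tr_i\xi=\langle\id_{\C^{n_i}},\xi\rangle=0$ for every $\xi\in V$ — which is condition (2) as stated, since $\Tr_i$ here is the partial trace onto the $i$-th block (it annihilates the off-diagonal parts $V_{ij}$ with $i\ne j$ automatically). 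I would just check that the Hilbert–Schmidt pairing on $\Lin(\C^{n_i})$ agrees with the rescaled inner product on $l^2(X)$ restricted to that block (up to the harmless factor $n_i$ from Example~\ref{ex.Mn}), so "$\tilde A\tilde I=0$" really does say "$V_{ii}$ is trace-orthogonal to the identity."

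The main obstacle I anticipate is bookkeeping: keeping straight the transposition on the second leg, the normalization factors hidden in the rescaled inner product and in the formula $R_{abcd}=n\delta_{ad}\delta_{bc}$, and — most of all — the fact that the $\dag$ appearing in "$V=V^\dag$" is the adjoint of $\xi\in\Lin(\C^{n_i},\C^{n_j})$ regarded as a matrix, whereas the $\dag$ in "$A=A^\dag$" is the adjoint of $A$ regarded as an operator on $l^2(X)$. Verifying that these two $\dag$'s match under the identification $\C^{n_i}\otimes\C^{n_j}\simeq\Lin(\C^{n_i},\C^{n_j})$ is the one place where an explicit computation on matrix units is unavoidable; everything else is a direct transcription of the earlier propositions.
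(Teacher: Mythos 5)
Your proposal is correct and follows essentially the same route as the paper: the block decomposition $\tilde A=(\tilde A_{ij})$ with ranges $V_{ij}\subset\Lin(\C^{n_i},\C^{n_j})$, the identification of $C(X)'VC(X)'\subset V$ with that block structure, and the translation of (1), (2), (2*) via the rotation and the fact that $\tilde I$ projects onto $\bigoplus_i\C\cdot\id_{\C^{n_i}}$. The paper itself only sketches this correspondence in the surrounding text, deferring details to Weaver and to its own $X=M_n$ computation (Proposition~\ref{P.edge} and Theorem~\ref{T}); your write-up supplies the same argument for a general multi-block algebra, including the one genuinely delicate point you correctly isolate, namely matching the operator adjoint on $l^2(X)$ with $\xi\mapsto\xi^\dag$ under $\C^{n_i}\otimes\C^{n_j}\simeq\Lin(\C^{n_i},\C^{n_j})$.
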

\begin{proof}
The correspondence between quantum graphs and operator spaces was already described above. See \cite[Proposition~2.23]{Wea12} or \cite[Proposition~VII.5]{MRV18} for a more detailed proof.

We are going to present a detailed proof for the case $X=M_n$ in the following section.
\end{proof}

\subsection{Case $X=M_n$}
In this section, we will look at quantum graphs over the quantum space $M_n$. We will work in the standard orthogonal basis $(e_{ij})$. Recall from Example~\ref{E.MnAdj} that the adjacency matrix $A\colon l^2(X_n)\to l^2(X_n)$ of a graph $(M_n,\tilde A)$ is given by $A^{ij}_{kl}=n\tilde A^{ijkl}$. Here, we again use the ``transposed'' version of $\tilde A\in C(X)\otimes C(X)=M_n(\C)\otimes M_n(\C)$.

The edge projection $\tilde A\in M_n(\C)\otimes M_n(\C)$ acts on the vector space $\C^n\otimes\C^n$. Similarly to the ``rotation'' idea for adjacency matrices, it will be also convenient to rotate vectors $\tilde\xi\in\C^n\otimes\C^n$:

\begin{nota}
For a given vector $\tilde\xi\in\C^n\otimes\C^n$, we define a linear operator $\xi\colon\C^n\to\C^n$ by $\xi^i_j=\sqrt{n}\,\tilde\xi^{ij}$ and vice versa. We will write $\xi\in\glin_n$ instead of $\xi\in M_n(\C)$ in order to distinguish this kind of matrices from the elements of $C(X)$.

We will denote by $\tilde\xi^*$ the dual functional on $\C^n\otimes\C^n$ with entries $[\tilde\xi^*]_{ij}=\overline{\tilde\xi^{ij}}$. We will denote by $\xi^*$ the complex conjugate of $\xi$, i.e.\ $[\xi^*]^i_j=\overline{\xi^i_j}$. Finally, we denote by $\xi^\dag$ the conjugate transpose of $\xi$, i.e.\ $[\xi^\dag]^i_j=\overline{\xi^j_i}$.
\end{nota}

\begin{defn}
A \emph{quantum edge} on $M_n$ is a minimal projection $\tilde P\in M_n(\C)\otimes M_n(\C)$. We say that
\begin{enumerate}
\item it is \emph{symmetric} if $(M_n,\tilde P)$ is undirected,
\item it contains \emph{no loop} if $(M_n,\tilde P)$ has no loop, i.e.\ $\tilde P\tilde I=0$,
\end{enumerate}
\end{defn}

\begin{rem}
Every quantum edge on $M_n$ contains $n^2$ edges in the sense that
$$(\eta^\dag\otimes\eta^\dag)\tilde P=n^2\Tr\tilde P=n^2.$$
\end{rem}

\begin{prop}\label{P.edge}
The quantum edges on $M_n$ are in a one-to-one correspondence with elements $\xi\in\glin_n$ up to normalization. In addition, the edge
\begin{itemize}
\item contains no loop if and only if $\Tr\xi=0$, i.e. $\xi\in\slin_n$,
\item is symmetric if and only if (up to normalization) $\xi=\xi^\dag$, i.e.\ $\im\xi\in\un_n$,
\item so, it is both symmetric and contains no loop if and only if (up to normalization) $\xi\in\su_n$.
\end{itemize}
The edge projection is given by $\tilde P=\frac{1}{\tilde\xi^*\tilde\xi}\tilde\xi\tilde\xi^*$. Its adjacency matrix is then given by $P=\frac{n}{\Tr(\xi^\dag\xi)}\xi\otimes\xi^*$ (identifying $l^2(X)\simeq\C^n\otimes\C^n$).
\end{prop}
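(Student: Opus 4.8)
The plan is to exploit the rotation correspondence between edge projections $\tilde P\in M_n(\C)\otimes M_n(\C)$, adjacency operators $P\colon l^2(M_n)\to l^2(M_n)$, and the identification $l^2(M_n)\simeq\C^n\otimes\C^n\simeq\glin_n$. A minimal projection on $\C^n\otimes\C^n$ is exactly a rank-one projection, so it has the form $\tilde P=\tilde\xi\tilde\xi^\dag/(\tilde\xi^\dag\tilde\xi)$ for a nonzero vector $\tilde\xi\in\C^n\otimes\C^n$ (here $\tilde\xi^\dag$ is the Riesz dual, which in coordinates is $\overline{\tilde\xi^{ij}}$; in the notation of the excerpt this is written $\tilde\xi^*$, since $\tilde\xi$ is viewed as a functional's conjugate — I will follow the paper's convention and write $\tilde P=\frac{1}{\tilde\xi^*\tilde\xi}\tilde\xi\tilde\xi^*$). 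The vector $\tilde\xi$ is determined up to a nonzero scalar, which is the ``up to normalization'' in the statement, and under the rescaling $\xi^i_j=\sqrt n\,\tilde\xi^{ij}$ this becomes an arbitrary nonzero $\xi\in\glin_n$. So the correspondence ``minimal projections $\leftrightarrow$ $\glin_n$ up to scalar'' is immediate from linear algebra once the rotation dictionary is in place; the real content is translating ``no loops'' and ``undirected'' into conditions on $\xi$.

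For the adjacency matrix formula, I would compute directly. By Example~\ref{E.MnAdj}, $P^{ij}_{kl}=n\tilde P^{ijkl}$, and $\tilde P^{ijkl}=\frac{1}{\tilde\xi^*\tilde\xi}\tilde\xi^{ij}\overline{\tilde\xi^{kl}}$. Rewriting $\tilde\xi^{ij}=\xi^i_j/\sqrt n$ and $\tilde\xi^*\tilde\xi=\sum_{ij}|\tilde\xi^{ij}|^2=\frac1n\sum_{ij}|\xi^i_j|^2=\frac1n\Tr(\xi^\dag\xi)$, one gets $P^{ij}_{kl}=\frac{n}{\Tr(\xi^\dag\xi)}\,\xi^i_j\,\overline{\xi^k_l}$, which is exactly the $(i,k)$-$(j,l)$ entry of $\xi\otimes\xi^*$ acting on $\C^n\otimes\C^n$ (the first leg carries $\xi$, the second carries $\xi^*=\bar\xi$, matching the fact that the second leg of $C(X)\otimes C(X)^{\rm op}$ was transposed). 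This is a routine index chase.

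Next, the loop condition: $\tilde P$ has no loop iff $\tilde P\tilde I=0$, where $\tilde I=R$, i.e.\ $\tilde I^{ijkl}=\frac1n R_{?}$— more cleanly, by Example~\ref{E.full}, $\tilde I$ is the rotation of $I=\id$, so in the $\glin_n$ picture $I$ corresponds (up to the normalization above) to the identity matrix $\id\in\glin_n$. The Schur/algebra product on $\Lin(l^2(M_n))$ corresponds under the transposed identification to the ordinary multiplication in $M_n(\C)\otimes M_n(\C)$, so $\tilde P\tilde I=0$ says the range of $\tilde P$ is orthogonal to the range of $\tilde I$ inside $\C^n\otimes\C^n$; the range of $\tilde I$ is spanned by the rotated identity, which is the vector $\sum_k e_k\otimes e_k$ (up to scalar). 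Hence $\tilde P\tilde I=0$ iff $\langle\tilde\xi,\sum_k e_k\otimes e_k\rangle=0$ iff $\sum_k \tilde\xi^{kk}=0$ iff $\Tr\xi=0$, i.e.\ $\xi\in\slin_n$. I should double-check the exact form of $\tilde I$ and whether $\tilde P\tilde I=0$ really reduces to orthogonality of ranges — since both are positive operators (products of projections), $\tilde P\tilde I=0\iff\tilde I\tilde P=0\iff\operatorname{ran}\tilde P\perp\operatorname{ran}\tilde I$, which is standard.

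Finally, the symmetry condition: $(M_n,\tilde P)$ is undirected iff $P=P^\dag$ (Proposition on the equivalent definition), and under $P=\frac{n}{\Tr(\xi^\dag\xi)}\xi\otimes\xi^*$ the adjoint is $\frac{n}{\Tr(\xi^\dag\xi)}\xi^\dag\otimes(\xi^*)^\dag=\frac{n}{\Tr(\xi^\dag\xi)}\xi^\dag\otimes\overline{\xi^\dag}$. So $P=P^\dag$ iff $\xi\otimes\bar\xi=\xi^\dag\otimes\overline{\xi^\dag}$ as operators on $\C^n\otimes\C^n$, and since these are elementary tensors of a matrix with its own conjugate, this forces $\xi^\dag=\lambda\xi$ for some scalar $\lambda$; taking $\dag$ again gives $|\lambda|=1$, and then $\lambda=\pm1$ is not forced but one can absorb a unit scalar into the normalization / or note $\xi^\dag=\lambda\xi\Rightarrow$ replacing $\xi$ by $e^{i\theta}\xi$ makes it self-adjoint — so ``up to normalization $\xi=\xi^\dag$'', equivalently $\im\xi$ skew-adjoint, i.e.\ $\im\xi\in\un_n$. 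Combining with $\Tr\xi=0$ gives $\im\xi\in\su_n$. I expect the main obstacle to be purely bookkeeping: keeping the transpose-on-the-second-leg convention, the $\sqrt n$ and $n$ normalization factors, and the distinction between $*$, $\dag$, and complex conjugation consistent throughout, especially in pinning down exactly which vector represents $\tilde I$ and verifying the scalar in the projection formula. Once those conventions are fixed the argument is a short computation.
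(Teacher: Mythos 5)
Your proposal is correct and follows essentially the same route as the paper: minimal projections are rank one, hence given by a vector $\tilde\xi$ up to scalar, the rotation yields $P=\frac{n}{\Tr(\xi^\dag\xi)}\,\xi\otimes\xi^*$, the loop condition reduces to $\Tr\xi=0$ (the paper does this by the index computation $[\tilde I\tilde\xi]^{ij}=n^{-3/2}\delta_{ij}\Tr\xi$ rather than via orthogonality of ranges, but the two are equivalent), and undirectedness reduces to $\xi=\xi^\dag$ after absorbing a phase. The one bookkeeping point to fix is the four-index convention in the intermediate step: with the paper's convention one has $\tilde P^{ijkl}=\tilde\xi^{ik}\overline{\tilde\xi^{jl}}$ (not $\tilde\xi^{ij}\overline{\tilde\xi^{kl}}$), which is exactly what makes $P^{ij}_{kl}=n\tilde P^{ijkl}=\xi^i_k\overline{\xi^j_l}=[\xi\otimes\xi^*]^{ij}_{kl}$ come out literally, whereas your pairing silently permutes the legs before arriving at the same (correct) final formula.
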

\begin{proof}
A quantum edge on $M_n$ is by definition given by a minimal projection $\tilde P$ in $M_n(\C)\otimes M_n(\C)$. Such a projection is determined by a vector $\tilde\xi\in\C^n\otimes\C^n$ by $\tilde P=\frac{1}{\|\tilde\xi\|^2}\tilde\xi\tilde\xi^*$, where $\|\tilde\xi\|^2=\sum_{ij}|\tilde\xi^{ij}|^2=\tilde\xi^*\tilde\xi$. On the other hand, this vector is defined by the projection $\tilde P$ uniquely up to normalization.

Such bivectors $\tilde\xi$ are of course in a one-to-one correspondence with elements $\xi\in\glin_n$. The norm of $\tilde\xi$ is equivalently computed as $\|\tilde\xi\|=\frac{1}{n}\Tr(\xi^\dag\xi)$. Let us assume for simplicity $\|\tilde\xi\|=1$.

On the other hand, $\tilde P$ is alternatively described by the rotated version $P\colon l^2(X_n)\to l^2(X_n)$. Let us compute its elements explicitly:
$$P^{ij}_{kl}=n\tilde P^{ijkl}=n\,\tilde\xi^{ik}\overline{\tilde\xi^{jl}}=\xi^i_k\overline{\xi^j_l}=[\xi\otimes\xi^*]^{ij}_{kl}.$$

It remains to reformulate the condition on containing no loop and being symmetric in terms of $\xi$. The quantum edge contains no loop if and only if $0=\tilde I\cdot\tilde P=\tilde I\tilde\xi\tilde\xi^*$, which holds if and only if $\tilde I\tilde\xi=0$, which means
$$0=[\tilde I\tilde\xi]^{ij}=\sum_{k,l}\tilde I^{ikjl}\tilde\xi^{kl}=n^{-3/2}\sum_{k,l}\delta_{ij}\delta_{kl}\xi^k_l=n^{-3/2}\delta_{ij}\Tr\xi.$$

The quantum edge is symmetric if and only if $P=P^\dag$, which holds obviously if and only if $\xi=\xi^\dag$ up to normalization.
\end{proof}

\begin{rem}
The projection $\tilde I$ corresponding to the empty graph with a loop at every vertex is minimal. Indeed, its rotated version $I$ can be written as $\iota\otimes\iota$, where $\iota\in\glin_n$ is the identity matrix, so by Proposition~\ref{P.edge}, $\tilde I$ is a quantum edge.

This means that there is a unique \emph{pure loop} on $M_n$ -- namely the quantum edge given by $\tilde I$. Nevertheless, one might still construct quantum edges $\tilde P$ on $M_n$, which are \emph{partial loops}. That is, $\tilde P$ as a projection neither equals $\tilde I$ nor it is orthogonal to~$\tilde I$.
\end{rem}

\begin{thm}\label{T}
Consider $m\in\{0,1,\dots,n^2\}$.
Quantum graphs on $M_n$ with $m$ quantum edges are in a one-to-one correspondence with $m$-dimensional subspaces $V\subset\glin_n$. The adjacency matrix is given by $A_V=\sum_{s=1}^m\xi_s\otimes\xi_s^*$, where $(\xi_1,\dots,\xi_m)$ is some orthonormal basis of $V$.

The quantum graph has no loops if and only if $V\subset\slin_n$. It is undirected if and only if $V=V^\dag$.

Consider two such graphs corresponding to $V,W\subset\glin_n$. They satisfy $\tilde A_V\le\tilde A_W$ if and only if $V\subset W$. They satisfy $(M_n,\tilde A_V)\simeq(M_n,\tilde A_W)$ if and only if $W=UVU^\dag$ for $U\in SU_n$.
\end{thm}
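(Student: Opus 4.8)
The strategy is to assemble the theorem from the single–quantum–edge analysis in Proposition~\ref{P.edge} together with the structural characterizations already proven (Proposition on the $V$-description, Corollary~\ref{C.Flaw}, Proposition~\ref{P.Aut}). First I would set up the correspondence. Given a projection $\tilde A\in M_n(\C)\otimes M_n(\C)$ of rank $m$, let $V=\operatorname{ran}\tilde A\subset\C^n\otimes\C^n$, and transport it through the rotation $\tilde\xi\mapsto\xi$, $\xi^i_j=\sqrt n\,\tilde\xi^{ij}$, to an $m$-dimensional subspace $V\subset\glin_n$; conversely a subspace determines the orthogonal projection onto it. Since the rotation $\tilde\xi\mapsto\xi$ is (up to the scalar $\sqrt n$, which cancels in forming the projection) a unitary $\C^n\otimes\C^n\to\glin_n$ with the Hilbert--Schmidt inner product, this is a bijection between rank-$m$ projections and $m$-dimensional subspaces. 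The number of quantum edges of $(M_n,\tilde A)$ is by definition the number of minimal subprojections in a decomposition of $\tilde A$, which is exactly $\operatorname{rank}\tilde A=\dim V$.

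Next I would compute the adjacency matrix. Pick an orthonormal basis $(\xi_1,\dots,\xi_m)$ of $V$; then $\tilde A=\sum_s\tilde\xi_s\tilde\xi_s^*$, and by the computation in the proof of Proposition~\ref{P.edge} (which is linear in the rank-one pieces) the rotated operator is $A_V=\sum_{s=1}^m\xi_s\otimes\xi_s^*$ under $l^2(M_n)\simeq\C^n\otimes\C^n$. One should remark that this expression is basis-independent: changing the orthonormal basis by a unitary $u\in U_m$ sends $\sum_s\xi_s\otimes\xi_s^*$ to itself because $\sum_{s,t}u_{st}\bar u_{st'}\xi_t\otimes\xi_{t'}^*=\sum_t\xi_t\otimes\xi_t^*$. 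The no-loops and undirectedness criteria then follow from the single-edge case applied ``spanwise'': $A_V\bullet I=\sum_s(\Tr\xi_s)\,(\text{edge})$-type terms — more cleanly, $(M_n,\tilde A)$ has no loops iff $\tilde A\tilde I=0$ iff $\tilde I\tilde\xi=0$ for all $\tilde\xi\in V$ iff $\Tr\xi=0$ for all $\xi\in V$ (this is exactly the displayed identity $[\tilde I\tilde\xi]^{ij}=n^{-3/2}\delta_{ij}\Tr\xi$ from the proof of Proposition~\ref{P.edge}), i.e.\ $V\subset\slin_n$. For undirectedness, $A=A^\dag$ iff $\sum_s\xi_s\otimes\xi_s^*=\sum_s\xi_s^\dag\otimes(\xi_s^\dag)^*$; since the left side is the rotation of $\tilde A$ and the right side is the rotation of $T_{\crosspart}(\tilde A)$ (by the proof of the ``Equivalent definition'' proposition, $\widetilde{A^\dag}=T_{\crosspart}(\tilde A)$), and both are projections onto $V$ resp.\ $V^\dag$, this holds iff $V=V^\dag$.

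For the comparison statements: $\tilde A_V\le\tilde A_W$ as projections iff $\operatorname{ran}\tilde A_V\subset\operatorname{ran}\tilde A_W$, i.e.\ $V\subset W$ — immediate from the bijection, and this is just the specialization of Definition~\ref{D.Esubgraph}. For the isomorphism statement, recall from Proposition~\ref{P.Aut} that $\Aut M_n=PU_n$, with an automorphism $\phi_U\colon x\mapsto UxU^{-1}$, $U\in U_n$ (and since $SU_n\to PU_n$ is onto we may take $U\in SU_n$). I would check how $\phi_U$ acts after rotation: $\phi_U\otimes\phi_U^{\mathrm{op}}$ on $C(X)\otimes C(X)^{\rm op}$ corresponds, via transposition of the second leg, to $x\otimes y\mapsto UxU^{-1}\otimes \bar U y\bar U^{-1}$ (transposition turns $U(\cdot)U^{-1}$ into $U^{-T}(\cdot)U^{T}=\bar U(\cdot)\bar U^{-1}$ using unitarity), hence on $\tilde\xi\in\C^n\otimes\C^n\simeq\glin_n$ it acts as $\xi\mapsto U\xi U^\dag$ — indeed $A_X\mapsto \phi A_X\phi^{-1}$ becomes $\sum_s U\xi_s U^\dag\otimes(U\xi_sU^\dag)^*$, which is $A_{UVU^\dag}$. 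Therefore $(M_n,\tilde A_V)\simeq(M_n,\tilde A_W)$ iff some $*$-isomorphism sends $A_V$ to $A_W$ iff $W=UVU^\dag$ for some $U\in SU_n$. I expect the main obstacle to be the careful bookkeeping of the transposition convention on the second leg (Remark~\ref{R.op}) in the automorphism computation — getting $U^\dag$ rather than $U^{-1}$ or $\bar U$ on the right — and making the ``number of quantum edges $=\operatorname{rank}$'' identification precise enough that the count is well defined; both are routine once the conventions are pinned down. Everything else is a direct transport of Proposition~\ref{P.edge} through the linear bijection.
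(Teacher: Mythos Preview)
Your proposal is correct and follows essentially the same route as the paper: both build the correspondence from Proposition~\ref{P.edge} by identifying the range of the rank-$m$ projection $\tilde A$ with an $m$-dimensional $V\subset\glin_n$ via the rotation, read off the no-loops and undirectedness criteria from the single-edge case, note that $\tilde A_V\le\tilde A_W$ is range inclusion, and handle the isomorphism statement via Proposition~\ref{P.Aut}. The only cosmetic difference is in the last step, where the paper computes $\phi\circ A_V\circ\phi^{-1}$ directly in indices on the adjacency side to obtain $\sum_s(U\xi_sU^\dag)\otimes(U\xi_sU^\dag)^*$, whereas you pass through the action of $\phi_U\otimes\phi_U^{\rm op}$ on the edge projection (with the transposition on the second leg) before rotating---the bookkeeping you flagged as the main obstacle is exactly where the two write-ups diverge in presentation, and both yield $W=UVU^\dag$.
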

\begin{proof}
Most of the theorem follows already from Proposition~\ref{P.edge}. By a graph with $m$ quantum edges, we mean that the associated projection $\tilde A_V$ has dimension $m$, that is, it is projecting on an $m$-dimensional vector space $\tilde V\subset \C^n\otimes\C^n$. We denote $V=\{\xi\mid\tilde\xi\in\tilde V\}\subset\glin_n$. If $(\xi_1,\dots,\xi_m)$ is a basis of $V$, so $(\tilde\xi_1,\dots,\tilde\xi_m)$ is a basis of $\tilde V$, then obviously $\tilde A_V=\sum_i\tilde P_s$, where $\tilde P_s=\tilde\xi_s\tilde\xi_s^*$ are the corresponding one-dimensional projections. As we showed in the proof of Proposition~\ref{P.edge}, this means that $P_s=\xi_s\otimes\xi_s^*$, so $A_V=\sum_i\xi_s\otimes\xi_s^*$.

The graph $(M_n,\tilde A_V)$ has no loops, i.e. $A_V\bullet I=0$ if and only if $P_s\bullet I=0$ for every $i$. In Proposition~\ref{P.edge}, we showed that this is equivalent to $\Tr\xi_s=0$. The graph is undirected if and only if $A_V=A_V^\dag=\sum\xi_s^\dag\otimes\xi_s^{*\dag}$, which is equivalent to saying that $(\xi_1^\dag,\dots,\xi_m^\dag)$ is a basis of $V$, so $V=V^\dag$.

The fact that $\tilde A_V\le\tilde A_W$ if and only if $V\subset W$ is obvious.

Finally, we have $(M_n,\tilde A_V)\simeq(M_n,\tilde A_W)$ if and only if there is an automorphism $M_n\to M_n$ with underlying $*$-automorphism $\phi\colon M_n(\C)\to M_n(\C)$ such that $A_W=\phi\circ A_V\circ\phi^{-1}$. In the proof of Proposition~\ref{P.Aut}, we mentioned that any $*$-automorphism of $M_n(\C)$ is given by conjugating with a unitary (or equivalently a special unitary) matrix. Expressing this in entries, the automorphism acts on $x\in M_n(\C)$ by $[\phi(x)]^{ij}=\sum_{k,l}U^i_kx^{kl}\overline{U^j_l}$. In other words, we can say that the entries of $\phi$ are $\phi^{ij}_{kl}=U^i_k\overline{U^j_l}$. Obviously, $[\phi^{-1}]^{ij}_{kl}=\overline{U^k_i}U^l_j$ So, taking $A_V=\sum_a \xi_a\otimes\xi_a^*$, we get
\begin{align*}
[\phi\circ A_V\circ\phi^{-1}]^{ij}_{kl}&=\sum_{a,b,c,d}U^i_a\overline{U^j_b}[A_V]^{ab}_{cd}\overline{U_c^k}U_d^l=\sum_{a,b,c,d}\sum_s U^i_a[\xi_s]^a_c\overline{U_c^k}\,\overline{U^j_b[\xi_s]^b_d\overline{U^l_d}}\\
&=\sum_s[U\xi_s U^\dag]^i_k[(U\xi_sU^\dag)^*]^j_l.
\end{align*}
That is, $(M_n,\tilde A_V)\simeq(M_n,\tilde A_W)$ if and only if $W$ is spanned by $U\xi_s U^\dag$, so $W=UVU^\dag$.
\end{proof}

\begin{lem}
Any vector subspace $V\subset\glin_n$ with $V=V^\dag$ has a self-adjoint basis $(\xi_s)$, $\xi_s=\xi_s^\dag$.
\end{lem}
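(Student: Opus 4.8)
The plan is to exploit the conjugate-linear involution $\xi \mapsto \xi^\dag$ on $\glin_n$, whose fixed points are exactly the self-adjoint matrices (a real form of $\glin_n$), together with the hypothesis $V = V^\dag$ which says this involution restricts to $V$. The standard trick is to decompose $V$ into the $\pm 1$-eigenspaces of the real-linear involution it carries, and observe that those eigenspaces are just the (real) self-adjoint and anti-self-adjoint parts of $V$; since every element of $V$ splits as a sum of a self-adjoint and an anti-self-adjoint element, and multiplication by $\im$ turns anti-self-adjoint into self-adjoint, one can manufacture a self-adjoint $\C$-spanning set and then thin it to a basis.

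Concretely, first I would set $V_{\mathrm{sa}} = \{\xi \in V \mid \xi = \xi^\dag\}$, which is a real vector subspace of $V$ (it is closed under real scalars and addition, but not under multiplication by $\im$). The key observation is that for any $\xi \in V$ we have $\xi^\dag \in V$ by hypothesis, hence $\frac12(\xi + \xi^\dag) \in V_{\mathrm{sa}}$ and $\frac{1}{2\im}(\xi - \xi^\dag) \in V_{\mathrm{sa}}$ as well (the latter because $(\frac{1}{2\im}(\xi-\xi^\dag))^\dag = -\frac{1}{2\im}(\xi^\dag - \xi) = \frac{1}{2\im}(\xi - \xi^\dag)$). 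Therefore $\xi = \frac12(\xi+\xi^\dag) + \im\cdot\frac{1}{2\im}(\xi-\xi^\dag)$ exhibits every element of $V$ as a $\C$-linear combination of elements of $V_{\mathrm{sa}}$; that is, $V = \spanlin_\C V_{\mathrm{sa}}$. Consequently $V_{\mathrm{sa}}$, as a real subspace, has real dimension at least $\dim_\C V$, and since it contains no $\C$-line (if $\xi$ and $\im\xi$ were both self-adjoint for $\xi \neq 0$ we would get $\xi = \xi^\dag = (\im\xi)^\dag/\im$ forcing $\im\xi = -\im\xi$, a contradiction), a real basis of $V_{\mathrm{sa}}$ is $\R$-linearly independent in $V$ and spans $V$ over $\C$, hence must be a $\C$-basis of $V$ of size $\dim_\C V$. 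Each of its members is self-adjoint by construction, giving the desired basis $(\xi_s)$.

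I do not expect any serious obstacle here: the only thing to be a little careful about is the bookkeeping between real and complex dimensions, i.e.\ arguing that an $\R$-basis of $V_{\mathrm{sa}}$ is actually a $\C$-basis of $V$ rather than merely a spanning set of the correct cardinality — but this follows immediately from the two facts just noted (it $\C$-spans $V$, and it stays $\R$-independent, hence $\C$-independent, because $V_{\mathrm{sa}} \cap \im V_{\mathrm{sa}} = 0$). Alternatively, one can phrase the whole argument in one line by noting that $(\glin_n, \dag)$ is the complexification of the real Lie algebra $\un_n$ of self-adjoint... rather, of $\im\un_n$, and a $\dag$-stable complex subspace is automatically the complexification of its real part; but the elementary splitting argument above is cleaner and self-contained.
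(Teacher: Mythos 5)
Your proof is correct, and the key mechanism is the same one the paper uses: the decomposition $\xi=\tfrac12(\xi+\xi^\dag)+\im\cdot\tfrac{1}{2\im}(\xi-\xi^\dag)$ into two self-adjoint elements of $V$. The difference is organizational. The paper builds the basis greedily by induction, at each step picking $\xi$ outside the span $V_1$ of the self-adjoint vectors already found and adjoining its real and/or imaginary part; you instead argue globally that $V$ is the complexification of its real form $V_{\mathrm{sa}}=\{\xi\in V\mid\xi=\xi^\dag\}$, combining $V=\spanlin_\C V_{\mathrm{sa}}$ with $V_{\mathrm{sa}}\cap\im V_{\mathrm{sa}}=0$ to promote an $\R$-basis of $V_{\mathrm{sa}}$ to a $\C$-basis of $V$. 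Your packaging is arguably cleaner: the paper's induction asserts that the real and imaginary parts of $\xi\in V_2$ (a chosen complement of $V_1$) again lie in $V_2$, which is not literally true since $V_2$ need not be $\dag$-stable --- one only knows they lie in $V$ and that at least one lies outside $V_1$; your formulation avoids this bookkeeping, and it simultaneously delivers the content of the remark that follows the lemma in the paper (that $V$ is determined by a real subspace of self-adjoint matrices). One cosmetic slip: in your parenthetical ruling out a $\C$-line, the correct chain is $\im\xi=(\im\xi)^\dag=-\im\xi^\dag=-\im\xi$, forcing $\xi=0$; your displayed intermediate step is garbled but the conclusion, and the cleaner statement $V_{\mathrm{sa}}\cap\im V_{\mathrm{sa}}=0$ you actually rely on, are both fine.
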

\begin{proof}
We construct the basis by induction. Suppose we already have a basis $(\xi_s)$ of a subspace $V_1\subset V$ with $\xi_s=\xi_s^\dag$. This in particular means that $V_1=V_1^\dag$. Denote by $V_2$ some vector space complement of $V_1$ to $V$. Take any element $\xi\in V_2$ and construct its real and imaginary part $(\xi+\xi^\dag)/2$ and $\im(\xi^\dag-\xi)/2$, which are both self-adjoint and both elements of $V_2$. It may or may not happen that those are linearly independent, but definitely at least one of these is non-zero. So, include both (if they are linearly independent) or one of them (if they are not) to our basis $(\xi_s)$ and repeat.
\end{proof}

\begin{rem}
As a consequence of the lemma, any undirected graph on $M_n$ can be thought of as being made from symmetric quantum edges only.
\end{rem}

\begin{rem}\label{R.real}
A complex vector space with $V=V^\dag\subset\glin_n$ uniquely determines a real vector space spanned by the basis $(\xi_s)$ of self adjoint elements $\xi_s=\xi_s^\dag$. Indeed, it is easy to see that any other self-adjoint element $\xi=\xi^\dag$ must be a real linear combination of $(\xi_s)$. Equivalently, $V$ is uniquely described by a real vector subspace of the Lie algebra $\un_n$.
\end{rem}

\subsection{Special case $n=2$}
\label{secc.smallclass}

In this section, we look at the special case $n=2$. We provide an explicit classification of simple graphs over $M_2$. Note that this result was obtained independently by Matsuda in \cite{Mat21}. The result of Matsuda is actually more general since he also classifies quantum graphs over $M_2$ with respect to non-tracial $\delta$-forms. We also mention the classification for undirected quantum graphs dropping the condition of {\em no loops}.

\begin{thm}\label{T2}
A simple graph on $M_2$ is determined up to isomorphism only by the number of quantum edges $m\in\{0,1,2,3\}$.
\end{thm}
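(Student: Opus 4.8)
The plan is to translate the statement, via Theorem~\ref{T}, into an elementary fact about subspaces of $\R^3$ under the rotation group. By Theorem~\ref{T}, a simple quantum graph on $M_2$ is precisely a subspace $V\subset\slin_2$ with $V=V^\dag$; having $m$ quantum edges means $\dim_\C V=m$, and since $\dim_\C\slin_2=3$ this forces $m\in\{0,1,2,3\}$. Two such graphs corresponding to $V,W$ are isomorphic if and only if $W=UVU^\dag$ for some $U\in SU_2$. Using the lemma producing a self-adjoint basis (equivalently Remark~\ref{R.real}), I would replace $V$ by the real span $V_\R$ of such a basis; the condition $V\subset\slin_2$ then puts $V_\R$ inside the real $3$-dimensional space $H$ of traceless self-adjoint $2\times2$ matrices (spanned by the Pauli matrices), and one records that $\dim_\R V_\R=\dim_\C V=m$, so that ``$m$ quantum edges'' genuinely corresponds to ``$m$-dimensional real subspace of $H$''. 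The conjugation action $\xi\mapsto U\xi U^\dag$ restricts to a linear action of $SU_2$ on $H$.

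The next step is to identify this action. The form $\langle\xi,\eta\rangle=\Tr(\xi\eta)$ is an $SU_2$-invariant inner product on $H$, so conjugation defines a homomorphism $SU_2\to SO(H)\cong SO_3$. This is the familiar double cover; in particular it is surjective. One may also see surjectivity by hand: conjugation by diagonal and by suitably chosen unitaries rotates the Pauli matrices within coordinate planes, and such rotations generate $SO_3$. Consequently, two subspaces of $H$ are $SU_2$-equivalent if and only if they lie in the same $SO_3$-orbit in the Grassmannian of $m$-planes in $\R^3$.

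It then remains to observe that $SO_3$ acts transitively on the set of $m$-dimensional subspaces of $\R^3$ for each $m\in\{0,1,2,3\}$. For $m=0$ and $m=3$ there is nothing to prove (a unique subspace in each case). For $m=1$ this is transitivity of $SO_3$ on lines through the origin, which follows from transitivity on the unit sphere. For $m=2$ it follows by taking orthogonal complements: every $2$-plane is the orthocomplement of a line, orthocomplementation is $SO_3$-equivariant, and $SO_3$ is transitive on lines. Hence in each of the four cases there is exactly one isomorphism class, so a simple graph on $M_2$ is determined up to isomorphism by $m$ alone.

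There is essentially no hard step; the only point requiring a little care is the clean passage from the complex picture ``$V=V^\dag\subset\slin_2$'' of Theorem~\ref{T} to the real picture inside $H\cong\R^3$, together with the matching of complex and real dimensions. After that, the conclusion is immediate from the transitivity of $SO_3$ on the relevant Grassmannians.
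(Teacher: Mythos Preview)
Your proof is correct and follows essentially the same route as the paper: reduce via Theorem~\ref{T} and Remark~\ref{R.real} to real $m$-dimensional subspaces of the $3$-dimensional space of traceless self-adjoint $2\times2$ matrices, identify the $SU_2$-conjugation action with the rotation action of $SO_3$ on $\R^3$, and conclude by transitivity of $SO_3$ on $m$-planes. Your treatment is slightly more explicit about the transitivity step and the real-versus-complex dimension matching, but there is no substantive difference in strategy.
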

\begin{proof}
Note first that we assume that the graph is simple, so there is no loop, so there can indeed be at most $2^2-1=3$ quantum edges.

By Theorem~\ref{T}, a simple graph on $M_2$ with $m$ quantum edges is determined by an $m$-dimensional subspace $V\subset\slin_2$ with $V=V^\dag$. By Remark~\ref{R.real}, we can consider $V$ to be a real vector space. We can equivalently also write $\im V\subset\su_2$. There is the famous basis of $\su_2$ by Pauli matrices. Expressing everything in this basis, we can also think of $V$ as a subspace $V\subset\R^3$.

By Theorem~\ref{T}, two such vector spaces $V,W$ describe isomorphic graphs if and only if they are conjugated by a matrix $U\in SU_2$. It is well known that the conjugation action of $SU_2$ on $\su_2$ is realized in the basis of Pauli matrices as rotations. In fact, $PU_2\simeq SO_3$.

So, two quantum graphs corresponding to subspaces $V,W\subset\R^3$ are isomorphic if and only if $V$ is a rotation of $W$. But this obviously holds if and only if $\dim V=\dim W$.
\end{proof}

\begin{rem}
As mentioned in \cite{Mat21}, the graphs are actually $m$-regular.
\end{rem}

\begin{ex}\label{E.2}
Let us compute explicitly the adjacency matrices corresponding to the graphs from Theorem~\ref{T2}.

For $m=0$, we of course have the empty graph, so $A=0$. Likewise, for $m=3$, we have the full graph without loops, so $A=J-I$. The only interesting cases are therefore $m=1,2$.

Recall the explicit form of the Pauli matrices
$$
\sigma_1=
\begin{pmatrix}
0&1\\
1&0
\end{pmatrix}
,\qquad
\sigma_2=
\begin{pmatrix}
0&-\im\\
\im&0
\end{pmatrix}
,\qquad
\sigma_3=
\begin{pmatrix}
1&0\\
0&-1
\end{pmatrix}.
$$

Note that they are correctly normalized, that is, $\Tr(\sigma_i^\dag\sigma_i)=2=n$. We can also write them as orthonormal vectors:
$$
\tilde\sigma_1=
\frac{1}{\sqrt{2}}
\begin{pmatrix}
0\\1\\1\\0
\end{pmatrix}
,\qquad
\tilde\sigma_2=
\frac{1}{\sqrt{2}}
\begin{pmatrix}
0\\-\im\\\im\\0
\end{pmatrix}
,\qquad
\tilde\sigma_3=
\frac{1}{\sqrt{2}}
\begin{pmatrix}
1\\0\\0\\-1
\end{pmatrix}.
$$

For $m=1$, we can choose $V=\spanlin\{\sigma_1\}$, so the adjacency matrix is given by
$$A=P_1:=\sigma_1\otimes\sigma_1^*=
\begin{pmatrix}
0&0&0&1\\
0&0&1&0\\
0&1&0&0\\
1&0&0&0
\end{pmatrix}.
$$

By this, we mean that $A\colon l^2(X_2)\to l^2(X_2)$ maps
\begin{align*}
 \begin{pmatrix}1&0\\0&0\end{pmatrix}\mapsto\begin{pmatrix}0&0\\0&1\end{pmatrix},\qquad
&\begin{pmatrix}0&1\\0&0\end{pmatrix}\mapsto\begin{pmatrix}0&0\\1&0\end{pmatrix},\cr
 \begin{pmatrix}0&0\\1&0\end{pmatrix}\mapsto\begin{pmatrix}0&1\\0&0\end{pmatrix},\qquad
&\begin{pmatrix}0&0\\0&1\end{pmatrix}\mapsto\begin{pmatrix}1&0\\0&0\end{pmatrix}.
\end{align*}

Alternatively, we can compute the edge projection as
$$
\tilde A=\tilde P_1=\tilde\sigma_1\tilde\sigma_1^*=
\frac{1}{2}
\begin{pmatrix}
\begin{pmatrix}0&0\\0&1\end{pmatrix}&
\begin{pmatrix}0&0\\1&0\end{pmatrix}\\
\begin{pmatrix}0&1\\0&0\end{pmatrix}&
\begin{pmatrix}1&0\\0&0\end{pmatrix}
\end{pmatrix}.
$$

We can also take any other traceless self-adjoint matrix to generate a simple quantum graph with one edge. For instance, choosing $V=\spanlin\{\sigma_2\}$, resp. $V=\spanlin\{\sigma_3\}$, we obtain $A=P_2$ or $A=P_3$, where
\begin{align*}
P_2&=\sigma_2\otimes\sigma_2^*=
\begin{pmatrix}
0&0&0&1\\
0&0&-1&0\\
0&-1&0&0\\
1&0&0&0
\end{pmatrix}
&\leftrightarrow&&
\tilde P_2&=\tilde\sigma_2\tilde\sigma_2^*
=\frac{1}{2}
\begin{pmatrix}
0&0&0&0\\
0&1&-1&0\\
0&-1&1&0\\
0&0&0&0
\end{pmatrix}
\\
P_3&=\sigma_3\otimes\sigma_3^*=
\begin{pmatrix}
1&0&0&0\\
0&-1&0&0\\
0&0&-1&0\\
0&0&0&1
\end{pmatrix}
&\leftrightarrow&&
\tilde P_3&=\tilde\sigma_3\tilde\sigma_3^*
=\frac{1}{2}
\begin{pmatrix}
1&0&0&-1\\
0&0&0&0\\
0&0&0&0\\
-1&0&0&1
\end{pmatrix}.
\end{align*}

A quantum graph with two edges can be constructed simply by summing the adjacency matrices of two graphs with one edge (the vectors corresponding to the two edges should be mutually orthogonal). For example, we can take $A=P_1+P_2$. Actually, in Example~\ref{E.square}, we already presented such an example. In that case, we chose $A=P_1+P_3$.

Finally, as we already mentioned, taking three edges, we obtain the full graph without loops with adjacency matrix $A=P_1+P_2+P_3=J-I$.
\end{ex}

We can also generalize the result of Theorem~\ref{T2} by dropping the assumption that the graph has no loops. Of course, for any graph constructed in Theorem~\ref{T2}, we can define a new graph by adding the unique pure loop $\tilde I$. That is, we define $A'=A+I$. But apart from those trivial examples, there are infinitely many other graphs that contain the loop only partially.

\begin{prop}
\label{P.partialex}
For any $m=1,2,3$, there is an infinite family of mutually non-isomorphic undirected quantum graphs containing $m$ quantum edges. They can be chosen as follows:
\begin{itemize}
\item For $m=1$, we define $A(t)=P_3(t)$, $t\in[0,\pi/2]$.
\item For $m=2$, we define $A(t)=P_3(t)+P_2$, $t\in[0,\pi/2]$.
\item For $m=3$, we define $A(t)=P_3(t)+P_2+P_1$, $t\in[0,\pi/2]$.
\end{itemize}
Here, we denote $P_i=\sigma_i\otimes\sigma_i^*$ as in Example~\ref{E.2}. In addition, we define $P_3(t)=\sigma_3(t)\otimes\sigma_3(t)^*$, where $\sigma_3(t)=\sigma_3\sin t+\iota\cos t$.

Every other undirected quantum graph is isomorphic to one of these families (or it equals to the empty graph or the full graph).
\end{prop}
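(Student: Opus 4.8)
The plan is to translate the statement into a classification of real subspaces via Theorem~\ref{T} and then read everything off. By Theorem~\ref{T} together with Remark~\ref{R.real}, an undirected quantum graph on $M_2$ with $m$ quantum edges is exactly an $m$-dimensional real subspace $V$ of the space $H_2$ of Hermitian $2\times2$ matrices (the self-adjoint part of $\glin_2$), taken up to the action $V\mapsto UVU^\dag$ of $U\in SU_2$; its adjacency matrix is $A_V=\sum_{s=1}^m\xi_s\otimes\xi_s^*$ for any basis $(\xi_s)$ of $V$ with $\Tr(\xi_s^\dag\xi_t)=2\delta_{st}$. In the Pauli basis, $H_2=\R\iota\oplus\R^3$ with $\R^3=\spanlin_\R\{\sigma_1,\sigma_2,\sigma_3\}$; conjugation by $SU_2$ fixes $\iota$ and acts on $\R^3$ as the full rotation group $SO_3$ (the identification $PU_2\simeq SO_3$ used already for Theorem~\ref{T2}), and it is orthogonal for $\langle\xi,\zeta\rangle=\Tr(\xi^\dag\zeta)$. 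So the problem becomes: classify $m$-dimensional subspaces of $\R\iota\oplus\R^3$ up to the $SO_3$-action that fixes $\R\iota$ and rotates $\R^3$.

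First I would check that the stated families are legitimate. Since $\iota$ and the $\sigma_i$ are Hermitian, so is $\sigma_3(t)=\cos t\,\iota+\sin t\,\sigma_3$, and a short computation gives $\Tr(\sigma_3(t)^\dag\sigma_3(t))=2$ together with $\Tr(\sigma_3(t)^\dag\sigma_1)=\Tr(\sigma_3(t)^\dag\sigma_2)=0$. Hence for each $t$ the relevant generators form an orthonormal system spanning an $m$-dimensional self-adjoint $V(t)\subset H_2$, which by Theorem~\ref{T} defines an undirected quantum graph with $m$ quantum edges and adjacency matrix $A(t)$. The two exceptional graphs also come out of Theorem~\ref{T}: $m=0$ forces $V=0$, the empty graph $\tilde A=0$; and $m=4$ forces $V=H_2$, so $\tilde A_V$ is a rank-$4$ projection on the $4$-dimensional space $\C^2\otimes\C^2$, hence the identity $\tilde J$, the full graph with a loop at every vertex.

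The heart of the argument is showing that every $m$-dimensional $V\subset\R\iota\oplus\R^3$ with $m\in\{1,2,3\}$ is $SO_3$-conjugate to some $V(t)$, $t\in[0,\pi/2]$. I would split on $d:=\dim(V\cap\R^3)$; a dimension count gives $d\in\{m-1,m\}$. If $d=m$, then $V\subset\R^3$ is a $d$-plane, and since $SO_3$ acts transitively on $d$-planes in $\R^3$ we may take $V$ to be $\spanlin\{\sigma_3\}$, $\spanlin\{\sigma_2,\sigma_3\}$, or $\spanlin\{\sigma_1,\sigma_2,\sigma_3\}$, which is $V(\pi/2)$. If $d=m-1$, pick $v_0\in V\setminus\R^3$; it has nonzero $\iota$-component, and after replacing $v_0$ by $-v_0$ if necessary and subtracting an element of $V\cap\R^3$ we may write $v_0=a\iota+\vec v_0$ with $a>0$ and $\vec v_0\perp(V\cap\R^3)$ in $\R^3$. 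Using $SO_3$ one moves $V\cap\R^3$ to the coordinate subspace $\spanlin\{\sigma_2\}$ (for $m=2$) resp.\ $\spanlin\{\sigma_1,\sigma_2\}$ (for $m=3$; nothing is needed for $m=1$), then the residual rotations fixing that subspace send $\vec v_0$ onto the $\sigma_3$-axis, and a $\pi$-rotation about $\sigma_1$ fixes the subspace while allowing the sign of the $\sigma_3$-coefficient to be taken nonnegative. After rescaling, $v_0=\sigma_3(t)$ with $t=\arctan(|\vec v_0|/a)\in[0,\pi/2)$, so $V=V(t)$, with $t=0$ occurring exactly when $\iota\in V$. Together with the case $d=m$ ($t=\pi/2$), this covers every $V$. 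The main difficulty here is purely organizational: keeping the sign and normalization bookkeeping under control so that the single parameter interval $[0,\pi/2]$ is enough, and matching the two degenerate shapes of $V$ (contained in $\R^3$; or containing $\iota$) to the two endpoints.

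Finally, for pairwise non-isomorphism inside each family I would use a single isomorphism invariant: the operator trace $\Tr A$ of the adjacency matrix, viewed as an operator on $l^2(M_2)\simeq\C^2\otimes\C^2$, which is unchanged under conjugation by the unitary implementing a $*$-isomorphism. Since $\Tr(\xi\otimes\xi^*)=|\Tr\xi|^2$ and $\Tr\sigma_i=0$, we get $\Tr A(t)=|\Tr\sigma_3(t)|^2=4\cos^2t$ in all three families. As $t\mapsto4\cos^2t$ is a strictly decreasing bijection $[0,\pi/2]\to[0,4]$, the graphs $A(t)$ are pairwise non-isomorphic for distinct $t$. Combined with the classification above — every undirected quantum graph on $M_2$ gives a subspace $V$ of some dimension $m\in\{0,1,2,3,4\}$, covered by the three families for $m\in\{1,2,3\}$ and by the empty or full graph for $m\in\{0,4\}$ — this establishes all assertions of the proposition.
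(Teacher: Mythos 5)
Your proof is correct and follows the same route the paper intends: via Theorem~\ref{T} and Remark~\ref{R.real} reduce to classifying $m$-dimensional real subspaces of the Hermitian matrices $\R\iota\oplus\spanlin_\R\{\sigma_1,\sigma_2,\sigma_3\}$ under the $SO_3$-action fixing $\iota$, exactly as in the proof of Theorem~\ref{T2}. The paper leaves this as a one-line remark, whereas you carry out the normal-form reduction to $V(t)$ explicitly and supply the invariant $\Tr A(t)=4\cos^2 t$ to establish mutual non-isomorphism; both additions are sound and complete the argument.
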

To be more concrete, we can write down explicitly
$$P_3(t)=
\begin{pmatrix}
1+\sin(2t)&0&0&0\\
0&\cos(2t)&0&0\\
0&0&\cos(2t)&0\\
0&0&0&1-\sin(2t)
\end{pmatrix}
$$
For $t=0$, we get $I$, i.e.\ the pure loop. For $t=\pi/2$, we get $P_3$, i.e.\ no loop. For $t\in(0,\pi/2)$, we get something in between.
\begin{proof}
We just have to modify the proof of Theorem~\ref{T2}. Again, Theorem~\ref{T} tells us that all quantum graphs with $m$ edges are in a one-to-one correspondence to $m$-dimensional subspaces $V\subset\R^4$. 
\end{proof}

\section{Quantum symmetries and isomorphisms}
\label{sec.qsym}

\subsection{Compact matrix quantum groups}
\label{secc.qgdef}
A~\emph{compact matrix quantum group} is a pair $G=(A,u)$, where $A$ is a~$*$-algebra and $u=(u^i_j)\in M_N(A)$ is a matrix with values in $A$ such that
\begin{enumerate}
\item the elements $u^i_j$ $i,j=1,\dots, N$ generate $A$,
\item the matrices $u$ and $u^t$ ($u$ transposed) are similar to unitary matrices,
\item the map $\Delta\colon A\to A\otimes A$ defined as $\Delta(u^i_j):=\sum_{k=1}^N u^i_k\otimes u^k_j$ extends to a~$*$-homomorphism.
\end{enumerate}

Compact matrix quantum groups introduced by Woronowicz \cite{Wor87} are generalizations of compact matrix groups in the following sense. For a~matrix group $G\subseteq M_N(\C)$, we define $u^i_j\colon G\to\C$ to be the coordinate functions $u^i_j(g):=g^i_j$. Then we define the \emph{coordinate algebra} $A:=\Olg(G)$ to be the algebra generated by $u^i_j$. The pair $(A,u)$ then forms a compact matrix quantum group. The so-called \emph{comultiplication} $\Delta\colon \Olg(G)\to \Olg(G)\otimes \Olg(G)$ dualizes matrix multiplication on $G$: $\Delta(f)(g,h)=f(gh)$ for $f\in \Olg(G)$ and $g,h\in G$.

Therefore, for a~general compact matrix quantum group $G=(A,u)$, the algebra~$A$ should be seen as the algebra of non-commutative functions defined on some non-commutative compact underlying space. For this reason, we often denote $A=\Olg(G)$ even if $A$ is not commutative. The matrix $u$ is called the \emph{fundamental representation} of~$G$.

Actually, $\Olg(G)$ also has the structure of a~Hopf $*$-algebra. That is, there is a \emph{counit} $\epsilon\colon\Olg(G)\to\C$ mapping $u^i_j\mapsto\delta_{ij}$ (classical meaning is evaluating a function at identity, i.e.\ $\epsilon(f)=f(e)$) and an \emph{antipode} $S\colon\Olg(G)\to\Olg(G)$ mapping $u^i_j\mapsto [u^{-1}]^i_j$ (classical meaning is evaluating a function at the group inverse, i.e.\ $[S(f)](g)=f(g^{-1})$). In addition, we can also define the C*-algebra $C(G)$ as the universal C*-completion of $A$, which can be interpreted as the algebra of continuous functions of $G$.

A compact matrix quantum group $H=(\Olg(H),v)$ is a~\emph{quantum subgroup} of $G=(\Olg(G),u)$, denoted as $H\subseteq G$, if $u$ and $v$ have the same size and there is a~surjective $*$-homomorphism $\phi\colon \Olg(G)\to \Olg(H)$ sending $u^i_j\mapsto v^i_j$. We say that $G$ and $H$ are \emph{equal} if there exists such a $*$-isomorphism (i.e. if $G\subset H$ and $H\subset G$). We say that $G$ and $H$ are \emph{isomorphic} if there exists a $*$-isomorphism $\phi\colon \Olg(G)\to \Olg(H)$ such that $(\phi\otimes\phi)\circ\Delta_G=\Delta_H\circ\phi$. We will often use the notation $H\subseteq G$ also if $H$ is isomorphic to a subgroup of $G$.

One of the most important examples is the quantum generalization of the orthogonal group -- the \emph{free orthogonal quantum group} defined by Wang in \cite{Wan95free} through the universal $*$-algebra
$$\Olg(O_N^+):=\staralg(u^i_j,\;i,j=1,\dots,N\mid u^i_j=u^{i*}_j,uu^t=u^tu=\id_{\C^N}).$$
Note that this example was then further generalized by Banica~\cite{Ban96} into the \emph{universal free orthogonal quantum group} $O^+(F)$, where $F\in M_N(\C)$ such that $F\bar F=\pm\id_{\C^N}$ and
$$\Olg(O^+(F)):=\staralg(u^i_j,\;i,j=1,\dots,N\mid\text{$u$ is unitary, }u=F\bar uF^{-1}),$$
where $[\bar u]^i_j=u^{i*}_j$.

\subsection{Representation categories and Tannaka--Krein reconstruction}
\label{secc.Rep}

For a~compact matrix quantum group $G=(\Olg(G),u)$, we say that $v\in M_n(\Olg(G))$ is a~representation of $G$ if $\Delta(v^i_j)=\sum_{k}v^i_k\otimes v^k_j$, where $\Delta$ is the comultiplication. The representation $v$ is called \emph{unitary} if it is unitary as a~matrix, i.e. $\sum_k v^i_kv^{j*}_k=\sum_k v^{k*}_iv^k_j=\delta_{ij}$. In particular, an element $a\in \Olg(G)$ is a one-dimensional representation if $\Delta(a)=a\otimes a$. Another example of a quantum group representation is the fundamental representation $u$.

We say that a representation $v$ of $G$ is \emph{non-degenerate} if $v$ is invertible as a matrix. Equivalently, $v$ is non-degenerate if $\epsilon(v^i_j)=\delta_{ij}$. (In the classical group theory, we typically consider only non-degenerate representations; therefore, it is sometimes put as a requirement also in the quantum case.) It is \emph{faithful} if $\Olg(G)$ is generated by the entries of~$v$. The meaning of this notion is the same as with classical groups: Given a non-degenerate faithful representation $v$, the pair $G'=(\Olg(G),v)$ is also a compact matrix quantum group, which is isomorphic to the original $G$.


For two representations $v\in M_n(\Olg(G))$, $w\in M_m(\Olg(G))$ of $G$ we define the space of \emph{intertwiners}
$$\Mor(v,w)=\{T\colon \C^n\to\C^m\mid Tv=wT\}.$$
The set of all representations of a given quantum group together with those intertwiner spaces form a rigid monoidal $\dag$-category, which will be denoted by $\Rep G$.

Nevertheless, since we are working with compact \emph{matrix} quantum groups, it is more convenient to restrict our attention only to certain representations related to the fundamental representation. If we work with orthogonal quantum groups $G\subset O_n^+$ (or $G\subset O^+(F)$ in general), then it is enough to focus on the tensor powers $u^{\otimes k}$ since the entries of those representations already linearly span the whole $\Olg(G)$.

Considering $G=(\Olg(G),u)\subset O^+(F)$, $F\in M_N(\C)$, we define
$$\RCat_G(k,l):=\Mor(u^{\otimes k},u^{\otimes l})=\{T\colon (\C^N)^{\otimes k}\to(\C^N)^{\otimes l}\mid Tu^{\otimes k}=u^{\otimes l}T\}.$$

The collection of such linear spaces forms a rigid monoidal $\dag$-category with the monoid of objects being the natural numbers with zero $\N_0$.

Conversely, we can reconstruct any compact matrix quantum group from its representation category \cite{Wor88,Mal18}.

\begin{thm}[Woronowicz--Tannaka--Krein]
\label{T.TK}
Let $\RCat$ be a rigid monoidal $\dag$-category with $\N_0$ being the set of objects and $\RCat(k,l)\subset\Lin((\C^N)^{\otimes k},(\C^N)^{\otimes l})$. Then there exists a unique orthogonal compact matrix quantum group $G$ such that $\RCat=\RCat_G$. We have $G\subset O^+(F)$ with $F^j_i=R^{ij}$, where $R$ is the duality morphism of $\RCat$.
\end{thm}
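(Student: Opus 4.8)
The plan is to construct $G$ by generators and relations and then check that its representation category is exactly $\RCat$. Put $F^j_i:=R^{ij}$, where $R\in\RCat(0,2)$ is the duality morphism; after rescaling $R$ and using the zig-zag equations of a rigid $\dag$-category one may arrange $F\bar F=\pm\id_{\C^N}$, so that $O^+(F)$ makes sense. Let $\Olg(G)$ be the universal $*$-algebra generated by the entries $u^i_j$ of an $N\times N$ matrix $u$, subject to the relations $Tu^{\otimes k}=u^{\otimes l}T$ for all $k,l\ge 0$ and all $T\in\RCat(k,l)$; since $\RCat$ is a $\dag$-category, this system of relations is compatible with the $*$-structure. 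The relations coming from $R\in\RCat(0,2)$ and $R^\dag\in\RCat(2,0)$ are exactly the defining relations of $O^+(F)$, so $G\subset O^+(F)$ by construction, and in particular $u$ and $u^t$ are similar to unitaries.

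First I would verify that $(\Olg(G),u)$ is a compact matrix quantum group. Generation holds by definition, and $u,u^t$ similar to unitaries was just noted. For the comultiplication, apply the map $u^i_j\mapsto\sum_k u^i_k\otimes u^k_j$ to a defining relation $Tu^{\otimes k}=u^{\otimes l}T$: the result follows from the same relation used twice, using only that $\RCat$ is closed under composition; hence $\Delta$ extends to a $*$-homomorphism. One similarly checks that $\epsilon(u^i_j):=\delta_{ij}$ and $S(u^i_j):=[u^{-1}]^i_j$ give $\Olg(G)$ its Hopf $*$-algebra structure. Thus $G=(\Olg(G),u)\subset O^+(F)$ is an orthogonal compact matrix quantum group.

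It remains to prove $\RCat_G=\RCat$. The inclusion $\RCat(k,l)\subset\RCat_G(k,l)=\Mor(u^{\otimes k},u^{\otimes l})$ holds by the very definition of $\Olg(G)$. The main obstacle is the reverse inclusion: one must show that imposing the morphisms of $\RCat$ as intertwiners creates no new ones, equivalently that the only linear relations among the coefficients $u^{\otimes k}_{\mathbf i\mathbf j}$ of $\Olg(G)$ are those forced by $\RCat$. For this I would build a faithful concrete model of $\Olg(G)$: since every object $k$ is self-dual (its duality assembled from iterated copies of $R$), one has $\RCat(k,l)\simeq\RCat(0,k+l)$, and $u$ can be made to act on the graded space $\mathcal{H}:=\bigoplus_{k\ge 0}\RCat(0,k)$, with $u^i_j$ adjoining a strand and then applying morphisms of $\RCat$. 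This model satisfies all the imposed relations by construction; the technical heart is a linear-independence (diamond-lemma-type) argument, due to Woronowicz and later streamlined by Malacarne, showing that the model realizes the universal algebra \emph{faithfully}, so that $\Mor(u^{\otimes k},u^{\otimes l})$ computed in it is exactly $\RCat(k,l)$. I expect this faithfulness statement to be the genuinely hard step; see \cite{Wor88,Mal18}.

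For uniqueness, let $G'=(\Olg(G'),v)$ be another orthogonal compact matrix quantum group with $\RCat_{G'}=\RCat$. Then $\Olg(G)$ and $\Olg(G')$ are each linearly spanned by the coefficients of the tensor powers of their respective fundamental representations, and the linear dependencies among these coefficients are governed entirely by the spaces $\Mor(u^{\otimes k},u^{\otimes l})=\RCat(k,l)=\Mor(v^{\otimes k},v^{\otimes l})$. Hence $u^i_j\mapsto v^i_j$ extends to a $*$-isomorphism $\Olg(G)\to\Olg(G')$ compatible with the comultiplications, so $G$ and $G'$ are isomorphic.
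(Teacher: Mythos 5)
The paper does not prove Theorem~\ref{T.TK} at all: it is quoted as a known result with a pointer to \cite{Wor88,Mal18}, and the text only records the consequence that $\Olg(G)$ can be presented by the intertwiner relations. Your proposal is therefore not comparable to a proof in the paper, but it is a faithful roadmap of the standard argument from exactly those references: define $\Olg(G)$ as the universal $*$-algebra with relations $Tu^{\otimes k}=u^{\otimes l}T$, observe that $R$ and $R^\dag$ give the $O^+(F)$ relations with $F^j_i=R^{ij}$, verify the Hopf $*$-algebra structure, and reduce everything to showing that no intertwiners beyond $\RCat$ are created. You correctly identify that last step as the entire mathematical content of the theorem, and your proposal does not actually carry it out — the ``diamond-lemma-type linear-independence argument'' on the model $\bigoplus_k\RCat(0,k)$ is named but not executed, so as written this is a correct plan with the crux delegated to \cite{Wor88,Mal18}, which is the same level of completeness as the paper itself. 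Two small points worth tightening if you ever write this out: (i) the claim that one can always ``arrange $F\bar F=\pm\id$ after rescaling'' uses that the single morphism $R$ satisfies both zig-zag identities (i.e.\ serves as its own conjugate solution), which is an assumption on the category rather than a free normalization — in this paper it holds because the Frobenius algebra is symmetric, cf.\ Lemma~\ref{L.RF} and Corollary~\ref{C.Flaw}; (ii) the uniqueness you prove is the paper's notion of \emph{equality} (isomorphism sending $u^i_j\mapsto v^i_j$), which is indeed what the statement requires, and your justification via the fact that the coefficient coalgebra is determined by the spaces $\Mor(u^{\otimes k},u^{\otimes l})$ is correct.
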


We can write down the associated quantum group very concretely. The relations satisfied in the algebra $\Olg(G)$ will be exactly the intertwining relations:
$$\Olg(G)=\staralg(u^i_j,\;i,j=1,\dots,N\mid u=F\bar uF^{-1},\; Tu^{\otimes k}=u^{\otimes l}T\;\forall T\in\RCat(k,l)).$$

We say that $S$ is a generating set for a representation category $\RCat$ if $\RCat$ is the smallest monoidal $\dag$-category satisfying the assumptions of Theorem~\ref{T.TK} that contains $S$. We use the notation $\RCat=\langle S\rangle_N$ (it is important to specify the dimension $N$ of the vector space $V=\C^N$ associated to the object 1). If we know such a generating set, it is enough to use the generators for our relations:
$$\Olg(G)=\staralg(u^i_j,\;i,j=1,\dots,N\mid u=F\bar uF^{-1},\; Tu^{\otimes k}=u^{\otimes l}T\;\forall T\in S(k,l)).$$

\subsection{Quantum symmetries}
\label{secc.Qsym}

We are going to define the quantum automorphism group $G$ of a given quantum space $X$ to be the universal quantum group acting on $X$. First, we need to explain, what such an action means.

\begin{defn}
Let $X$ be a quantum space and $G$ a compact quantum group. An \emph{action} of $G$ on $X$ is described by a \emph{coaction} of $\Olg(G)$ on $C(X)$, that is, a $*$-homomorphism $\alpha\colon C(X)\to C(X)\otimes\Olg(G)$, which is
\begin{itemize}
\item \emph{coassociative}: $(\alpha\otimes\id)\alpha(x)=(\id\otimes\Delta)\alpha(x)$,
\item \emph{counital}:      $(\id\otimes\epsilon)\alpha(x)=x$,
\item $\eta^\dag$-preserving:  $(\eta^\dag\otimes\id)\alpha(x)=\eta^\dag(x)1_{\Olg(G)}$.
\end{itemize}
\end{defn}

These conditions can be conveniently characterized in the language of representations and intertwiners. The following two lemmata reformulate and expand \cite[Lemma 1.1, 1.2]{Ban99}.

\begin{nota}\label{N.ONB}
For the rest of this subsection, we fix an orthonormal basis $(e_i)_{i=1}^N$ of $l^2(X)$. As in Lemma~\ref{L.RF}, we denote by $F$ the matrix of the $*$-operation in $C(X)$, that is, $e_i^*=:\sum_j e_j\bar F_i^j$. Recall that $F_i^j=R^{ij}$. Consider some linear map $\alpha\colon C(X)\to C(X)\otimes \Olg(G)$ and denote by $u_i^j$ its matrix entries with respect to the basis $(e_i)$. That is, $\alpha(e_i)=\sum_j e_j\otimes u^j_i$. 
\end{nota}

\begin{lem}
Considering Notation~\ref{N.ONB}, $\alpha$ is coassociative if and only if $u$ is a representation of $G$. In addition, it is counital if and only if the representation $u$ is non-degenerate.
\end{lem}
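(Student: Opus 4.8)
The statement to prove is the final Lemma: assuming Notation~\ref{N.ONB}, the coaction $\alpha$ is coassociative if and only if $u$ is a representation of $G$, and $\alpha$ is counital if and only if $u$ is non-degenerate. The plan is to simply translate both conditions into the matrix entries $u^j_i$ and read off the equivalence. First I would expand the coassociativity condition $(\alpha\otimes\id)\alpha(e_i)=(\id\otimes\Delta)\alpha(e_i)$ in the basis $(e_i)$. On the left, $\alpha(e_i)=\sum_j e_j\otimes u^j_i$, so applying $\alpha\otimes\id$ gives $\sum_{j,k} e_k\otimes u^k_j\otimes u^j_i$. On the right, $(\id\otimes\Delta)\alpha(e_i)=\sum_k e_k\otimes\Delta(u^k_i)$. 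Since the $e_k\otimes(-)\otimes(-)$ are linearly independent, equality holds for all $i$ if and only if $\Delta(u^k_i)=\sum_j u^k_j\otimes u^j_i$ for all $i,k$, which is precisely the definition of $u$ being a representation of $G$.

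For the second equivalence, I would expand the counitality condition $(\id\otimes\epsilon)\alpha(e_i)=e_i$. The left-hand side is $\sum_j e_j\,\epsilon(u^j_i)$, so counitality is equivalent to $\epsilon(u^j_i)=\delta_{ij}$ for all $i,j$. By the characterization recalled in Section~\ref{secc.Rep} (a representation $v$ is non-degenerate if and only if $\epsilon(v^i_j)=\delta_{ij}$), this is exactly the statement that $u$ is non-degenerate. One small point worth noting explicitly: this second equivalence presupposes that $u$ is already a representation (so that ``non-degenerate'' is meaningful), which is fine since the lemma is naturally read as a conjunction where the first clause supplies that hypothesis; alternatively one observes that $\epsilon(u^j_i)=\delta_{ij}$ is by itself the condition being asserted.

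I do not expect any genuine obstacle here; the proof is a direct unwinding of definitions in coordinates, and the only care needed is bookkeeping of indices and the placement of tensor legs (which leg carries $\Olg(G)$, and the order in which $\alpha\otimes\id$ versus $\id\otimes\Delta$ act). The fact that $(e_i)$ is a basis is what lets us pass from the abstract identities to the entrywise ones. The role of $F$ from Notation~\ref{N.ONB} does not actually enter this particular lemma — it will be needed for the companion lemma about $\eta^\dag$-preservation and $*$-preservation — so I would not invoke it here beyond keeping the notation consistent.
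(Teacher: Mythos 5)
Your proof is correct and follows exactly the paper's own argument: expand both sides of the coassociativity and counitality conditions in the basis $(e_i)$, compare coefficients, and identify the resulting entrywise identities with the definitions of a representation and of non-degeneracy. The extra remark about the logical dependence of the second clause on the first is a reasonable clarification but not needed.
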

\begin{proof}
We just expand both sides of the coassociativity condition.
\begin{align*}
[(\alpha\otimes\id)\alpha](e_i)&=\sum_k e_k\otimes u^k_j\otimes u^j_i,\\
[(\id\otimes\Delta)\alpha](e_i)&=\sum_k e_k\otimes\Delta(u^k_i).
\end{align*}
So, $(\alpha\otimes\id)\alpha(e_i)=(\id\otimes\Delta)\alpha(e_i)$ if and only if $\Delta(u^i_j)=\sum_ku^i_k\otimes u^k_j$.

Similarly, rewriting the condition $(\id\otimes\epsilon)\alpha(e_i)=e_i$ using the entries $u^i_j$ we obtain $\epsilon(u^i_j)=\delta_{ij}$, which is equivalent to $u$ being non-degenerate.
\end{proof}

\begin{lem}
\label{L.alphaMor}
Let $X$ be a quantum space and $G$ a compact quantum group. Consider Notation~\ref{N.ONB} and suppose that $u$ is a non-degenerate representation. Then the following holds:
\begin{enumerate}
\item $\alpha$ is involutive if and only if $F\in\Mor(\bar u,u)$,
\item $\alpha$ is multiplicative if and only if $m\in\Mor(u\otimes u,u)$,
\item $\alpha$ is unital if and only if $\eta\in\Mor(1,u)$,
\item $\alpha$ is $\eta^\dag$-preserving if and only if $\eta^\dag\in\Mor(u,1)$,
\item $\alpha$ preserves the bilinear form $(\cdot,\cdot)$ if and only if $R^\dag\in\Mor(u\otimes u,1)$,
\item $\alpha$ preserves the inner product $\langle\cdot,\cdot\rangle$ if and only if $u$ is unitary.
\end{enumerate}
Supposing that (2) and (6) hold, then $(1)\Leftrightarrow (3)\Leftrightarrow (4)\Leftrightarrow(5)$.
\end{lem}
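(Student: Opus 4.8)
The plan is to prove items (1)--(6) individually by expanding each condition in terms of the basis $(e_i)$ and the matrix entries $u^i_j$, exactly in the style of the preceding lemma, and then to deduce the final equivalence from items (2) and (6) by a short manipulation.

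\textbf{The six individual equivalences.} For each item I would write out both sides of the relevant condition applied to basis vectors. For (2), multiplicativity $\alpha(xy)=\alpha(x)\alpha(y)$ reads, on $e_i\otimes e_j$, as $\sum_k e_k\otimes u^k_l m^l_{ij}=\sum_{k,l}(e_k\otimes u^k_p)(e_l\otimes u^l_q)$ appropriately indexed; collecting terms shows this is equivalent to $m u^{\otimes 2}=u\,m$, i.e.\ $m\in\Mor(u\otimes u,u)$. For (3), unitality $\alpha(1)=1\otimes 1$ together with $1=\eta(1)=\sum_i\eta^i e_i$ gives $\sum_j e_j\otimes\sum_i u^j_i\eta^i=\sum_j e_j\otimes\eta^j 1$, i.e.\ $u\eta=\eta$, which is $\eta\in\Mor(1,u)$. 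For (4), the $\eta^\dag$-preserving condition $(\eta^\dag\otimes\id)\alpha(e_i)=\eta^\dag(e_i)1$ becomes $\sum_j\eta^\dag_j u^j_i=\eta^\dag_i 1$, i.e.\ $\eta^\dag u=\eta^\dag$. For (5), $\alpha$ preserving $(\cdot,\cdot)=R^\dag(\cdot\otimes\cdot)$ means $(\mathrm{id}\otimes R^\dag\text{-action})$ applied appropriately gives $R^\dag u^{\otimes 2}=R^\dag$, i.e.\ $R^\dag\in\Mor(u\otimes u,1)$; here I would be careful to phrase "preserves the bilinear form" as $(R^\dag\otimes\id)(\alpha\otimes\alpha)(x\otimes y)=R^\dag(x\otimes y)1$. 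For (6), $\langle\alpha(e_i),\alpha(e_j)\rangle=\langle e_i,e_j\rangle 1=\delta_{ij}1$; expanding the left side using that the basis is orthonormal and the inner product is conjugate-linear in the first slot yields $\sum_k u^{k*}_i u^k_j=\delta_{ij}$, one of the two unitarity relations; the other, $\sum_k u^i_k u^{j*}_k=\delta_{ij}$, follows because a representation $u$ of a compact matrix quantum group with $u^*u=\id$ automatically satisfies $uu^*=\id$ (or one invokes that $\alpha$ being a $*$-homomorphism between C*-algebras is isometric). For (1), involutivity means $\alpha(e_i^*)=\alpha(e_i)^*$; writing $e_i^*=\sum_j e_j\bar F^j_i$ and $\alpha(e_i)^*=\sum_j e_j^*\otimes u^{j*}_i=\sum_{j,k}e_k\bar F^k_j\otimes u^{j*}_i$, comparison gives $\sum_j\bar F^k_j u^j_i=\sum_j u^{k*}_j\bar F^j_i$ — equivalently, using $[\bar u]^k_j=u^{k*}_j$, this is $F\bar u=uF$, i.e.\ $F\in\Mor(\bar u,u)$. (One must keep straight the conjugates, since $\alpha$ is a $*$-homomorphism so it commutes with $*$ only on the $C(X)$ leg while acting by $*$ on the $\Olg(G)$ leg as well.)

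\textbf{The final equivalence.} Assuming (2), i.e.\ $m\in\Mor(u\otimes u,u)$, and (6), i.e.\ $u$ unitary, I want to show $(1)\Leftrightarrow(3)\Leftrightarrow(4)\Leftrightarrow(5)$. The key structural inputs are the relations from Corollary~\ref{C.Flaw} and the identities $R=\eta m^\dag$, $R^\dag=m\eta^\dag$, $\eta^\dag=R^\dag(\eta\otimes\id)$, together with the fact (Lemma~\ref{L.RF}) that $F=R$ as matrices. From $m\in\Mor(u\otimes u,u)$ one gets by taking $\dag$ (and using unitarity of $u$, so $u^\dag=\bar u^{-1}$ is well-behaved) that $m^\dag\in\Mor(u,u\otimes u)$. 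Then: $(3)\Leftrightarrow(4)$ because $\eta^\dag = R^\dag(\eta\otimes\id)$ and, rotating, $\eta$ and $\eta^\dag$ are related through $m,m^\dag$ and $R,R^\dag$; more directly, $\eta\in\Mor(1,u)$ composed/tensored with the intertwiner $m$ and the duality morphisms produces $\eta^\dag\in\Mor(u,1)$ and conversely. $(4)\Leftrightarrow(5)$ because $R^\dag=m\eta^\dag$: if $\eta^\dag\in\Mor(u,1)$ and $m\in\Mor(u\otimes u,u)$ then $R^\dag=m\eta^\dag\in\Mor(u\otimes u,1)$, and conversely $\eta^\dag=R^\dag(\eta\otimes\id)$ recovers (4) from (5) provided (3) holds — so one runs these as a cycle. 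Finally $(1)\Leftrightarrow(5)$: since $F=R$ and $R$ is (up to the flip, which is harmless by symmetry of the Frobenius algebra) the same morphism whose partial transpose gives $R^\dag$, the condition $F\in\Mor(\bar u,u)$ is equivalent to $R\in\Mor(1,u\otimes u)$ which by $\dag$ and unitarity is equivalent to $R^\dag\in\Mor(u\otimes u,1)$. I would present this as a clean cycle $(1)\Rightarrow(5)\Rightarrow(4)\Rightarrow(3)\Rightarrow(1)$ (or whichever direction is smoothest), invoking only Corollary~\ref{C.Flaw}, the defining formulas for $R,R^\dag$, and Lemma~\ref{L.RF}.

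\textbf{Expected main obstacle.} The routine index-chasing in (1)--(6) is bookkeeping, but the genuinely delicate point is item (1): getting the conjugates and the placement of $F$ versus $\bar F$ exactly right, since $\alpha$ is a $\ast$-homomorphism (so it intertwines $\ast$ on the first leg with $\ast$ on the second) while $F$ itself is the matrix of a \emph{conjugate-linear} map. The equivalence $F\in\Mor(\bar u,u)$ rather than $\Mor(u,\bar u)$ or $F\bar F=\pm\id$-type conditions hinges on this, and it is where I expect to spend the most care. The second mild subtlety is, in (6), justifying the full unitarity $uu^*=\id$ from only $u^*u=\id$ — this is standard for representations of compact matrix quantum groups but should be cited rather than re-proved. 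Everything in the final cycle is then formal manipulation of intertwiners using the Frobenius relations already established.
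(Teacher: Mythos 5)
Your proposal is correct and follows essentially the same route as the paper: each of (1)--(6) is verified by expanding the condition in the matrix entries $u^i_j$ with respect to the orthonormal basis (using non-degeneracy, i.e.\ invertibility of $u$, to upgrade $u^\dag u=\id$ to full unitarity in (6)), and the final equivalences are obtained from closure of the intertwiner spaces under $\dag$ (a consequence of (6)) together with the Frobenius identities $R^\dag=\eta^\dag\circ m$ and $\eta^\dag=R^\dag\circ m^\dag$. The only cosmetic difference is that you arrange the last part as a cycle and get $(1)\Leftrightarrow(5)$ by rotating $F$ into $R\in\Mor(1,u\otimes u)$ and dualizing, whereas the paper observes directly that the bilinear form is the inner product precomposed with the involution, so that under (6) the two preservation conditions coincide; both are one-line arguments from the same ingredients, and your flagged worry about the placement of $F$ versus $\bar F$ in item (1) is well-founded (the paper's own conventions in Lemma~\ref{L.RF} and Notation~\ref{N.ONB} are not mutually consistent on this point, so one fixed convention must be carried through the computation).
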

\begin{proof}
All items are easily proven by rewriting the condition in matrix entries. The first one:
\begin{align*}
\alpha(e_i^*)&=\sum_kF^k_i\alpha(e_k^*)=\sum_{j,k}F^k_ie_j\otimes u^j_k\\
\alpha(e_i)^*&=\sum_ke_k^*\otimes u^{k*}_i=\sum_{j,k}F^j_ke_j\otimes u^{k*}_i
\end{align*}
That is, $\alpha(e_i^*)=\alpha(e_i)^*$ if and only if $\sum_k u^j_kF^k_i=\sum_k F^j_ku^{k*}_i$, that is, $uF=F\bar u$, that is, $F\in\Mor(u,\bar u)$.

Similarly, we can prove all the other items. Actually, one just needs to realize that $(u^j_i)$ is just the matrix expression for the coaction $\alpha$. Then it becomes almost obvious that (2) the multiplicativity $\alpha(x)\alpha(y)=\alpha(xy)$ can be equivalently expressed as $m(u\otimes u)=um$; (3) the unitality $\alpha(\eta)=\eta\otimes 1$ is equivalent to $u\eta=\eta$; (4) the $\eta^\dag$-preserving property is equivalent to $\eta^\dag u=\eta^\dag$; (5) preserving the bilinear form is equivalent to $R^\dag(u\otimes u)=R^\dag$; (6) preserving the inner product means nothing but $\delta_{ij}=\langle e_i,e_j\rangle=\sum\langle e_k,e_l\rangle\otimes u^{k*}_iu^l_j=\sum_ku^{k*}_iu^k_j$ (since we assume non-degeneracy, $u$ is invertible and hence $u^\dag u=\id$ already implies $uu^\dag=\id$).

Finally, assume that (2) and (6) hold. From (6), it follows that the representation category of $u$ must be closed under involution, so $(3)\Leftrightarrow(4)$. Since the bilinear form and the inner product differ only in taking the involution, we have ${(1)\Leftrightarrow(5)}$. Finally, it holds that $R^\dag=\eta^\dag m$ and, conversely, $\eta^\dag=\eta^\dag mm^\dag=R^\dag m^\dag$, so ${(4)\Leftrightarrow(5)}$.
\end{proof}


\begin{defn}\label{D.QAut0}
Let $X$ be a quantum space with a distinguished orthonormal basis $(e_i)_{i=1}^N$ of $C(X)$. The quantum symmetry group of $X$ is the compact matrix quantum group $S^+_X=(\Olg(S^+_X),u)$ with
$$\Olg(S^+_X)=\staralg(u^i_j,i,j=1,\dots,N\mid\text{$u$ is unitary, }uF=F\bar u,\;m(u\otimes u)=um).$$
\end{defn}

That is, we have constructed $S^+_X$ to be the unique compact matrix quantum group $G\subset O^+(\bar F)$, whose representation category is generated by $m$ and consequently satisfies all the conditions of Lemma~\ref{L.alphaMor}. Equivalently, we can say that $S^+_X$ is the ``universal'' quantum group acting on $X$.

\begin{ex}
The quantum symmetry groups of finite quantum spaces were studied in \cite{Wan98,Ban99}. For $X$ being the classical space of $N$ points, the quantum symmetries are described by the well-known \emph{free symmetric quantum group} $S_N^+$ introduced by Wang \cite{Wan98}. As for the quantum space corresponding to the matrix algebra $M_n(\C)$, the quantum symmetries are described by the projective free orthogonal quantum group $PO_n^+$ \cite[Corollary~4.1]{Ban99}. (The latter might be slightly confusing as we derived in Proposition~\ref{P.Aut} that $\Aut M_n=PU_n$ and we would expect this to be a quantum subgroup of $S_{M_n}^+$. Nevertheless, it actually holds that $PO_n^+=PU_n^+$ \cite[Theorem~4.1]{GProj}.)
\end{ex}

\begin{defn}
\label{D.QAut}
Let $X$ be a quantum graph. The quantum automorphism group of $X$ is the compact matrix quantum group $\Aut^+ X=(\Olg(\Aut^+ X),u)$ with
$$\Olg(\Aut^+X)=\staralg\left(u^i_j,i,j=1,\dots,N\middle|\begin{matrix}\text{$u$ is unitary, }uF=F\bar u,\\m(u\otimes u)=um,\;Au=uA\end{matrix}\right)$$
\end{defn}

That is, we added the adjacency matrix $A$ into the representation category of $\Aut^+X$.

\subsection{Quantum isomorphisms}

\begin{defn}
\label{D.qiso}
Let $X_1,X_2$ be weighted quantum graphs with adjacency matrices $A_1:=A_{X_1}$, $A_2:=A_{X_2}$. Let us also denote their quantum automorphism groups by $G_1:=\Aut^+X_1$, $G_2:=\Aut^+X_2$ and the corresponding fundamental representations by $u_1,u_2$. We say that $X_1$ and $X_2$ are \emph{quantum isomorphic} if there is a monoidal equivalence $\Rep G_1\to\Rep G_2$ mapping $u_1\mapsto u_2$ and $A_1\mapsto A_2$. That is, equivalently, a monoidal $\dag$-category isomorphism $\RCat_{G_1}\to\RCat_{G_2}$ mapping $A_1\mapsto A_2$
\end{defn}

There are many different equivalent (and also some non-equivalent) definitions of quantum isomorphisms. We took this one from \cite[Theorem~4.7]{BCE+20}. See also \cite{MRV19,LMR20,BCE+20,MR20} for other approaches.

More important than this definition is the following kind of a constructive converse:

\begin{thm}[{\cite[Theorem~4.11]{BCE+20}}]
\label{T.qiso}
Let $X$ be a weighted quantum graph, denote $G:=\Aut^+X$. Let $G'$ be some other compact matrix quantum group that is monoidally equivalent to $G$. Then there is a weighted quantum graph $X'$ with $\Aut^+X'=G'$, which is quantum isomorphic to $X$ through the given monoidal equivalence.
\end{thm}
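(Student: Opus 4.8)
\medskip

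\noindent\textbf{Proof plan.} The strategy is to observe that every ingredient of the data ``weighted quantum graph together with its quantum automorphism group'' — multiplication, unit, duality morphism, $*$-operation, $\delta$-form, adjacency matrix — lives inside the representation category, and then to transport all of it across the given monoidal equivalence. Fix a monoidal $\dag$-equivalence $\Phi\colon\Rep G\to\Rep G'$, where $G=\Aut^+X$. Replacing $G'$ by its presentation with fundamental representation $\Phi(u)$ — which changes neither the underlying Hopf $*$-algebra nor the monoidal category — we may assume that $\Phi$ carries the fundamental representation $u$ of $G$ to that, $u'$, of $G'$; write $H'$ for the carrier space of $u'$. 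By Definition~\ref{D.QAut} and Theorem~\ref{T.TK}, $\RCat_G=\RCat_{\Aut^+X}$ is generated, as a rigid monoidal $\dag$-category over $l^2(X)$, by the multiplication $m$, the unit $\eta$ and the adjacency matrix $A$ of $X$ (so that $R=\eta m^\dag$, and hence the $*$-matrix $F$, are already present). The plan is to \emph{define} $m':=\Phi(m)$, $\eta':=\Phi(\eta)$, $A':=\Phi(A)$, together with $m'^\dag=\Phi(m^\dag)$, $\eta'^\dag=\Phi(\eta^\dag)$ and $R':=\Phi(R)$ — using that $\Phi$ preserves composition, tensor products, identities, $\dag$ and duals — and to show that $(H',m',\eta')$ is a finite quantum set $X'$ carrying a weighted quantum graph with adjacency matrix $A'$ which is quantum isomorphic to $X$ via $\Phi$.

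First I would check that $C(X'):=(H',m')$ is a finite quantum set. The unit laws~\eqref{eq.unity}, the Frobenius law~\eqref{eq.Flaw} and speciality $m'm'^\dag=\id$ are equalities between composites of $m,\eta,m^\dag,\eta^\dag$ and identities, hence are transported by $\Phi$; thus $(H',m',\eta')$ is a special $\dag$-Frobenius algebra, in particular a separable — therefore semisimple — unital algebra. The $*$-operation is itself categorical: $a^*=(a^\dag\otimes\id)R$ by Lemma~\ref{L.RF}, so $\Phi$ transports it, and the transported identity $\langle a,b\rangle=\eta^\dag(a^*b)$ forces $\eta'^\dag\bigl((a')^*a'\bigr)=\|a'\|_{H'}^2>0$ for $a'\neq0$. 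A finite-dimensional semisimple $*$-algebra over $\C$ with positive-definite involution is a C*-algebra (cf.~\cite{Vic11}); so $C(X')$ is one, $\eta'^\dag$ is a $\delta$-form on it (it satisfies $m'm'^\dag=\id$), and $l^2(X')=H'$.

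Next I would produce the quantum graph and compute its symmetries. Put $\tilde A':=(A'\otimes\id)R'=\Phi(\tilde A)\in C(X')\otimes C(X')^{\rm op}$. Its positivity is the one delicate point: by Proposition~\ref{P.Schur} the Schur product $\bullet$ and the Schur involution $*$ on $\Mor(u,u)$ are composites of $m,m^\dag,R,R^\dag$ and $\dag$, so $\Mor(u,u)$ is a unital $*$-subalgebra — hence a C*-subalgebra — of the Schur C*-algebra $C(X)\otimes C(X)^{\rm op}$; and $\Phi$ restricts to an \emph{injective} unital $*$-homomorphism $\Mor(u,u)\to\Mor(u',u')$, which is therefore isometric and positivity-preserving. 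Since $\tilde A\ge0$ in $C(X)\otimes C(X)^{\rm op}$ is exactly $A\ge0$ in $\Mor(u,u)$, we get $A'=\Phi(A)\ge0$ in $\Mor(u',u')$, i.e.\ $\tilde A'\ge0$, so $(X',\tilde A')$ is a weighted quantum graph with adjacency matrix $A'$. By Definition~\ref{D.QAut}, $\RCat_{\Aut^+X'}$ is the rigid monoidal $\dag$-category over $H'$ generated by $m',\eta',A'$, i.e.\ it equals $\Phi(\RCat_G)$; and since $\Phi$ is an equivalence $\Rep G\to\Rep G'$ with $u\mapsto u'$, this image is $\RCat_{G'}$. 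Hence $\RCat_{\Aut^+X'}=\RCat_{G'}$, so $\Aut^+X'=G'$ by the uniqueness part of Theorem~\ref{T.TK}. Finally $\Phi$ is a monoidal $\dag$-equivalence $\Rep\Aut^+X\to\Rep\Aut^+X'$ sending $u\mapsto u'$ and $A\mapsto A'$ — precisely a quantum isomorphism of $X$ with $X'$ in the sense of Definition~\ref{D.qiso} — and it is the prescribed one.

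The step I expect to be the main obstacle is the positivity bookkeeping in the middle two paragraphs: ``$C(X')$ is a C*-algebra'' and ``$\tilde A'$ is positive'' are not polynomial identities among the generating morphisms, so they cannot be transported by mere inspection; one must first recast positivity \emph{intrinsically} — via positive-definiteness of $\langle\,\cdot\,,\cdot\,\rangle$ for the algebra, and via membership in the C*-subalgebra $\Mor(u,u)$ of the Schur algebra for the edge element — and only then apply $\Phi$. A further subtlety worth noting is that the transported $\delta$-form $\eta'^\dag$ need not be \emph{tracial}: traciality is not a condition internal to $\RCat_G$ (it is invariance of $R$ under the ambient flip, which $\RCat_G$ does not see), and indeed the dimension of $C(X')$ may drop below that of $C(X)$. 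This is exactly why \cite{BCE+20}, from which this theorem is quoted, formulate weighted quantum graphs without the symmetry (tracial) assumption; in the symmetric setting of this paper one should either adopt that convention here or restrict to monoidal equivalences preserving the dimension of the fundamental representation.
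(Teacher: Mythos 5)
Your proposal is correct and follows exactly the route the paper takes: the paper only sketches this construction (defining $m'$, $\eta'^\dag$ and $A_{X'}$ as the images of $m$, $\eta^\dag$ and $A_X$ under the monoidal equivalence) and defers the verifications to \cite{BCE+20}, and your write-up supplies precisely those verifications — the C*-structure of $C(X')$ via positive-definiteness of the transported inner product, positivity of $\tilde A'$ via the Schur C*-algebra structure on $\Mor(u,u)$, and the Tannaka--Krein identification $\Aut^+X'=G'$. Your closing caveat about traciality is also well taken: the paper works throughout in the symmetric (tracial) setting, but a general monoidal equivalence need not preserve that condition, which is exactly why \cite{BCE+20}, from which the theorem is quoted, formulates quantum graphs without it.
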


The proof of this theorem is constructive. As also follows from Definition~\ref{D.qiso}, the monoidal equivalence has to map the fundamental representation $u$ of $G$ to the fundamental representation $u'$ of $G'$. We then define $C(X')$ to be the comodule space of $u'$ equipped with the multiplication $m'\in\Mor(u'\otimes u',u')$ given by the image of the multiplication $m\in\Mor(u\otimes u,u)$. The counit is also defined as the image of the counit $\eta^\dag\in\Mor(u,1)$ and finally the adjacency operator $A_{X'}$ is the image of the adjacency operator $A_X\in\Mor(u,u)$.

In the article \cite{BCE+20}, the proof is actually formulated for simple quantum graphs instead of our quantum graphs. But these additional conditions do not play any particular role in the proof and hence the statement can be proven exactly the same way. The additional conditions of simple quantum graphs can be handled separately in the following proposition.

\begin{prop}\label{P.qisomulti}
Quantum isomorphism of graphs preserves the following properties:
\begin{enumerate}
\item being a multigraph, i.e. $\mathop{\rm spec}(\tilde A)\subset\N_0$
\item being a graph, i.e. $A\bullet A=A$,
\item being undirected, i.e. $A=A^\dag$,
\item having no loops, i.e. $A\bullet I=m(A\otimes I)m^\dag=0$,
\item having a loop at every vertex, i.e. $A\bullet I=m(A\otimes I)m^\dag=I$,
\item the number of vertices $\#X=\dim C(X)=\eta^\dag\eta$,
\item the number of edges $\#E(X,\tilde A)=\eta^\dag A\eta$.
\end{enumerate}
\end{prop}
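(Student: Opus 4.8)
The plan is to exploit the fact, recalled just before this proposition, that a quantum isomorphism is realized by a monoidal $\dag$-functor $\F\colon \RCat_{G_1}\to\RCat_{G_2}$ which sends the object $1$ to $1$, the fundamental representation $u_1$ to $u_2$, and the adjacency $A_1$ to $A_2$; moreover, since $\F$ preserves the whole categorical structure (composition, tensor product, $\dag$, and in particular the duality morphism $R$ and hence $m$, $\eta$, $\eta^\dag$, $R^\dag$ up to the canonical identifications), it sends the entire ``Frobenius-algebra apparatus'' of $C(X_1)$ to that of $C(X_2)$. So the proof is essentially: every quantity in items (1)--(7) is defined purely in terms of morphisms living in the categories $\RCat_{G_i}$ (namely $A$, $I$, $m$, $m^\dag$, $\eta$, $\eta^\dag$), and each defining equation is an equation between such morphisms; applying the functor $\F$ transports the equation verbatim.

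Concretely, first I would note that $\F$ preserves $m$ and $m^\dag$: indeed $m_1\in\Mor(u_1\otimes u_1,u_1)$ is part of the data determining $\Aut^+X_1$, and a monoidal equivalence $\Rep G_1\to\Rep G_2$ with $u_1\mapsto u_2$ must carry $m_1$ to $m_2$ (this is exactly how $m'$ is constructed in the proof of Theorem~\ref{T.qiso} sketched above); then $m^\dag$ is transported because $\F$ is a $\dag$-functor, and $\eta,\eta^\dag$ are transported because $\eta\in\Mor(1,u)$, $\eta^\dag\in\Mor(u,1)$ are likewise intertwiners preserved by the equivalence (or, alternatively, one recovers $\eta^\dag$ from $m$ via $\eta^\dag = R^\dag m^\dag$ as in the proof of Lemma~\ref{L.alphaMor}, and $R^\dag$ is the duality data that any monoidal $\dag$-functor preserves). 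In particular $\F$ preserves $I=\id_{u}$ trivially, and preserves $\tilde I = R$, and preserves the Schur product $\bullet$ since $A\bullet B = m(A\otimes B)m^\dag$ is built from morphisms that are all transported. With this in hand: (2) $A_1\bullet A_1=A_1$ maps to $A_2\bullet A_2=A_2$; (3) $A_1=A_1^\dag$ maps to $A_2=A_2^\dag$ (using $\dag$-preservation); (4) $A_1\bullet I=m_1(A_1\otimes I)m_1^\dag=0$ maps to the same equation with index $2$, and likewise (5) with right-hand side $I$; (6) $\#X_i=\eta_i^\dag\eta_i$ is the scalar obtained by composing $\eta^\dag$ with $\eta$, a morphism $1\to 1$, i.e.\ a number, preserved by $\F$; (7) $\#E=\eta_i^\dag A_i\eta_i$ is the scalar obtained by composing $\eta^\dag$, $A$, $\eta$, again preserved.

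For item (1) I would argue slightly differently, since $\mathrm{spec}(\tilde A)\subset\N_0$ is not obviously a ``categorical'' condition. Here I would use that a monoidal $\dag$-equivalence restricts to a $\dag$-algebra isomorphism $\mathrm{End}(u_1)\to\mathrm{End}(u_2)$ — equivalently, after the rotation, an isomorphism of the C*-algebras $(\Lin(l^2(X_1)),\bullet)\to(\Lin(l^2(X_2)),\bullet)$, i.e.\ of $C(X_1)\otimes C(X_1)^{\rm op}$ onto $C(X_2)\otimes C(X_2)^{\rm op}$, carrying $\tilde A_1$ to $\tilde A_2$. A $*$-isomorphism of C*-algebras preserves spectra, so $\mathrm{spec}(\tilde A_1)=\mathrm{spec}(\tilde A_2)$; in particular one side is contained in $\N_0$ iff the other is. (This C*-algebra-isomorphism viewpoint also re-proves (2), (3) at once: $\tilde A$ is a projection iff its image is, and the image of $\tilde A^\dag$-type conditions are preserved, since undirectedness $A=A^\dag$ corresponds under rotation to $\tilde A = T_{\crosspart}\tilde A$, and $T_{\crosspart}$ is itself a morphism built from $R$, hence preserved.)

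The main obstacle — really the only point requiring care rather than bookkeeping — is to be precise about \emph{why} the monoidal equivalence preserves $m$, $\eta^\dag$, and therefore $I$, $\tilde I$, and the scalars in (6)--(7). One must invoke that $\Aut^+X_i$ is \emph{defined} so that $m_i\in\RCat_{G_i}(2,1)$ and $\eta_i^\dag\in\RCat_{G_i}(1,0)$ (Definition~\ref{D.QAut}, via Lemma~\ref{L.alphaMor}), and that a monoidal $\dag$-category isomorphism $\RCat_{G_1}\to\RCat_{G_2}$ sending $u_1\mapsto u_2$ is forced to act as the identity on the underlying vector spaces $(\C^N)^{\otimes k}$ up to the chosen bases, hence literally fixes the matrices $m$, $m^\dag$, $\eta$, $\eta^\dag$, $R$ (this is the content of how the quantum-isomorphic graph $X'$ is built in Theorem~\ref{T.qiso}: its Frobenius structure is \emph{defined} as the image of the original one). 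Once that dictionary is spelled out, items (2)--(7) are immediate transport of equations, and (1) is the spectral-invariance remark above; I would present the argument in that order, leading with the general ``transport of structure'' principle and then knocking off the seven items in a short list.
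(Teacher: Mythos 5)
Your proposal is correct and follows essentially the same route as the paper: the paper's proof is simply ``apply the monoidal equivalence to the characterizing equations,'' with the same parenthetical observation for item (1) that the equivalence induces an isomorphism of $\Mor(u,u)$ with respect to the Schur product and hence preserves the spectrum of $\tilde A$. Your write-up just spells out in more detail why $m$, $\eta^\dag$, etc.\ are transported, which is exactly the content the paper delegates to the discussion surrounding Theorem~\ref{T.qiso}.
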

\begin{proof}
We just apply the monoidal equivalence to the characterizing equations. (For item (1), note that the monoidal equivalence induces an isomorphism of the endomorphism algebra $\Mor(u,u)$ not only with respect to the standard composition, but also with respect to the Schur product $\bullet$, so it must in particular preserve the properties of $A$ with respect to this product such as its spectrum.)
\end{proof}

\begin{rem}
It seems like quantum isomorphisms preserve every reasonable property. The truth is that it preserves every reasonable property formulated in terms of the representation category of $\Aut^+X$. But as we are going to see in the subsequent text, there are also many other reasonable properties, which are not preserved. In particular, the quantum automorphism group itself $\Aut^+X$ is not preserved (there are pairs of non-isomorphic quantum groups which are monoidally equivalent). Also the properties that rely on classical isomorphisms and homomorphisms are not preserved such as the classical automorphism group $\Aut X$ or the structure of quantum subgroups.
\end{rem}

\section{Twisting Cayley graphs of abelian groups}
\label{sec.twist}

In this section, we present a general construction for deforming Cayley graphs of an abelian groups using a 2-cocycle (equivalently a unitary bicharacter) on the group $\Gamma$. Our construction is actually a special case of 2-cocycle deformations introduced in \cite{MRV19}. However, in the case of Cayley graphs, the situation simplifies considerably.

\subsection{Cayley graphs of abelian groups}
\label{secc.Cayley}
Let us make a brief introduction to the theory of Cayley graph of abelian groups.

Let $\Gamma$ be a finite abelian group. We denote by $\Irr\Gamma\subset C(\Gamma)$ the set of all irreducible characters (that is, one-dimensional representations; since $\Gamma$ is abelian, all irreducible representations are in fact one-dimensional). Note that $\Irr\Gamma$ forms a basis of $C(\Gamma)$ and expressing a function in this basis is exactly the \emph{Fourier transform} on $\Gamma$.

Actually, the irreducible characters can be indexed by the group elements of $\Gamma$: Each element $\mu\in\Gamma$ defines an irreducible character $\tau_\mu$ such that $\tau_\mu\tau_\nu=\tau_{\mu+\nu}$, where + denotes the group operation. So, these characters provide actually a realization of the group $\Gamma$ and the Fourier transform provides an isomorphism $C(\Gamma)\simeq\C\Gamma$.

We denote by $\F$ the Fourier transformation matrix $\F^\alpha_\mu=\tau_\mu(\alpha)$. It holds that $[\F^{-1}]^\mu_\alpha=\frac{1}{N}\bar\F^\alpha_\mu=\frac{1}{N}\tau_\mu(-\alpha)$. If the group $\Gamma$ is realized as $\Gamma=\Z_{n_1}\times\cdots\times\Z_{n_m}$, then the characters can be realized as $\tau_\mu(\alpha)=\prod_{i=1}^m \omega_i^{\alpha_i\mu_i}$, where $\omega_i$ is some primitive $n_i$-th root of unity for every $i$. 

Consider a set $S\subset\Gamma$. The \emph{Cayley graph} of the group $\Gamma$ with respect to $S$ denoted by $\Cay(\Gamma,S)$ is a directed graph defined on the vertex set $\Gamma$ with an edge $(\alpha,\beta)$ for every pair of elements such that $\beta=\alpha+\theta$ for some $\theta\in S$. The set $S$ is usually assumed to be a generating set of $\Gamma$, which then implies that $\Cay(\Gamma,S)$ is connected. If $S$ is closed under the group inversion, then the Cayley graph is actually undirected (for every edge, one also has the opposite one). The adjacency matrix of $\Cay(\Gamma,S)$ is of the form
$$[A]^\beta_\alpha=
\begin{cases}
1&\text{if $\beta=\alpha+\theta$ for some $\theta\in S$,}\\
0&\text{otherwise.}
\end{cases}
$$

\begin{prop}[\cite{Lov75,Bab79}]
\label{P.lambda}
Let $\Gamma$ be a finite abelian group and $S\subset\Gamma$. Denote by $A$ the adjacency matrix associated to the Cayley graph $\Cay(\Gamma,S)$. Then $\Irr\Gamma$ forms the eigenbasis of $A$. Given $\mu\in\Gamma$, the eigenvalue corresponding to $\tau_\mu$ is given by
\begin{equation}
\label{eq.lambda}
\lambda_\mu=\sum_{\theta\in S}\tau_\mu(-\theta).
\end{equation}
\end{prop}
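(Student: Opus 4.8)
The plan is to verify by a direct computation that each irreducible character $\tau_\mu$ is an eigenvector of the adjacency operator $A$ with eigenvalue $\lambda_\mu=\sum_{\theta\in S}\tau_\mu(-\theta)$, and then conclude using the fact recalled above that $\Irr\Gamma$ is a basis of $C(\Gamma)\simeq l^2(\Gamma)$, so that it automatically becomes an eigenbasis.

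First I would work in the basis $(\delta_\alpha)_{\alpha\in\Gamma}$ of point masses, in which the definition of $[A]^\beta_\alpha$ says precisely that $A\delta_\alpha=\sum_{\theta\in S}\delta_{\alpha+\theta}$ (the terms are distinct since $\theta\mapsto\alpha+\theta$ is injective). Writing the character as a function, $\tau_\mu=\sum_{\alpha\in\Gamma}\tau_\mu(\alpha)\,\delta_\alpha$, and applying $A$ gives $A\tau_\mu=\sum_{\theta\in S}\sum_{\alpha\in\Gamma}\tau_\mu(\alpha)\,\delta_{\alpha+\theta}$. Reindexing the inner sum by $\gamma=\alpha+\theta$ (so $\alpha=\gamma-\theta$, and $\gamma$ again ranges over all of $\Gamma$) turns this into $A\tau_\mu=\sum_{\gamma\in\Gamma}\bigl(\sum_{\theta\in S}\tau_\mu(\gamma-\theta)\bigr)\delta_\gamma$.

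The key step is then the multiplicativity of the irreducible character in its argument: since $\tau_\mu$ is a group homomorphism $\Gamma\to\T$ (for instance, in the realization $\Gamma=\Z_{n_1}\times\cdots\times\Z_{n_m}$ one has $\tau_\mu(\alpha)=\prod_i\omega_i^{\alpha_i\mu_i}$), we get $\tau_\mu(\gamma-\theta)=\tau_\mu(\gamma)\,\tau_\mu(-\theta)$, hence $A\tau_\mu=\bigl(\sum_{\theta\in S}\tau_\mu(-\theta)\bigr)\sum_{\gamma\in\Gamma}\tau_\mu(\gamma)\,\delta_\gamma=\lambda_\mu\tau_\mu$ with $\lambda_\mu$ as in \eqref{eq.lambda}. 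Since $\Irr\Gamma$ is a basis of $C(\Gamma)$, this shows it is an eigenbasis of $A$, which finishes the proof. There is no genuine obstacle here; the statement is an elementary computation, and the only point requiring a little care is the index shift together with using the multiplicativity of $\tau_\mu$ in its argument (equivalently $\tau_\mu(-\theta)=\overline{\tau_\mu(\theta)}$), which should not be confused with the pointwise product $\tau_\mu\tau_\nu=\tau_{\mu+\nu}$ in $C(\Gamma)$. Alternatively one could phrase the whole argument by noting that $A$ is the operator of convolution with the indicator function of $-S$ and that the Fourier transform diagonalizes convolution, but the direct computation above is shorter.
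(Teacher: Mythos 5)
Your proposal is correct and is essentially the same argument as the paper's: a direct computation showing $A\tau_\mu=\lambda_\mu\tau_\mu$ via an index shift and the multiplicativity of the character, followed by the observation that $\Irr\Gamma$ is a basis. The only cosmetic difference is that you expand $\tau_\mu$ in the basis $(\delta_\alpha)$ and reindex, whereas the paper computes the coordinate $[A\tau_\mu]^\alpha=\sum_\beta A^\alpha_\beta\tau_\mu(\beta)$ directly; these are the same calculation.
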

\begin{proof}
This is just a straightforward check. Take any $\chi\in\Irr\Gamma$. Then we have
\[[A\tau_\mu]^\alpha=\sum_{\beta\in\Gamma}A^\alpha_\beta\tau_\mu(\beta)=\sum_{\theta\in S}\tau_\mu(\alpha-\theta)=\sum_{\theta\in S}\tau_\mu(-\theta)\tau_\mu(\alpha)=\lambda_\mu\tau_\mu(\alpha).\qedhere\]
\end{proof}

\begin{ex}[Hypercube graph]\label{E.hypercube}
For any $n\in\N$, we define the \emph{hypercube graph} $Q_n$ to be the graph formed by vertices and edges of an $n$-dimensional hypercube. It can be interpreted as the Cayley graph of the group $\Gamma=\Z_2^n$ with respect to the generating set $\{\epsilon_1,\dots,\epsilon_n\}$, where $\epsilon_i=(0,\dots,0,1,0,\dots,0)$ is the standard generator of the $i$-th copy of $\Z_2$. We can use formula~\eqref{eq.lambda} to compute its spectrum as
$$\lambda_\mu=\sum_{i=1}^n\tau_\mu(\epsilon_i)=\sum_{i=1}^n(-1)^{\mu_i}=(n-2\deg\mu),$$
where $\deg\mu$ denotes the number of non-zero entries in $\mu$.
\end{ex}

In \cite{GroAbSym}, we showed how to use the knowledge about the spectrum and the eigenvectors to determine the quantum automorphism groups of cube-like graphs.

When working with ordinary (Cayley) graphs, one typically expresses everything in the standard basis $(\delta_\alpha)$ of $C(\Gamma)$. That is, the quantum automorphism group of the graph is the quantum subgroup of $S_N^+$ given by the relation $uA=Au$, where $u$ is the fundamental representation and $A$ is the ordinary adjacency matrix. The quantum automorphism group then acts by $\delta_i\mapsto\sum_j u_i^j\otimes\delta_j$.

However, in order to perform the twisting procedure in the subsequent text, it will be more convenient to express everything in the Fourier basis $(\tau_\mu)$. Here, the adjacency operator becomes a diagonal matrix $\hat A:=\F^{-1}A\F=\mathop{\rm diag}(\lambda_\mu)_{\mu\in\Gamma}$ as we just shown. The quantum automorphism group is then expressed using the fundamental representation $\hat u:=\F^{-1}u\F$. So it is the quantum subgroup of $\hat S_N^+:=\F^{-1}S_N^+\F$ with respect to the relation $\hat u\hat A=\hat A\hat u$.

\subsection{2-cocycle twists}
\label{secc.twist}

Let $G$ be a quantum group. A unitary 2-cocycle on $\Olg(G)$ is a convolution invertible linear map $\Olg(G)\otimes \Olg(G)\to\C$ satisfying certain axioms. Such a 2-cocycle defines a quantum group -- the \emph{cocycle twist} $\breve G$ \cite{Doi93}. It holds that this new quantum group is monoidally equivalent with the original one \cite{Sch96}. We will use a special class of 2-cocycle here, which will be induced by unitary bicharacters on some finite subgroup of $G$.

\begin{defn}
Let $\Gamma$ be a finite abelian group. A \emph{unitary bicharacter} on $\Gamma$ is a map $\sigma\colon\Gamma\times\Gamma\to\T$ satisfying
$$\sigma(\alpha+\beta,\gamma)=\sigma(\alpha,\gamma)\sigma(\beta,\gamma),\qquad \sigma(\alpha,\beta+\gamma)=\sigma(\alpha,\beta)\sigma(\alpha,\gamma),$$
that is, it has to be a group homomorphism in both entries.
\end{defn}

\begin{rem}[{\cite[Proposition~2.6]{Tam00}}]
Every unitary bicharacter on a finite abelian group $\Gamma$ is a 2-cocycle. It is a 2-coboundary if and only if it is symmetric, i.e.\ $\sigma(\alpha,\beta)=\sigma(\beta,\alpha)$. In fact, every element of the second cohomology $H^2(\Gamma)$ has a unique representative by an alternating bicharacter (i.e. satisfying $\sigma(\alpha,\beta)=\bar\sigma(\beta,\alpha)$).
\end{rem}

Consider a compact matrix quantum group $G\subset O^+(F)$. Suppose that $\hat\Gamma\subset G$. By this, we mean that there is a $*$-homomorphism $\Olg(G)\to\C\Gamma$ mapping $u^i_j\mapsto g_i\delta_{ij}$, where $\{g_i\}$ is some generating set of $\Gamma$. Consider a unitary bicharacter $\sigma$ on $\Gamma$ and denote $\sigma_{ij}:=\sigma(g_i,g_j)$. For a multiindex $\mathbf{i}=(i_1,\dots,i_k)$, denote also $\sigma_\mathbf{i}:=\prod_{1\le a<b\le k}\sigma_{i_ai_b}$.

For every $T\in\RCat(k,l)$, we define $[\breve T]^\mathbf{i}_\mathbf{j}:=\sigma_\mathbf{i}\bar\sigma_\mathbf{j}T^\mathbf{i}_\mathbf{j}$. We put $\breve\RCat(k,l):=\{\breve T\mid T\in\RCat(k,l)\}$.

\begin{prop}
\label{P.Tsigma}
The mapping $T\mapsto\breve T$ is a monoidal unitary functor. That is, $\breve\RCat_G$ is a rigid monoidal $\dag$-category.
\end{prop}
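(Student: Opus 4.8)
The plan is to verify directly that $T \mapsto \breve T$ respects the three structural operations of a rigid monoidal $\dag$-category: composition, monoidal product (tensoring), and the $\dag$-involution, and moreover that it sends the canonical morphisms (identities, duality morphisms $R$, $R^\dag$) to the correct images. Since $\RCat_G$ already satisfies all the category axioms, once we know that $\breve T \circ \breve S = \widebreve{T\circ S}$, $\breve T \otimes \breve S = \widebreve{T\otimes S}$, $(\breve T)^\dag = \widebreve{T^\dag}$, and that $\breve\RCat_G$ is closed under these operations and contains the relevant units, the abstract axioms transfer automatically. The unitarity of $\sigma$ (values in $\T$, so $\sigma_{ij}\bar\sigma_{ij} = 1$) is what makes everything consistent.

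First I would set up notation carefully: for a multiindex $\mathbf i = (i_1,\dots,i_k)$ the quantity $\sigma_\mathbf i = \prod_{1\le a<b\le k}\sigma_{i_a i_b}$, and the key combinatorial identity is that for a concatenated multiindex $\mathbf{ij}$ of length $k+l$ one has $\sigma_{\mathbf{ij}} = \sigma_\mathbf{i}\,\sigma_\mathbf{j}\,\prod_{a\le k < b}\sigma_{i_a j_b}$, i.e.\ the ``cross term'' $\tau(\mathbf i,\mathbf j) := \prod_{a=1}^k\prod_{b=1}^l \sigma_{i_a j_b}$ appears. Then for composition, writing $[\breve T\breve S]^\mathbf i_\mathbf k = \sum_\mathbf j \sigma_\mathbf i\bar\sigma_\mathbf j T^\mathbf i_\mathbf j\,\sigma_\mathbf j\bar\sigma_\mathbf k S^\mathbf j_\mathbf k$, the factors $\bar\sigma_\mathbf j \sigma_\mathbf j = 1$ cancel (using $|\sigma| = 1$), leaving $\sigma_\mathbf i\bar\sigma_\mathbf k [TS]^\mathbf i_\mathbf k = \widebreve{TS}{}^\mathbf i_\mathbf k$. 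For the tensor product, $[\breve T\otimes\breve S]^{\mathbf{i}\mathbf{i}'}_{\mathbf{j}\mathbf{j}'} = \sigma_\mathbf{i}\bar\sigma_\mathbf{j}\sigma_{\mathbf{i}'}\bar\sigma_{\mathbf{j}'}T^\mathbf i_\mathbf j S^{\mathbf i'}_{\mathbf j'}$, and one must check this equals $\sigma_{\mathbf{ii}'}\bar\sigma_{\mathbf{jj}'}[T\otimes S]^{\mathbf{ii}'}_{\mathbf{jj}'}$; expanding $\sigma_{\mathbf{ii}'} = \sigma_\mathbf i\sigma_{\mathbf i'}\tau(\mathbf i,\mathbf i')$ etc., the desired identity holds precisely when $\tau(\mathbf i,\mathbf i')\overline{\tau(\mathbf j,\mathbf j')} = 1$ on the support of $T\otimes S$ — but here one uses that $T\in\RCat_G$ intertwines $u^{\otimes k}, u^{\otimes l}$ and that $\hat\Gamma\subset G$: restricting the intertwiner relation to $\hat\Gamma$ shows $T^\mathbf i_\mathbf j \ne 0$ forces $\prod g_{i_a} = \prod g_{j_b}$ in $\Gamma$, hence by bicharacter multiplicativity $\tau(\mathbf i,\mathbf i') = \sigma(\sum_a g_{i_a}, \sum_b g_{i'_b})$ depends only on the group elements and agrees with the $\mathbf j$-version. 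For $\dag$, $[(\breve T)^\dag]^\mathbf j_\mathbf i = \overline{[\breve T]^\mathbf i_\mathbf j} = \overline{\sigma_\mathbf i\bar\sigma_\mathbf j T^\mathbf i_\mathbf j} = \bar\sigma_\mathbf i\sigma_\mathbf j\overline{T^\mathbf i_\mathbf j} = \sigma_\mathbf j\bar\sigma_\mathbf i[T^\dag]^\mathbf j_\mathbf i = \widebreve{T^\dag}{}^\mathbf j_\mathbf i$, again using $|\sigma|=1$. Finally the duality morphism $R$ (with $R^{ij}\ne0$ only when $g_ig_j = e$, forcing $\sigma_{ij}\sigma_{ji} = 1$, so $\breve R = R$ up to the expected scalar, and one checks the snake identities survive).

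The main obstacle is the tensor-functoriality step: it is the only place where the hypothesis $\hat\Gamma\subset G$ (rather than just ``$\sigma$ is any unitary bicharacter'') is genuinely used, and one must argue cleanly that every $T\in\RCat_G$ is ``$\Gamma$-graded'' — supported on multiindices with matching group-product — before the cross-term cancellation goes through. I would isolate this as a preliminary lemma: if $T\in\Mor(u^{\otimes k},u^{\otimes l})$ and there is a $*$-homomorphism $\Olg(G)\to\C\Gamma$, $u^i_j\mapsto g_i\delta_{ij}$, then $T^{\mathbf i}_{\mathbf j}\ne 0 \Rightarrow g_{i_1}\cdots g_{i_k} = g_{j_1}\cdots g_{j_l}$, which follows by pushing the relation $Tu^{\otimes k} = u^{\otimes l}T$ through the homomorphism and comparing coefficients of the group-like elements. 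Everything else — closedness of $\breve\RCat_G$ under the operations, preservation of identity morphisms ($\breve{\id} = \id$ since $\sigma_\mathbf i\bar\sigma_\mathbf i = 1$), rigidity via the (twisted but essentially unchanged) duality morphisms — is then bookkeeping with the identity $\sigma_{ij}\bar\sigma_{ij}=1$. I would conclude by noting that, since $\breve{}$ is a bijection $\RCat_G(k,l)\to\breve\RCat_G(k,l)$ intertwining composition, tensor, and $\dag$, and $\RCat_G$ is a rigid monoidal $\dag$-category, so is $\breve\RCat_G$, and $\breve{}$ is the claimed monoidal unitary functor.
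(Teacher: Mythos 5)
Your proposal is correct and follows essentially the same route as the paper: composition and $\dag$ by direct cancellation of $\sigma_{\mathbf j}\bar\sigma_{\mathbf j}=1$, and monoidality by identifying the cross term $\prod_{a,b}\sigma_{i_a i'_b}=\sigma(g_{\mathbf i},g_{\mathbf i'})$ and killing it against $\bar\sigma(g_{\mathbf j},g_{\mathbf j'})$ via the fact that intertwiners of $\hat\Gamma\subset G$ satisfy $T^{\mathbf i}_{\mathbf j}\neq 0\Rightarrow g_{\mathbf i}=g_{\mathbf j}$. The preliminary ``$\Gamma$-grading'' lemma you isolate is exactly the step the paper states as $T^{\mathbf i}_{\mathbf j}g_{\mathbf i}=T^{\mathbf i}_{\mathbf j}g_{\mathbf j}$, so the two arguments coincide.
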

\begin{proof}
Checking that it is a functor (i.e.~$\widebreve{ST}=\breve S\breve T$) and that it is unitary (i.e.~$\widebreve{T^\dag}=\breve T^\dag$) is absolutely straightforward.

Checking that the functor is monoidal is a bit tricky. We need to check that
$$[\breve T\otimes\breve S]^\mathbf{ik}_\mathbf{jl}=\sigma_\mathbf{i}\sigma_\mathbf{k}\bar\sigma_\mathbf{j}\bar\sigma_\mathbf{l}T^\mathbf{i}_\mathbf{j}S^\mathbf{k}_\mathbf{l}=\sigma_\mathbf{ik}\bar\sigma_\mathbf{jl}T^\mathbf{i}_\mathbf{j}S^\mathbf{k}_\mathbf{l}=[\widebreve{T\otimes S}]^\mathbf{il}_\mathbf{jl}$$

Let $a,b,c,d$ be the lengths of the multiindices $\mathbf{i}$, $\mathbf{j}$, $\mathbf{k}$, $\mathbf{l}$ respectively and denote $g_\mathbf{i}:=g_{i_1}\cdots g_{i_a}$ and similarly $g_\mathbf{j}$, $g_\mathbf{k}$, $g_\mathbf{l}$. The difference between left and right hand side is that the right hand side contains in addition the factors $\prod_{x=1}^a\prod_{y=1}^c\sigma_{i_xk_y}=\sigma(g_\mathbf{i},g_\mathbf{k})$ and $\prod_{x=1}^b\prod_{y=1}^d\bar\sigma_{j_xl_y}=\bar\sigma(g_\mathbf{j},g_\mathbf{l})$. 

We know that $T$ and $S$ are intertwiners of $G$ and hence also of $\hat\Gamma\subset G$. In particular, $T^\mathbf{i}_\mathbf{j}g_\mathbf{i}=T^\mathbf{i}_\mathbf{j}g_\mathbf{j}$, $S^\mathbf{k}_\mathbf{l}g_\mathbf{k}=S^\mathbf{k}_\mathbf{l}g_\mathbf{l}$. Consequently, we have $\sigma(g_\mathbf{i},g_\mathbf{k})T^\mathbf{i}_\mathbf{j}S^\mathbf{k}_\mathbf{l}=\sigma(g_\mathbf{j},g_\mathbf{l})T^\mathbf{i}_\mathbf{j}S^\mathbf{k}_\mathbf{l}$. Multiplying by $\bar\sigma(g_\mathbf{j},g_\mathbf{l})$, we get 1, which is what we wanted.
\end{proof}

Applying Woronowicz--Tannaka--Krein, $\breve\RCat_G$ defines a new quantum group $\breve G$ with $\RCat_{\breve G}=\breve\RCat_G$, which we will call the \emph{twist} of $G$. Note that if $G\subset O^+(F)$, then $\breve G\subset O^+(\breve F)$, where $\breve F_i^j=\sigma_{ij}F_i^j$. This new quantum group $\breve G$ is by construction monoidally equivalent with $G$ through the defining functor $T\mapsto\breve T$.

\subsection{Twisting Cayley graphs}
\label{secc.Caytwist}

Let $\Gamma$ be an abelian group. It is well known that $\Gamma$ acts on $\Cay(\Gamma,S)$ simply by the group operation (recall that $\Gamma$ is also the vertex set of $\Cay(\Gamma,S)$). So, we can write $\Gamma\subset\Aut\Cay(\Gamma,S)\subset\Aut^+\Cay(\Gamma,S)$. We may ask how exactly does $\Gamma$ embed into the automorphism group. In terms of matrices, an element $\alpha\in\Gamma$ acts through a permutation matrix $\Pi_\alpha$ with entries $[\Pi_\alpha]^\beta_\gamma=\delta_{\beta,\alpha+\gamma}$.

However, in order to construct the 2-cocycle twist, it would be more convenient if $\Gamma$ was included as the diagonal subgroup. This can be achieved using the Fourier transform on $\Gamma$: We define $\hat\Pi_\alpha:=\F^{-1}\Pi_\alpha\F$, where $\F$ is the Fourier transformation matrix with entries $\F_\mu^\alpha=\tau_\mu(\alpha)$, where $\tau_\mu$ is the irreducible character corresponding to $\mu\in\Gamma$. The entries of $\hat\Pi$ can be computed as
\begin{equation}\label{eq.FPi}
[\hat\Pi_\alpha]^\mu_\nu=\frac{1}{2^N}\sum_{\beta,\gamma}\delta_{\beta,\alpha+\gamma}\tau_\nu(\beta)\tau_\mu(-\gamma)=\frac{1}{2^N}\sum_\gamma \tau_\nu(\alpha)\tau_{\nu-\mu}(\gamma)=\tau_\nu(\alpha)\delta_{\mu\nu},
\end{equation}
where $N=|\Gamma|$. So, we indeed have $\hat\Gamma\subset\hat G$, where $G=\Aut^+\Cay(\Gamma,S)$ and $\hat G=\F^{-1}G\F$.

Consider now a unitary bicharacter $\sigma\colon\Gamma\times\Gamma\to\C$. Denote $\sigma_{\mu\nu}:=\sigma(\mu,\nu)$. As follows from Proposition~\ref{P.Tsigma}, this defines a new compact matrix quantum group $\breve G$ monoidally equivalent with $\hat G\simeq G$.

How does this quantum group look like? Recall that by definition, the quantum automorphism group $G=\Aut^+\Cay(\Gamma,S)$ of the Cayley graph corresponds to the representation category generated by the classical multiplication $m$ (with $m^\alpha_{\beta\gamma}=\delta_{\alpha\beta\gamma}$), the classical duality morphism $R$ (with $R^{\alpha\beta}=\delta_{\alpha\beta}$) and the classical adjacency matrix $A$; alternatively, we may use the unit $\eta$ or the counit $\eta^\dag$ instead of $R$ (with $\eta^\mu=1$).

As we just described, we need to perform the Fourier transform first, i.e.\ express everything in the Fourier basis $\tau_\mu$. So, let us compute the Fourier transformed intertwiners
$$\hat m^\kappa_{\mu\nu}=\frac{1}{N}\sum_{\alpha,\beta,\gamma}\tau_{\mu}(\alpha)\tau_{\nu}(\beta)\tau_\kappa(-\gamma)\delta_{\alpha\beta\gamma}=\frac{1}{N}\sum_\alpha \tau_{\mu+\nu-\kappa}(\alpha)=\delta_{\kappa,\mu+\nu}$$
Similarly, we compute that
$$\hat R^{\mu\nu}=\frac{1}{N}\delta_{\mu+\nu,0},\qquad \hat\eta^\mu=\delta_{\mu,0}.$$
Finally, as we showed in Section~\ref{secc.Cayley}, the adjacency matrix becomes diagonal $\hat A^\mu_\nu=\lambda_\mu\delta_{\mu\nu}$.

Now, we apply the cocycle twist:
\begin{align*}
\breve m^\kappa_{\mu\nu}&=\bar\sigma_{\mu\nu}\hat m^\kappa_{\mu\nu}=\bar\sigma_{\mu\nu}\delta_{\kappa,\mu+\nu},\\
\breve R^{\mu\nu}&=\sigma_{\mu\nu}\breve R^{\mu\nu}=\frac{1}{N}\sigma_{\mu\nu}\delta_{\mu,-\nu},\\
\breve \eta^\mu&=\hat\eta^\mu=\delta_{\mu,0},\\
\breve A^\mu_\nu&=\hat A^\mu_\nu=\lambda_\mu\delta_{\mu\nu}.
\end{align*}

The quantum group $\breve G$ is given by
$$\Olg(\breve G)=\staralg\left(\breve u^i_j,i,j=1,\dots,N\middle|\begin{matrix}\text{$\breve u$ is unitary, }\breve u\breve F=\breve F\bar{\breve u},\\\breve m(\breve u\otimes \breve u)=\breve u\breve m,\;A\breve u=\breve uA\end{matrix}\right)$$

From Theorem~\ref{T.qiso} it follows that there exists a quantum graph $X$, which is quantum isomorphic with $\Cay(\Gamma,S)$ and whose quantum automorphism group is $\breve G$. We will denote this graph by $\widebreve\Cay(\Gamma,S)$ and call it the \emph{twist} of $\Cay(\Gamma,S)$ by the bicharacter $\sigma$. Now it remains to determine, how actually this graph looks like.

By construction, the 2-cocycle twist preserves the dimension of the fundamental representation, so it will also act on some $N$-dimensional comodule algebra. In other words, it is possible to index the basis of $C(X)$ by the elements $\mu\in\Gamma$. So, let us denote the basis by $\breve\tau_\mu$. The multiplication, the counit, and the adjacency operator is going to be given by precisely the equations above. Consequently, we may define the following:

\begin{defn}
Let $\Gamma$ be a finite abelian group. Let $\sigma$ be a unitary bicharacter on $\Gamma$. We define the quantum space $\breve\Gamma$ as
\begin{equation}\label{eq.twistalg}
C(\breve\Gamma)=C^*(\breve\tau_\mu,\mu\in\Gamma\mid \breve\tau_\mu\breve\tau_\nu=\bar\sigma_{\mu\nu}\breve\tau_{\mu+\nu}).
\end{equation}
\end{defn}

\begin{rem}
This definition is by no means new. The construction of a 2-cocycle twist of a group algebra $\C\Gamma$ of some group $\Gamma$ is very well known, see e.g.~\cite[Section 1]{BB13}. Since for abelian groups $\Gamma$ we have $\C\Gamma\simeq C(\Gamma)$, we can interpret the twisted group algebra also as a twisted function algebra. The construction is a special case of a 2-cocycle twist construction for arbitrary Hopf comodule algebras, see e.g.~\cite[Section~7.5]{Mon93}.
\end{rem}

\begin{prop}
The mapping $\breve\eta^\dag\colon C(\breve\Gamma)\to\C$ defined by $\breve\eta^\dag(\breve\tau_\mu)=N\delta_{\mu,0}$ turns $C(\breve\Gamma)$ into a symmetric Frobenius algebra and hence it defines a finite quantum space.
\end{prop}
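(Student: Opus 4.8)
The plan is to check directly that $\psi:=\frac{1}{N}\breve\eta^\dag$ is a faithful tracial $\delta$-form on the C*-algebra $C(\breve\Gamma)$; by the conventions of Section~\ref{sec.qg} this is exactly what is needed for $(C(\breve\Gamma),\breve\eta^\dag)$ to be a symmetric Frobenius algebra, hence a finite quantum space, with counit $\eta^\dag=\breve\eta^\dag$. Conceptually the associative/coassociative part of the claim is forced by Proposition~\ref{P.Tsigma}: the twisting functor $T\mapsto\breve T$ is a monoidal $\dag$-functor and it carries the Frobenius structure of the finite quantum set $\Gamma$, written in the Fourier basis as $(\hat m,\hat\eta,\hat m^\dag,\hat\eta^\dag)$, to $(\breve m,\breve\eta,\breve m^\dag,\breve\eta^\dag)$. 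However, neither the $*$-structure nor the trace property is visibly transported by that functor (it does not fix the swap map), so I would argue everything by hand.

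First I would pin down the ingredients. The algebra in \eqref{eq.twistalg} is the twisted group C*-algebra of $\Gamma$; since $\sigma$ is a bicharacter, hence a unitary $2$-cocycle, it is a genuine $N$-dimensional C*-algebra with $(\breve\tau_\mu)_{\mu\in\Gamma}$ a linear basis, unit $\breve\tau_0$, and the $\breve\tau_\mu$ unitary, which forces $\breve\tau_\mu^*=\sigma_{\mu,-\mu}\breve\tau_{-\mu}$. I would record the bicharacter identities $\sigma_{\mu,-\mu}=\bar\sigma_{\mu\mu}=\sigma_{-\mu,\mu}$ and $\sigma_{-\mu,-\nu}=\sigma_{\mu\nu}$; with these, a short computation shows that $\breve\tau_\mu\mapsto\breve\tau_\mu^*$ is involutive and anti-multiplicative, so the $*$-structure is consistent.

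Next I would build the inner product. Put $\langle a,b\rangle:=\breve\eta^\dag(a^*b)$; using $\breve\tau_\mu^*\breve\tau_\nu=\sigma_{\mu,-\mu}\bar\sigma_{-\mu,\nu}\breve\tau_{\nu-\mu}$ together with $\breve\eta^\dag(\breve\tau_\kappa)=N\delta_{\kappa,0}$ one obtains $\langle\breve\tau_\mu,\breve\tau_\nu\rangle=N\delta_{\mu\nu}$, the residual phase on the diagonal cancelling precisely because $\sigma_{\mu,-\mu}=\sigma_{-\mu,\mu}$. Hence $(\breve\tau_\mu)$ is an orthogonal basis of norm $\sqrt{N}$, $\langle\cdot,\cdot\rangle$ is positive definite, and $\psi$ is a faithful state (unital, since $\psi(\breve\tau_0)=1$).

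Finally I would verify the $\delta$-form and trace conditions. In the orthonormal basis $e_\mu:=\frac{1}{\sqrt{N}}\breve\tau_\mu$ one computes $\breve m(e_\mu\otimes e_\nu)=\frac{1}{\sqrt{N}}\bar\sigma_{\mu\nu}e_{\mu+\nu}$, hence $\breve m^\dag(e_\kappa)=\frac{1}{\sqrt{N}}\sum_\mu\sigma_{\mu,\kappa-\mu}\,e_\mu\otimes e_{\kappa-\mu}$, and therefore $\breve m\breve m^\dag=\id$; this is the special (scaled $\delta$-form) condition, with $\delta^2=N=\dim C(\breve\Gamma)$. Moreover $\psi(\breve\tau_\mu\breve\tau_\nu)=\bar\sigma_{\mu\nu}\,\delta_{\mu+\nu,0}$, which vanishes unless $\nu=-\mu$, in which case $\psi(\breve\tau_\mu\breve\tau_\nu)=\bar\sigma_{\mu,-\mu}=\bar\sigma_{-\mu,\mu}=\psi(\breve\tau_\nu\breve\tau_\mu)$ by $\sigma_{\mu,-\mu}=\sigma_{-\mu,\mu}$; by bilinearity $\psi(ab)=\psi(ba)$. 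Thus $\psi$ is the tracial $\delta$-form of $C(\breve\Gamma)$, which endows it with the structure of a symmetric Frobenius algebra and hence of a finite quantum space. I do not expect a serious obstacle here; the delicate points are keeping the $\sqrt{N}$ normalisations straight, fixing the correct $*$-operation on $C(\breve\Gamma)$, and above all the trace property, which is exactly where one uses that $\sigma$ is a genuine bicharacter rather than merely a $2$-cocycle.
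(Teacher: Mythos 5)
Your proof is correct, but it takes a more self-contained route than the paper. The paper's own proof is two lines: it takes for granted that the twisting functor of Proposition~\ref{P.Tsigma}, being monoidal and unitary, already transports the whole special Frobenius structure $(\hat m,\hat\eta,\hat m^\dag,\hat\eta^\dag)$ of $C(\Gamma)$ to $(\breve m,\breve\eta,\breve m^\dag,\breve\eta^\dag)$, so the only thing left to verify is that the resulting bilinear form $\breve R_{\mu\nu}=N\bar\sigma_{\mu\nu}\delta_{\mu+\nu,0}$ is symmetric, which reduces to $\sigma_{\mu,-\mu}=\sigma_{-\mu,\mu}$ ($=\bar\sigma_{\mu\mu}$). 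You instead rebuild everything by hand on the abstract twisted group C*-algebra \eqref{eq.twistalg}: you fix the $*$-operation via unitarity of the $\breve\tau_\mu$, check positivity and faithfulness of $\frac1N\breve\eta^\dag$ through the orthogonality $\langle\breve\tau_\mu,\breve\tau_\nu\rangle=N\delta_{\mu\nu}$, verify $\breve m\breve m^\dag=\id$ directly, and prove traciality -- and in the end the crux is the very same bicharacter identity $\sigma(\mu,-\mu)=\sigma(-\mu,\mu)$. What your version buys is independence from the categorical machinery and an explicit treatment of the $*$-structure and positivity, which the paper's argument leaves implicit (the relations in \eqref{eq.twistalg} do not by themselves specify the involution, so your observation that one must take $\breve\tau_\mu^*=\sigma_{\mu,-\mu}\breve\tau_{-\mu}$ is a genuine point the paper glosses over); what it costs is length, and a reliance on knowing that the universal C*-algebra in \eqref{eq.twistalg} really is $N$-dimensional with the $\breve\tau_\mu$ linearly independent, which the functorial construction gives for free since the twist preserves the dimension of the comodule.
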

\begin{proof}
We have to show that the bilinear form $\breve R=\breve\eta^\dag\circ \breve m$ is indeed symmetric. Its entries are $\breve R_{\mu\nu}=\bar\sigma_{\mu\nu}R_{\mu\nu}=N\bar\sigma_{\mu\nu}\delta_{\mu+\nu,0}$. So, we only need to check that $\bar\sigma_{\mu,-\mu}=\bar\sigma_{-\mu,\mu}$. This is indeed true: Since $\sigma$ is a unitary bicharacter, both are actually equal to $\sigma_{\mu\mu}$.
\end{proof}

\begin{rem}
$C(\breve\Gamma)$ is not a Hopf algebra, so $\breve\Gamma$ is not a group nor a quantum group. It is only a quantum space.
\end{rem}

\begin{defn}
Let $\Gamma$ be a finite abelian group, $\sigma$ a unitary bicharacter on $\Gamma$ and $S\subset\Gamma$. For $\mu\in\Gamma$, denote by $\lambda_\mu=\sum_{\theta\in S}\tau_\mu(-\theta)$ the corresponding eigenvalue of the classical Cayley graph $\Cay(\Gamma,S)$. We define the $\sigma$-twisted Cayley graph $\widebreve\Cay(\Gamma,S)$ to be the quantum graph with underlying C*-algebra $C(\breve\Gamma)$ and adjacency operator $\breve A$ defined by $\breve A\breve\tau_\mu=\lambda_\mu\breve\tau_\mu$.
\end{defn}

\begin{thm}\label{T3}
Let $\Gamma$ be a finite abelian group, $\sigma$ a unitary bicharacter on $\Gamma$ and $S\subset\Gamma$. The quantum graph $\widebreve\Cay(\Gamma,S)$ is quantum isomorphic to the classical Cayley graph $\Cay(\Gamma,S)$. Its quantum automorphism group is the 2-cocycle twist $\breve G$ of the quantum automorphism group $G$ of the classical Cayley graph $\Cay(\Gamma,S)$.
\end{thm}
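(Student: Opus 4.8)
The plan is to obtain the statement as a direct application of the constructive quantum‑isomorphism theorem, Theorem~\ref{T.qiso}, to the classical graph $X=\Cay(\Gamma,S)$ and the monoidal equivalence produced by Proposition~\ref{P.Tsigma}. First I would recall the setup of Section~\ref{secc.Caytwist}: the translation action of $\Gamma$ embeds $\Gamma\subset\Aut\Cay(\Gamma,S)\subset G$ for $G=\Aut^+\Cay(\Gamma,S)$, and conjugating by the Fourier matrix $\F$ gives an isomorphic quantum group $\hat G=\F^{-1}G\F$ in which $\hat\Gamma$ sits as a \emph{diagonal} subgroup, as recorded by the identity $[\hat\Pi_\alpha]^\mu_\nu=\tau_\nu(\alpha)\delta_{\mu\nu}$. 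This is exactly the situation in which Proposition~\ref{P.Tsigma} turns the unitary bicharacter $\sigma$ into a monoidal unitary functor $T\mapsto\breve T$, hence into the twisted quantum group $\breve G$ with $\RCat_{\breve G}=\breve\RCat_{\hat G}$, which is monoidally equivalent to $\hat G\simeq G$ through that functor.

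Next I would invoke Theorem~\ref{T.qiso} with this monoidal equivalence. It supplies a (weighted) quantum graph $X'$ with $\Aut^+X'=\breve G$ that is quantum isomorphic to $\Cay(\Gamma,S)$, and it describes $X'$ explicitly: $C(X')$ is the comodule algebra of $\breve u$ whose multiplication, counit and adjacency operator are the images, under the equivalence, of the multiplication $m$, counit $\eta^\dag$ and adjacency $A$ of the classical Cayley graph. It then only remains to identify $X'$ with $\widebreve\Cay(\Gamma,S)$; all the relevant computations were already performed in Section~\ref{secc.Caytwist}. In the basis $(\breve\tau_\mu)_{\mu\in\Gamma}$ the image multiplication is $\breve m^\kappa_{\mu\nu}=\bar\sigma_{\mu\nu}\delta_{\kappa,\mu+\nu}$, which is precisely the defining relation $\breve\tau_\mu\breve\tau_\nu=\bar\sigma_{\mu\nu}\breve\tau_{\mu+\nu}$ of $C(\breve\Gamma)$ in \eqref{eq.twistalg}; the image counit is $\breve\eta^\dag(\breve\tau_\mu)=N\delta_{\mu,0}$, which is exactly the counit shown above to turn $C(\breve\Gamma)$ into a symmetric Frobenius algebra; and because $A$ is an \emph{endomorphism} of the fundamental object, the twisting scalar $\sigma_\mathbf{i}$ attached to a length‑one multiindex is an empty product, so the image adjacency is unchanged by the twist and equals $\hat A$, i.e. $\breve A\breve\tau_\mu=\lambda_\mu\breve\tau_\mu$ — which is the adjacency operator defining $\widebreve\Cay(\Gamma,S)$. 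Hence $X'=\widebreve\Cay(\Gamma,S)$, giving both the quantum isomorphism with $\Cay(\Gamma,S)$ and $\Aut^+\widebreve\Cay(\Gamma,S)=\breve G$; that $\widebreve\Cay(\Gamma,S)$ is an honest quantum graph rather than merely a weighted one follows from Proposition~\ref{P.qisomulti}\,(2).

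For the identification $\Aut^+\widebreve\Cay(\Gamma,S)=\breve G$ I would add the short remark that $\breve G$ really is presented by the relations listed at the end of Section~\ref{secc.Caytwist}: $\RCat_G$ is generated as a rigid monoidal $\dag$‑category by $m$ and $A$, the functor $T\mapsto\breve T$ is an isomorphism onto its image, so $\RCat_{\breve G}$ is generated by $\breve m$ and $\breve A$, and Theorem~\ref{T.TK} then produces exactly the stated algebra $\Olg(\breve G)$; since $\breve m$, $\breve R$, $\breve A$ are the structure morphisms and adjacency of $\widebreve\Cay(\Gamma,S)$, this category is $\RCat_{\Aut^+\widebreve\Cay(\Gamma,S)}$.

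There is no real difficulty here: the content has already been distributed over Proposition~\ref{P.Tsigma}, Theorem~\ref{T.qiso} and the explicit Fourier‑side computations, and the proof is essentially their assembly. The one place requiring care is the bookkeeping across the three pictures (standard basis, Fourier basis, $\sigma$‑twist) — in particular making sure that the monoidal equivalence fed to Theorem~\ref{T.qiso} is the composite of Fourier conjugation with the $\sigma$‑twist functor, and that it carries the classical adjacency $A$ to $\breve A$; the observation $\breve T=T$ for $T\in\Mor(u,u)$ makes this last point immediate.
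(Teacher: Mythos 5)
Your proposal is correct and follows essentially the same route as the paper: the paper's proof likewise observes that $\widebreve\Cay(\Gamma,S)$ was constructed precisely by applying the monoidal equivalence $\Rep G\to\Rep\breve G$ of Proposition~\ref{P.Tsigma} to $m$, $\eta^\dag$ and $A$, and then invokes Theorem~\ref{T.qiso}. Your additional bookkeeping (the Fourier conjugation, the observation that $\breve T=T$ for endomorphisms so $\breve A=\hat A$, and the appeal to Proposition~\ref{P.qisomulti} to see the result is a genuine quantum graph) just makes explicit what the paper leaves to the preceding computations in Section~\ref{secc.Caytwist}.
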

\begin{proof}
The theorem follows from the considerations at the beginning of this subsection. We constructed the quantum graph exactly by applying the monoidal equivalence $\Rep G\to \Rep \breve G$ on the multiplication $m$, counit $\eta^\dag$, and adjacency operator $A$ corresponding to the classical Cayley graph $\Cay(\Gamma,S)$. Therefore, by Theorem~\ref{T.qiso}, this must define a quantum graph, which is quantum isomorphic to the original one and its quantum symmetries are given by $\breve G$.
\end{proof}

\begin{rem}
As we mentioned in Proposition~\ref{P.qisomulti}, the quantum graph $\widebreve\Cay(\Gamma,S)$ has the same properties regarding having multiple edges, being directed, or having loops as the classical graph $\Cay(\Gamma,S)$. This means the following:
\begin{enumerate}
\item $\widebreve\Cay(\Gamma,S)$ has no multiple edges (unless $S$ is a multiset containing some generators more than once).
\item $\widebreve\Cay(\Gamma,S)$ is undirected if and only if $S$ is closed under the group inversion.
\item If $0\not\in S$, then $\widebreve\Cay(\Gamma,S)$ has no loops. Otherwise it has a loop at every vertex.
\end{enumerate}
\end{rem}

\begin{rem}
Since we do not change the adjacency operator, the resulting graph $\widebreve\Cay(\Gamma,S)$ is isomorphic to the original Cayley graph $\Cay(\Gamma,S)$ if and only if $C(\Gamma)\simeq C(\breve\Gamma)$. Since quantum isomorphisms preserve the dimension of the algebra, this is equivalent to $C(\breve\Gamma)$ being commutative. The commutativity relations on $C(\breve\Gamma)$ are of the form $\breve\tau_\mu\breve\tau_\nu=\bar\sigma_{\mu\nu}\sigma_{\nu\mu}\breve\tau_\nu\breve\tau_\mu$. So, the algebra is commutative if and only if $\sigma$ is symmetric i.e.\ a coboundary.
\end{rem}

\subsection{Comparison with Musto--Reutter--Verdon construction}

As we mentioned in the introduction to Section~\ref{sec.twist}, our construction is actually a special case of the twisting procedure introduced in \cite{MRV19}. So, let us sketch the relationship.

Given a graph $\Gr$, the construction of \cite{MRV19} starts with a subgroup $L\subset\Aut\Gr$ equipped with a 2-cocycle $\sigma$. Then the twist $\C L^\sigma$ of the group algebra $\C L$ is constructed (the same way as we did when we defined $C(\breve\Gamma)$). But now the point is that the group $L$ is in general somewhat unrelated to the underlying set of points of the graph $\Gr$. Based on these data, the article \cite{MRV19} provides a clever but somewhat complicated way of how to construct a quantum space $X$ and an adjacency matrix $A\colon l^2(X)\to l^2(X)$, which define a quantum graph quantum isomorphic to the original one.

Now we claim that for $\Gr=\Cay(\Gamma,S)$ and $L=\Gamma$, the resulting quantum space will be again the twisted group algebra we started with (up to isomorphism), so we obtain exactly our construction. Let us go through the steps of their construction and apply them to our case $L=\Gamma$ to show this.

So, take a unitary bicharacter $\sigma$ on $\Gamma$. Without loss of generality, we can assume that $\sigma$ is alternating. For simplicity, we will assume that the twisted group algebra $\C\Gamma^\sigma$ is simple. (In fact, we have to assume this since the construction of \cite{MRV19} relies on this fact. Otherwise, we would need to take as suitable subgroup $L\subset\Gamma$.) Being simple means that $\C\Gamma^\sigma$ is a matrix algebra. Denote by $H$ the Hilbert space on which it acts, so $\C\Gamma^\sigma=\End H$, and by $U_\mu$, $\mu\in\Gamma$ the corresponding unitary generators.\footnote{Such a concrete realization is also known as a \emph{nice unitary error basis}, see \cite{KR03}.} 

The construction of \cite{MRV19} starts by defining a certain map $H\otimes H^*\otimes l^2(\Gamma)\to l^2(\Gamma)\otimes H\otimes H^*$ by \cite[Eq.~(68)]{MRV19}. This map can be rotated to a linear operator $\pi\colon H^*\otimes l^2(\Gamma)\otimes H\to H^*\otimes l^2(\Gamma)\otimes H$. In our case, it is of the form
$$\pi={1\over N}\sum_{\alpha\in\Gamma}\bar U_\alpha\otimes\phi_\alpha\otimes U_\alpha,$$
where $\phi_\alpha\in\Aut\Cay(\Gamma,S)$ denotes the translation automorphism of the Cayley graph induced by the group element $\alpha$, i.e.\ $\phi_\alpha(\beta)=\beta+\alpha$. Now it is convenient to use the identification $H^*\otimes H\simeq\End H$, so we can express $\pi$ as an operator on $\End H\otimes l^2(\Gamma)$. We can then conveniently express this operator in the basis $(U_\alpha)$ as
$$U_\mu\otimes\delta_\beta\mapsto\frac{1}{N}\sum_{\alpha\in\Gamma}U_\alpha^{\dag}U_\mu U_\alpha\otimes\delta_{\alpha+\beta}.$$
In fact, $\pi$ is an orthogonal projection, which can be easily checked by direct computation. As we already mentioned several times, it is more convenient to express everything in the Fourier basis $(\tau_\mu)$ rather than the basis of the canonical projections $(\delta_\beta)$. The action of $\Gamma$ was already expressed in Equation~\eqref{eq.FPi} as $\phi_\alpha(\tau_\nu)=\tau_\nu(\alpha)\tau_\nu$. So, in the basis $(U_\mu\otimes\tau_\nu)$ the projection $\pi$ acts diagonally by
$$U_\mu\otimes\tau_\nu\mapsto
\rho_{\mu\nu}\,U_\mu\otimes\tau_\nu,\quad
\rho_{\mu\nu}=\frac{1}{N}\sum_{\alpha\in\Gamma}\bar\sigma_{\mu\alpha}\sigma_{\alpha\mu}\tau_\nu(\alpha).$$
Now, one can compute that $\sum_\nu\rho_{\mu\nu}=1=\sum_\mu\rho_{\mu\nu}$ (in the latter, we use the simplicity of $\C\Gamma^\sigma$), so $(\rho_{\mu\nu})$ is a permutation matrix. Denoting the corresponding permutation by $\rho\colon\Gamma\to\Gamma$, we can write $\rho_{\mu\nu}=\delta_{\mu\rho(\nu)}$. As a consequence, the range of $\pi$ has dimension $N$. Actually, one can prove that $\rho$ is an automorphism of the group $\Gamma$.

Second step is defining $l^2(X):=\mathop{\rm Ran}\pi$ to be the range of $\pi$ and thus decomposing $\pi$ as $\pi=\iota\iota^\dag$ with $\iota\colon l^2(X)\to H^*\otimes l^2(\Gamma)\otimes H\simeq\End H\otimes l^2(\Gamma)$ being the corresponding isometry. This is quite straightforward here: We just denote the images of the basis vectors by $\breve\tau_{\nu}:=U_{\rho(\nu)}\otimes\tau_\nu\in l^2(X)$. Now, we rotate the $\iota$ to obtain a map $P\colon H\otimes l^2(X)\to l^2(\Gamma)\otimes H$, which is now given by $x\otimes\breve\tau_\nu\mapsto\tau_\nu\otimes U_{\rho(\nu)}x$ for any $x\in H$ and $\nu\in\Gamma$.

Finally, the article \cite{MRV19} claims that $P$ is unitary and hence defines a ``quantum bijection'' which can be used to define a quantum graph, which is quantum isomorphic to the original one in the following way: We define $P^{\OT k}\colon H\otimes (l^2(X))^{\otimes k}\to (l^2(X))^{\otimes k}\otimes H$ by $k$-times composing $P$ in the $H$ tensor factor. For any $T\in\RCat_G(k,l)$, $G=\Aut^+\Cay(\Gamma,S)$, we define $\breve T\colon l^2(X)^{\otimes k}\to l^2(X)^{\otimes l}$ by $\breve T:=\frac{1}{N}\Tr_H(P^{\OT l\,\dag}TP^{\OT k})$, which induces a new monoidal $\dag$-category isomorphic to $\RCat_G$. In particular, it defines a multiplication $\breve m$, a unit $\breve\eta$, and an adjacency matrix $\breve A$ on $l^2(X)$.

In our case, we get $\breve m(\breve\tau_\mu\otimes\breve\tau_\nu)=\frac{1}{N}\Tr(U_{\rho(\mu+\nu)}^\dag U_{\rho(\nu)} U_{\rho(\mu)})\breve\tau_{\mu+\nu}=\bar\sigma_{\rho(\nu)\rho(\mu)}\breve\tau_{\mu+\nu}$, which is up to the order and up to the automorphism $\rho$ exactly the multiplication in the twisted group algebra we started with. A straightforward computation also shows that $\breve\eta=\breve\tau_0$ and $\breve A\breve\tau_\mu=\lambda_\mu\breve\tau_\mu$, so we indeed exactly recover our construction.

\section{Twisting $\Z_n\times\Z_n$}
\label{sec.rook}

The aim of this section is to construct the quantum space $M_n$ as a twist of the classical space $X_{n^2}$. For this purpose, we choose the group $\Gamma=\Z_n\times\Z_n$ as it has exactly the correct size $N=n^2$. This allows not only to construct quantum analogues of the Cayley graphs over $\Gamma$ (such as the rook's graph), but it also incidentally allows to show that
\begin{itemize}
\item $M_n$ is quantum isomorphic to $X_{n^2}$,
\item consequently, any two quantum spaces $X$ and $Y$ are quantum isomorphic if and only if the number of vertices coincide $\dim C(X)=\dim C(Y)$,
\item the quantum symmetry group of $M_n$ is $PO_n^+$,
\item and to derive explicitly the monoidal equivalence $PO_n^+\sim S_{n^2}^+$.
\end{itemize}

None of these results is actually new \cite{Ban99,Ban02,RV10,GProj}, but the twisting procedure provides a new viewpoint on these results.

\subsection{The comodule algebra twist}
Denote by $\epsilon_1,\epsilon_2$ the canonical generators of $\Gamma=\Z_n\times\Z_n$ and by $\tau_1,\tau_2$ the corresponding generators of $C(\Gamma)\simeq\C\Gamma$. That is,
$$C(\Gamma)=C^*(\tau_1,\tau_2\mid \tau_1^n=\tau_2^n=\tau_1\tau_1^*=\tau_1^*\tau_1=\tau_2\tau_2^*=\tau_2^*\tau_2=1,\;\tau_1\tau_2=\tau_2\tau_1)$$

We can define a unitary bicharacter $\sigma$ on $\Gamma$ by $\sigma(\epsilon_1,\epsilon_2)=\sigma(\epsilon_1,\epsilon_1)=\sigma(\epsilon_2,\epsilon_2)=1$ and $\sigma(\epsilon_2,\epsilon_1)=\omega$, where $\omega={\rm e}^{2\pi\im/n}$ is a primitive $n$-th root of unity. One can easily compute that this extends to the whole group $\Gamma$ as
$$\sigma_{abcd}:=\sigma(a\epsilon_1+b\epsilon_2,c\epsilon_1+d\epsilon_2)=\omega^{bc}.$$

Applying the corresponding cocycle twist, we get the comodule algebra
$$C(\breve\Gamma)=C^*(\breve\tau_1,\breve\tau_2\mid \breve\tau_1^n=\breve\tau_2^n=\breve\tau_1\breve\tau_1^*=\breve\tau_1^*\breve\tau_1=\breve\tau_2\breve\tau_2^*=\breve\tau_2^*\breve\tau_2=1,\;\breve\tau_1\breve\tau_2=\omega\,\breve\tau_2\breve\tau_1).$$

\begin{lem}
The algebra $C(\breve\Gamma)$ is isomorphic to $M_n(\C)$ via
\begin{align*}
\phi(\tau_1^a\tau_2^b)&=
\begin{pmatrix}
&&&1\\
&&&&\ddots\\
&&&&&\omega^{ab}\\
\omega^{a(b+1)}\\
&\ddots\\
&&\omega^{an}\\
\end{pmatrix}\\
\phi^{-1}(e_{ij})&=
\frac{1}{n}\sum_{a}\omega^{-ia}\tau_1^a\tau_2^{i-j}
\end{align*}
\end{lem}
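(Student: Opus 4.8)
The plan is to verify directly that the displayed $\phi$ and the displayed $\phi^{-1}$ (call it $\psi$) are a pair of mutually inverse unital $*$-homomorphisms between $C(\breve\Gamma)$ and $M_n(\C)$. Nothing beyond the defining relations of $C(\breve\Gamma)$ for $\Gamma=\Z_n\times\Z_n$ and elementary character sums over $\Z_n$ is needed; this is the standard identification of a $2$-cocycle-twisted group algebra with a matrix algebra via ``clock and shift'' matrices.

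First I would construct $\phi$. Reading the displayed formula at $(a,b)=(1,0)$ and $(0,1)$ sets $U:=\phi(\breve\tau_1)$ to be the clock matrix $\operatorname{diag}(\omega,\omega^2,\dots,\omega^n)$ and $V:=\phi(\breve\tau_2)$ the cyclic shift with $Ve_k=e_{k+1}$ (indices mod $n$). A one-line computation gives $U^*U=V^*V=I$, $U^n=V^n=I$, and $UV=\omega VU$, which are exactly the relations presented for $C(\breve\Gamma)$ in this section. Since these relations force the generators to be unitary, hence bounded, the universal C*-algebra $C(\breve\Gamma)$ admits a unique unital $*$-homomorphism $\phi\colon C(\breve\Gamma)\to M_n(\C)$ with $\breve\tau_i\mapsto$ these matrices, and then $\phi(\breve\tau_1^a\breve\tau_2^b)=U^aV^b$; multiplying out $U^aV^b$ matches the displayed matrix.

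Next I would construct $\psi\colon M_n(\C)\to C(\breve\Gamma)$, defining it on matrix units by $\psi(e_{ij})=\frac1n\sum_a\omega^{-ia}\breve\tau_1^a\breve\tau_2^{\,i-j}$ and extending linearly. Using the iterated relation $\breve\tau_2^{\,p}\breve\tau_1^{\,b}=\omega^{-pb}\breve\tau_1^{\,b}\breve\tau_2^{\,p}$ together with $\sum_{k=1}^n\omega^{ks}=n$ if $n\mid s$ and $0$ otherwise, one checks $\psi(e_{ij})\psi(e_{kl})=\delta_{jk}\psi(e_{il})$; taking adjoints of the defining formula and relabelling gives $\psi(e_{ij})^*=\psi(e_{ji})$; and $\sum_i\psi(e_{ii})=\frac1n\sum_a\big(\sum_i\omega^{-ia}\big)\breve\tau_1^a=1$. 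So $\psi$ is a unital $*$-homomorphism. It remains to see $\phi$ and $\psi$ are inverse: the $(l,k)$-entry of $\phi(\psi(e_{ij}))=\frac1n\sum_a\omega^{-ia}U^aV^{\,i-j}$ equals $\delta_{l,\,k+i-j}\cdot\frac1n\sum_a\omega^{a(l-i)}=\delta_{l\equiv i}\,\delta_{k\equiv j}$, so $\phi\circ\psi=\mathrm{id}_{M_n(\C)}$; and $\psi(U)=\sum_k\omega^k\psi(e_{kk})=\breve\tau_1$, $\psi(V)=\sum_k\psi(e_{k+1,k})=\breve\tau_2$ by the same Fourier inversion, so $\psi\circ\phi$ is a homomorphism agreeing with the identity on the generators of $C(\breve\Gamma)$, whence $\psi\circ\phi=\mathrm{id}$. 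Thus $\phi$ is a $*$-isomorphism with inverse $\psi$.

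I do not expect a genuine obstacle: the whole argument is the elementary clock--shift computation. The only points needing care are the bookkeeping of the $\omega$-powers when commuting $\breve\tau_1$ past $\breve\tau_2$ inside the product $\psi(e_{ij})\psi(e_{kl})$, and matching the index conventions of the displayed matrix; both are routine. (Alternatively, one could skip the verification that $\psi\circ\phi=\mathrm{id}$ by noting that $C(\breve\Gamma)$ is spanned by the $n^2$ monomials $\breve\tau_1^a\breve\tau_2^b$, so $\dim C(\breve\Gamma)\le n^2=\dim M_n(\C)$, and then surjectivity of $\phi$ — which follows since its image contains the basis $\{U^aV^b\}$ of $M_n(\C)$ — already forces $\phi$ to be bijective.)
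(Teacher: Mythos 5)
Your proposal is correct and follows essentially the same route as the paper: realize $\breve\tau_1,\breve\tau_2$ as the clock and shift matrices, invoke the universal property to get $\phi$, and use the character sum $\sum_a\omega^{as}=n\delta_{s\equiv 0}$ to verify the inverse formula. You do somewhat more than the paper (which only checks the relations for $\phi$ and the single composition $\phi\circ\phi^{-1}=\id$ in entries), but the extra verifications that $\psi$ is a unital $*$-homomorphism and that $\psi\circ\phi=\id$ are correct and only strengthen the argument.
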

\begin{proof}
It is straightforward to check that the matrices
$$\phi(\breve\tau_1)=
\begin{pmatrix}
1\\
&\omega\\
&&\omega^2\\
&&&\ddots\\
&&&&\omega^{n-1}
\end{pmatrix}
,\qquad
\phi(\breve\tau_2)=
\begin{pmatrix}
  &   &&& 1\\
1 &   &&&\\
  & 1 &&&\\
&&\ddots&&\\
&&&1& 
\end{pmatrix}
$$
satisfy the defining relations of $C(\breve\Gamma)$.

Secondly, we need to check that the formulas for $\phi$ and $\phi^{-1}$ are indeed inverse to each other. In order to do so, let us denote the entries of $\phi$ and $\phi^{-1}$, so $\phi^{ij}_{ab}=\delta_{b,i-j}\omega^{ia}$, $[\phi^{-1}]^{ab}_{ij}=\frac{1}{n}\delta_{b,i-j}\omega^{-ia}$. Now, we can compute
\[[\phi\phi^{-1}]^{ij}_{kl}=\frac{1}{n}\sum_{a,b}\delta_{b,i-j}\delta_{b,k-l}\omega^{ia}\omega^{-ka}=\frac{1}{n}\delta_{i-j,k-l}\sum_a\omega^{a(i-k)}=\delta_{ik}\delta_{jl}.\qedhere\]
\end{proof}

As a consequence, we can prove that all quantum spaces with $N$ points are quantum isomorphic. This result is actually implicitly contained already in the work of Banica \cite{Ban99,Ban02} and later it was formulated as an explicit statement in \cite[Theorem~4.7]{RV10}. See also \cite{GProj}.

\begin{prop}
The quantum spaces $X_{n^2}$ and $M_n$ are quantum isomorphic.
\end{prop}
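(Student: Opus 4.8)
The plan is to combine two facts that are already in place. The first is that the comodule algebra twist of Section~\ref{secc.twist} makes the quantum space $\breve\Gamma$ quantum isomorphic to $\Gamma$; the second is the preceding lemma, which exhibits a $*$-isomorphism $C(\breve\Gamma)\cong M_n(\C)$ and hence identifies $\breve\Gamma$ with $M_n$. Since $\Gamma=\Z_n\times\Z_n$ is a classical set of $n^2$ points, $C(\Gamma)$ is commutative and $\Gamma\simeq X_{n^2}$. Chaining these, $X_{n^2}\simeq\Gamma$ is quantum isomorphic to $\breve\Gamma\simeq M_n$, which is the assertion.

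For the first fact I would invoke Theorem~\ref{T3} with the empty connection set $S=\emptyset$: then $\Cay(\Gamma,\emptyset)$ is simply the empty graph on $X_{n^2}$ and $\widebreve\Cay(\Gamma,\emptyset)$ is the empty graph on $\breve\Gamma$, and the theorem says the latter is quantum isomorphic to the former. Because the adjacency matrix is zero in both cases, the quantum automorphism group of each graph reduces to the quantum symmetry group of the underlying space (the defining relation $Au=uA$ becomes vacuous), so this is precisely a quantum isomorphism of the quantum spaces $\Gamma$ and $\breve\Gamma$. Alternatively, one can read it straight off Section~\ref{secc.Caytwist}: the twist functor $T\mapsto\breve T$ of Proposition~\ref{P.Tsigma} is a monoidal $\dag$-equivalence $\Rep S^+_\Gamma\to\Rep S^+_{\breve\Gamma}$ sending the multiplication $m$ of $C(\Gamma)$ to that of $C(\breve\Gamma)$ and the counit $\eta^\dag$ to $\breve\eta^\dag$, which is exactly the data of a quantum isomorphism of quantum spaces.

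The only point needing a word of care is that the $*$-isomorphism $\phi\colon C(\breve\Gamma)\to M_n(\C)$ of the lemma is an isomorphism of quantum \emph{spaces}, i.e.\ that it is compatible with the symmetric Frobenius structures; this is automatic since each side carries the \emph{unique} tracial $\delta$-form. Concretely, one verifies $n\Tr(\phi(\breve\tau_\mu))=n^2\delta_{\mu,0}=\breve\eta^\dag(\breve\tau_\mu)$, using $\phi(\breve\tau_0)=I$ and the fact that $\phi(\breve\tau_\mu)$ is a zero-trace ``$\omega$-twisted permutation matrix'' for $\mu\neq 0$. There is no genuine obstacle here: the real work was done in the twist construction of Section~\ref{secc.twist}, the transfer Theorem~\ref{T.qiso}, and the explicit matrix model of the lemma above, and the present proposition is merely their assembly.
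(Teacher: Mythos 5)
Your proposal is correct and follows essentially the same route as the paper: the paper's proof is simply ``$M_n$ is a twist of $X_{n^2}$ (by the preceding lemma), hence quantum isomorphic by Theorem~\ref{T3}.'' You merely make explicit what the paper leaves implicit --- taking $S=\emptyset$ so that Theorem~\ref{T3} degenerates to a statement about the underlying quantum spaces (equivalently, reading the monoidal equivalence of Proposition~\ref{P.Tsigma} directly on $m$ and $\eta^\dag$), and checking that $\phi$ respects the Frobenius structures --- and these checks are all sound.
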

\begin{proof}
We just proved that $M_n$ is a twist of $X_{n^2}$. By Theorem~\ref{T3}, they must be quantum isomorphic.
\end{proof}

\begin{thm}
All quantum spaces $X$ with fixed number of vertices $N=\dim C(X)$ are mutually quantum isomorphic.
\end{thm}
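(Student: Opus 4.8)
The plan is to exploit the fact that quantum isomorphism is an equivalence relation, so it suffices to show that every finite quantum set $X$ with $\dim C(X)=N$ is quantum isomorphic to the classical set $X_N$. Writing $C(X)=\bigoplus_{i=1}^a M_{n_i}(\C)$, we have $N=\sum_{i=1}^a n_i^2$, and $X$ is the disjoint union (coproduct) $M_{n_1}\sqcup\cdots\sqcup M_{n_a}$, i.e.\ the finite quantum set whose algebra is the direct sum and whose Frobenius structure is the direct sum of the individual ones; one checks this direct sum is again the unique tracial $\delta$-form, so in particular $\eta^\dag$ and $m$ on $C(X)$ are the direct sums of the $\eta^\dag$'s and $m$'s of the factors. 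By the preceding proposition each $M_{n_i}$ is quantum isomorphic to the classical set $X_{n_i^2}$, and the disjoint union $X_{n_1^2}\sqcup\cdots\sqcup X_{n_a^2}$ is just the classical set $X_{n_1^2+\cdots+n_a^2}=X_N$. Hence the theorem reduces to knowing that quantum isomorphism is compatible with disjoint unions.

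The key step is therefore the lemma: if $X_1$ is quantum isomorphic to $Y_1$, then $X_1\sqcup X_2$ is quantum isomorphic to $Y_1\sqcup X_2$ for any fixed quantum set $X_2$; iterating this over the $a$ factors of $X$ then gives what the first paragraph requires. To prove the lemma I would pass from Definition~\ref{D.qiso} to the equivalent operator-algebraic description of a quantum isomorphism from \cite{BCE+20} (see also \cite{MRV19,LMR20,MR19}): a quantum isomorphism $X_1\to Y_1$ is encoded by a nonzero C*-algebra $\mathcal C$ together with a unitary $U\colon C(X_1)\otimes\mathcal C\to C(Y_1)\otimes\mathcal C$ intertwining the multiplications, units and counits (and, for graphs, the adjacency operators). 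Given such a pair $(\mathcal C,U_1)$ realising $X_1\cong Y_1$ and the trivial quantum isomorphism $U_2=\id_{C(X_2)}\otimes\id_{\mathcal C}$ over the same algebra $\mathcal C$, the direct sum $U:=U_1\oplus U_2$ is a unitary $C(X_1\sqcup X_2)\otimes\mathcal C\to C(Y_1\sqcup X_2)\otimes\mathcal C$. Since the multiplication, unit, counit and (zero) adjacency operator of a disjoint union are the direct sums of those of the summands, $U$ automatically satisfies all the defining conditions and hence witnesses that $X_1\sqcup X_2$ and $Y_1\sqcup X_2$ are quantum isomorphic.

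The main obstacle is precisely this lemma: the quantum symmetry group $S^+_{X_1\sqcup X_2}$ is strictly larger than anything assembled from $S^+_{X_1}$ and $S^+_{X_2}$ (already $S^+_{n+m}$ is much bigger than a free product of $S^+_n$ and $S^+_m$), so the compatibility cannot be read off directly from the automorphism groups or their representation categories, and one genuinely needs a concrete model of quantum isomorphism — such as the magic-unitary picture above — in which a quantum isomorphism can be restricted to a direct summand of the underlying algebra. Once the lemma is available, everything else is the bookkeeping of the first paragraph together with the proposition preceding the statement. (As a side remark, Proposition~\ref{P.qisomulti}(6) shows that the hypothesis $N=\dim C(X)$ is also necessary for quantum isomorphism, so finite quantum sets are classified up to quantum isomorphism precisely by their number of vertices.)
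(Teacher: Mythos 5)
Your proposal is correct, and its overall skeleton — decompose $C(X)=\bigoplus_{i}M_{n_i}(\C)$, invoke the preceding proposition to get that each $M_{n_i}$ is quantum isomorphic to $X_{n_i^2}$, and then combine these over the direct sum to conclude $X$ is quantum isomorphic to $X_N$ — is exactly the paper's. Where you genuinely diverge is at the combination step. The paper stays entirely inside the categorical Definition~\ref{D.qiso}: it notes that the representation categories of $S^+_X$ and $S^+_{X_N}$ are generated by $\bigoplus_i\tilde m_i,\ \bigoplus_i\tilde\eta_i$ and $\bigoplus_i m_i,\ \bigoplus_i\eta_i$ respectively, and asserts that the blockwise monoidal $*$-isomorphisms therefore assemble into one between the generated categories. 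You instead pass to the operator-algebraic (bigalois / quantum-bijection) model of \cite{BCE+20} and take a direct sum of unitaries, padding the untouched summand with the identity. Both routes work. Yours buys a more concrete and elementary verification; the only point you gloss over is that the multiplication of a direct sum algebra is not literally $m_1\oplus m_2$ on $(A_1\oplus A_2)^{\otimes 2}$ — it also annihilates the cross components $A_1\otimes A_2$ — but since both sides of the intertwining relation vanish on those components, the check does go through. The price is that you must import the equivalence of Definition~\ref{D.qiso} with the bigalois picture, which the paper cites but does not develop. Your aside that the compatibility ``cannot be read off from the representation categories'' overstates the case — the paper does precisely that — though its justification that a family of monoidal isomorphisms of the categories generated by $\tilde m_i,\tilde\eta_i$ induces one of the category generated by the direct sums is itself quite terse, so your concrete model arguably fills a gap rather than merely duplicating the argument. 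Your closing observation that Proposition~\ref{P.qisomulti}(6) makes the hypothesis $N=\dim C(X)$ necessary is correct and matches the discussion in Section~\ref{sec.rook}.
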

\begin{proof}
Given a quantum space $X$ with $\dim C(X)=N$, we need to prove that $X$ is quantum isomorphic to $X_N$. We know that $X$ must be of the form $C(X)=\bigoplus_i M_{n_i}(\C)$. Denote by $m_i,\eta_i$ the multiplication and counit on $C(X_{n_i^2})$ and by $\tilde m_i,\tilde\eta_i$ the multiplication and counit on $M_{n_i}(\C)$. We know that each summand $M_{n_i}$ is quantum isomorphic with $X_{n_i^2}$. That is, there is a monoidal $*$-isomorphism of the representation categories $\Rep S_{M_n}^+$ and $\Rep S_{n_i^2}^+$ mapping $\tilde m_i\leftrightarrow m_i$ and $\tilde\eta_i\leftrightarrow\eta_i$. Moreover, the representation categories are by definition generated by $\tilde m_i,\tilde\eta_i$, resp. $m_i,\eta_i$. Consequently, if we construct the representation categories generated by $\tilde m:=\bigoplus \tilde m_i$ and $\tilde \eta:=\bigoplus \tilde\eta_i$, resp. $m:=\bigoplus m_i$ and $\eta:=\bigoplus\eta_i$, then those must be monoidally $*$-isomorphic as well. But those are exactly the representation categories associated to $C(X)=\bigoplus M_{n_i}(\C)$ and $C(X_N)=\bigoplus C(X_{n_i^2})$. Hence $X$ is quantum isomorphic with $X_N$.
\end{proof}

\subsection{Quantum automorphism group of $M_n$}

It was proven already in \cite[Corollary~4.1]{Ban99} that the quantum automorphism group of $M_n$ is the projective free orthogonal quantum group $PO_n^+$. We derive this result again here. As follows from the considerations in the previous subsection, the quantum automorphism group has to be monoidally equivalent with the free symmetric quantum group $S_{n^2}^+$. We derive explicitly this monoidal equivalence.

So let us follow the considerations of Section~\ref{secc.Caytwist}. In the following text, instead of indexing the intertwiners using Greek letters $\alpha\in\Gamma=\Z_n\times\Z_n$, we will use pairs of indices $a,b$, which will stand for the element $\tau_1^a\tau_2^b$. So, we can write
$$\hat m^{xy}_{abcd}=\delta_{a+c,x}\delta_{b+d,y},\qquad\hat R^{abcd}=\frac{1}{n^2}\delta_{a,-c}\delta_{b,-d}.$$
All indices are taken modulo $n$ here.

Now, we apply the cocycle twist:
\begin{align*}
\breve m^{xy}_{abcd}&=\bar\sigma_{abcd}\delta_{a+c,x}\delta_{b+d,y}=\omega^{-bc}\delta_{a+c,x}\delta_{b+d,y},\\
\breve R^{abcd}&=\frac{1}{n^2}\sigma_{abcd}\delta_{a,-c}\delta_{b,-d}=\frac{1}{n^2}\omega^{bc}\delta_{a,-c}\delta_{b,-d}.
\end{align*}

We could write down the definition of $\breve G$ now. However, we would not yet recognize the quantum group $PO_n^+$, because we expressed everything in the twisted Fourier basis $(\breve\tau_1^a\breve\tau_2^b)$. Instead, we would like to use the canonical orthonormal basis for $M_n$, which is $(\frac{1}{\sqrt{n}}\,e_{ij})$. We will not use the basis $(e_{ij})$ since it is not orthonormal (recall Example~\ref{ex.Mn}) and hence the involution $\dag$ would not act simply as conjugate transposition, which might be confusing. So, denote $\tilde m:=n^{-1/2}\,\phi\circ \breve m\circ(\phi^{-1}\otimes\phi^{-1})$ and $\tilde R:=n(\phi\otimes\phi)\breve R$ and compute them:
\begin{align*}
\tilde R^{ijkl}&=n\sum_{a,b,c,d}\phi^{ij}_{ab}\phi^{kl}_{cd}\breve R^{abcd}=\frac{1}{n}\sum_{a,b,c,d}\delta_{b,i-j}\omega^{ia}\delta_{d,k-l}\omega^{kc}\omega^{bc}\delta_{a,-c}\delta_{b,-d}=\\
&=\frac{1}{n}\delta_{i-j,l-k}\sum_a \omega^{a(j-k)}=\delta_{jk}\delta_{il}\displaybreak[1]\\
\tilde m^{rs}_{ijkl}&=\frac{1}{\sqrt{n}}\sum_{a,b,c,d,x,y}\phi^{rs}_{xy}[\phi^{-1}]^{ab}_{ij}[\phi^{-1}]^{cd}_{kl}\breve m^{xy}_{abcd}\\
&=\frac{1}{n^{5/2}}\sum_{a,b,c,d,x,y}\delta_{y,r-s}\omega^{rx}\delta_{b,i-j}\omega^{-ia}\delta_{d,k-l}\omega^{-kc}\omega^{-bc}\delta_{a+c,x}\delta_{b+d,y}\\
&=\frac{1}{n^{5/2}}\delta_{r-s,i-j+k-l}\sum_{a,c}\omega^{a(r-i)}\omega^{c(r-k-i+j)}=\frac{1}{\sqrt{n}}\delta_{ri}\delta_{kj}\delta_{sl}
\end{align*}

Readers familiar with the idea of modelling the representation category of $S_N^+$ using partitions may appreciate rewriting the monoidal $*$-isomorphism $R\mapsto\tilde R$, $m\mapsto\tilde m$ using partitions as
$$
{\def\Pwidth{0.2}\Laa}\mapsto\LPartition{}{0.65:0.9,2.1;0.5:1.1,1.9},
\qquad
{\def\Pwidth{0.2}\mergepart}\mapsto\frac{1}{\sqrt n}
\Partition{
\Pblock 1to0.60:1.1,1.9
\Pline (0.9,1)(0.9,0.45)
\Pline (0.9,0.45)(1.4,0.45)
\Pline (1.4,0.45)(1.4,0)
\Pline (2.1,1)(2.1,0.45)
\Pline (2.1,0.45)(1.6,0.45)
\Pline (1.6,0.45)(1.6,0)
},
$$
which corresponds to a well known procedure of ``fattening partitions''. Here, the partitions are (exceptionally) meant to be read from top to bottom, but reading them from bottom to top actually also produces a true statement. The partition on the left-hand side is intentionally drawn in thick lines in order to emphasise the fact that each point/object is actually represented by a pair of indices in the equations. See also \cite[Section~4]{KS08}, where this isomorphism was proven in a diagrammatic way.

\subsection{Quantum rook's graph}

Equipping $\Gamma=\Z_n\times\Z_n$ with any set $S\subset\Gamma$, we get a Cayley graph $\Cay(\Gamma,S)$. Using the twisting procedure above, we obtain a quantum analogue of this Cayley graph, which is defined over the quantum space $\breve\Gamma=M_n$. For instance, we can start with the \emph{rook's graph}, which is the Cayley graph of $\Z_n\times\Z_n$ with respect to the generating set $\{a\epsilon_1\}_{a=1}^{n-1}\cup\{b\epsilon_2\}_{b=1}^{n-1}$.

Using formula~\eqref{eq.lambda}, we can compute the spectrum of rook's graph, which also gives us the adjacency matrix in the Fourier basis $\hat A^{ab}_{cd}=\delta_{ac}\delta_{bd}\lambda_{ab}$, where
$$\lambda_{ab}=\sum_{i=1}^{n-1}\omega^{ia}+\sum_{j=1}^{n-1}\omega^{jb}=n\delta_{a,0}+n\delta_{b,0}-2.$$

By twisting, the adjacency matrix does not change, so $\breve A=\hat A$. Finally, we can compute its entries in the basis $(e_{ij})$ as
\begin{align*}
\tilde A^{ij}_{kl}
&=\sum_{a,b}\phi^{ij}_{ab}[\phi^{-1}]^{ab}_{kl}\lambda_{ab}
 =\frac{1}{n}\sum_{a,b}\delta_{b,i-j}\delta_{b,k-l}\omega^{a(i-k)}(n\delta_{a,0}+n\delta_{b,0}-2)\\
&=\delta_{i-j,k-l}+n\delta_{ijkl}-2\delta_{ik}\delta_{jl}.
\end{align*}

The quantum space $M_n$ equipped with the adjacency matrix $\tilde A$ given by the equation above may be called the \emph{quantum rook's graph}. It is quantum isomorphic to the ordinary rook's graph.

Note that the quantum automorphism group of the ordinary rook's graph was recently obtained in \cite[Section~6]{GroAbSym}. It is the wreath product $S_n^+\wr\Z_2$. Performing the 2-cocycle twist and adapting the proof from \cite{GroAbSym}, one could derive the structure of the twisted quantum automorphism group.

\section{Anticommutative cube-like graphs}
\label{sec.cube}

\subsection{General idea}

Consider the group $\Gamma=\Z_2^n$ and denote by $\epsilon_1,\dots,\epsilon_n$ the canonical generators of this group. Recall that the algebra of functions $C(\Z_2^n)\simeq\C\Z_2^n$ is spanned by the characters $\tau_\alpha$ and, in particular, it is generated by the characters $\tau_1,\dots,\tau_n$. Here $\tau_i=\tau_{\epsilon_i}$, so $\tau_i(\alpha)=(-1)^{\alpha_i}$ for any $\alpha\in\Gamma$. That is,
$$C(\Z_2^n)=C^*(\tau_1,\dots,\tau_n\mid \tau_i=\tau_i^*,\;\tau_i^2=1,\;\tau_i\tau_j=\tau_j\tau_i).$$

Choosing an appropriate bicharacter, we can make the generators $\tau_i$ anticommute. We define a bicharacter $\sigma$ on $\Z_2^n$ as follows:
\begin{equation}\label{eq.sigma1}
\sigma_{ij}:=\sigma(\epsilon_i,\epsilon_j)=\begin{cases}-1&\text{if $i>j$,}\\+1&\text{if $i\le j$,}\end{cases}
\end{equation}

It is easy to see and it was already mentioned in \cite{AM02} that twisting the algebra $C(\Z_2^n)$ as a comodule algebra using formula \eqref{eq.twistalg}, we obtain exactly the \emph{Clifford algebra}.

\begin{defn}
{\em Clifford algebra} with $n$ generators is defined by generators and relations as follows
$$\Cl_n=C^*(\breve\tau_1,\dots,\breve\tau_n\mid \breve\tau_i=\breve\tau_i^*,\;\breve\tau_i^2=1,\;\breve\tau_i\breve\tau_j=-\breve\tau_j\breve\tau_i).$$
\end{defn}

We are going to use a similar notation for the basis of the Clifford algebra as we did for $C(\Z_2^n)$. Namely, given $\alpha=\epsilon_{i_1}+\cdots+\epsilon_{i_k}\in\Z_2^n$ with $1\le i_1<\cdots<i_k\le n$, we denote $\breve\tau_\alpha:=\breve\tau_{i_1}\cdots\breve\tau_{i_k}$. Then the elements $(\breve\tau_\alpha)$ form a linear basis of $\Cl_n$.

\begin{prop}
Consider $S\subset\Z_2^n$. Then the adjacency operator $A\colon\Cl_n\to\Cl_n$ defined by $A\breve\tau_\mu=\lambda_\mu\tau_\mu$ with $\lambda_\mu=\sum_{\theta\in S}(-1)^{\mu_1\theta_1+\cdots+\mu_n\theta_n}$ defines a quantum graph $\widebreve{\Cay}(\Z_2^n,S)$, which is quantum isomorphic to the classical Cayley graph $\Cay(\Z_2^n,S)$.
\end{prop}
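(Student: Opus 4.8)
The statement is the specialization of Theorem~\ref{T3} to the group $\Gamma=\Z_2^n$ with the bicharacter $\sigma$ of \eqref{eq.sigma1}, combined with the identification of the twisted comodule algebra with the Clifford algebra. So the plan is: (i) check that \eqref{eq.sigma1} really defines a unitary bicharacter; (ii) feed it into \eqref{eq.twistalg} and recognize $C(\breve\Gamma)$ as $\Cl_n$; (iii) quote Theorem~\ref{T3} and match the eigenvalues $\lambda_\mu$.

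First I would verify that the prescription \eqref{eq.sigma1} extends to a well-defined $\sigma\colon\Z_2^n\times\Z_2^n\to\T$. Since every prescribed value $\sigma(\epsilon_i,\epsilon_j)=\pm1$ satisfies $\sigma(\epsilon_i,\epsilon_j)^2=1$, extending bi-multiplicatively by $\sigma(\alpha,\beta)=\prod_{i,j}\sigma(\epsilon_i,\epsilon_j)^{\alpha_i\beta_j}=(-1)^{\sum_{i>j}\alpha_i\beta_j}$ is compatible with the relations $2\epsilon_i=0$ in $\Z_2^n$, so $\sigma$ is a genuine unitary bicharacter.

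Next I would plug this $\sigma$ into \eqref{eq.twistalg}. Writing $\breve\tau_i:=\breve\tau_{\epsilon_i}$, the relations $\breve\tau_\mu\breve\tau_\nu=\bar\sigma_{\mu\nu}\breve\tau_{\mu+\nu}$ give $\breve\tau_i^2=\sigma_{ii}\breve\tau_0=1$; for $i<j$ they give $\breve\tau_i\breve\tau_j=\sigma_{ij}\breve\tau_{\epsilon_i+\epsilon_j}=\breve\tau_{\epsilon_i+\epsilon_j}$ and $\breve\tau_j\breve\tau_i=\sigma_{ji}\breve\tau_{\epsilon_i+\epsilon_j}=-\breve\tau_{\epsilon_i+\epsilon_j}$, hence $\breve\tau_i\breve\tau_j=-\breve\tau_j\breve\tau_i$; and $\breve\tau_i=\breve\tau_i^*$ comes from the $*$-structure on the twisted algebra (here $\breve\tau_\mu^*=\sigma_{\mu\mu}\breve\tau_{-\mu}=\breve\tau_\mu$ since $-\mu=\mu$). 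These are exactly the defining relations of $\Cl_n$; as the $2$-cocycle twist preserves the dimension of the comodule algebra, $\dim C(\breve\Gamma)=2^n=\dim\Cl_n$, so $C(\breve\Gamma)\simeq\Cl_n$, with $\breve\tau_\mu$ corresponding to the basis $\breve\tau_\alpha$ of $\Cl_n$ introduced above. (This is the fact attributed to \cite{AM02}.)

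Finally, Theorem~\ref{T3} applies verbatim with this $\Gamma$, $\sigma$, and $S$: $\widebreve\Cay(\Z_2^n,S)$ is a quantum graph, quantum isomorphic to the classical $\Cay(\Z_2^n,S)$, with underlying algebra $C(\breve\Gamma)\simeq\Cl_n$ and adjacency operator $\breve A\breve\tau_\mu=\lambda_\mu\breve\tau_\mu$, where $\lambda_\mu=\sum_{\theta\in S}\tau_\mu(-\theta)$. Since $-\theta=\theta$ in $\Z_2^n$ and $\tau_\mu(\theta)=(-1)^{\mu_1\theta_1+\cdots+\mu_n\theta_n}$, this $\lambda_\mu$ is precisely the one in the statement, which finishes the proof. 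There is no real obstacle here; the only points requiring a little attention are the well-definedness of $\sigma$ over $\Z_2^n$ and confirming via the dimension count that the twisted relations generate exactly $\Cl_n$ and nothing larger.
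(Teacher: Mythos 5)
Your proposal is correct and follows essentially the same route as the paper: the paper's proof simply cites Theorem~\ref{T3} and notes that $\lambda_\mu$ matches the classical eigenvalue formula~\eqref{eq.lambda}, with the identification of the twisted algebra as $\Cl_n$ stated in the preceding text. You merely fill in the details (well-definedness of the bicharacter, verification of the Clifford relations) that the paper leaves as remarks.
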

\begin{proof}
Follows directly from Theorem~\ref{T3}. Notice that the formula for $\lambda_\mu$ exactly corresponds to the formula~\eqref{eq.lambda} for eigenvalues of the classical Cayley graph $\Cay(\Gamma,S)$.
\end{proof}

Classical Cayley graphs $\Cay(\Gamma,S)$ with $\Gamma=\Z_2^n$ are called \emph{cube-like}, because they have $2^n$ vertices, which can be naturally arranged to form vertices of $n$-dimensional hypercube.

\begin{defn}
\label{D.anticom}
The quantum graph $\widebreve{\Cay}(\Z_2^n,S)$ constructed in the previous proposition will be called the \emph{anticommutative deformation} of the original cube-like graph $\Cay(\Gamma,S)$.
\end{defn}

\begin{rem}
There is quite an illustrative way how to view these graphs. The classical hypercube graph and any other cube-like graph can be ``realized'' by putting an actual hypercube into the space $\R^n$. The vertices of the hypercube are placed to positions with coordinates $(x_1,\dots,x_n)$, $x_i=\pm 1$. The characters $\tau_i$ can then be understood as the coordinate functions in the $n$ dimensions assigning those values $\pm 1$ to the vertices. By twisting the coordinate algebra $C(\Z_2^n)$ into the Clifford algebra $\Cl_n$, we essentially replace the Euclidean space $\R^n$ by a certain quantum space, where the coordinate functions mutually anticommute.
\end{rem}

\subsection{Anticommutative deformations}
We say that a~matrix $u$ has \emph{anticommutative entries} if the following relations hold
$$u^i_ku^j_k=-u^j_ku^i_k,\quad u^k_iu^k_j=-u^k_ju^k_i,\quad u^i_ku^j_l=u^j_lu^i_k$$
assuming $i\neq j$ and $k\neq l$.

As an example, let us mention the \emph{anticommutative orthogonal quantum group}
$$\Olg(O_N^{-1})=\staralg(u^i_j\mid\text{$u=\bar u$, $u$ orthogonal, $u$ anticommutative}).$$

There is a whole theory about $q$-deformations of classical groups, where taking $q=1$ gives the classical case and $q=-1$ gives usually the anticommutative one, see \cite{KS97} for more details.

\subsection{Anticommutative hypercube graph}

Let us now look on three examples of cube-like graphs and their deformations more in detail. First, the hypercube graph.

\begin{defn}
The anticommutative deformation of the hypercube graph in the sense of Def.~\ref{D.anticom} will be called the \emph{anticommutative hypercube graph} and denoted by $\breve Q_n:=\widebreve\Cay(\Z_2^n,\{\epsilon_i\}_{i=1}^n)$ 
\end{defn}

The quantum automorphism group of the classical hypercube graph $Q_n$ is $O_n^{-1}$ \cite{BBC07}. It is also known that $O_n^{-1}$ is a twist of $O_n$ with respect to exactly the same unitary bicharacter \eqref{eq.sigma1} \cite{BBC07,GWgen}. Applying the twist again, we are able to ``undo'' the anticommutative deformation and obtain $O_n$ again. Consequently, we have the following:

\begin{prop}
The anticommutative hypercube graph $\breve Q_n$ is quantum isomorphic to the classical hypercube graph $Q_n$. Its quantum automorphism group is $O_n$.
\end{prop}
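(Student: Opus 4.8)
The plan is to assemble the statement from two facts already established in the excerpt: first, that $\breve Q_n$ is the anticommutative deformation of $Q_n$ in the sense of Definition~\ref{D.anticom}, which by the preceding proposition is quantum isomorphic to $Q_n$; and second, that the quantum automorphism group of $Q_n$ is $O_n^{-1}$ (cited from \cite{BBC07}), and that $O_n^{-1}$ is itself the $2$-cocycle twist of $O_n$ by the very same unitary bicharacter \eqref{eq.sigma1} (cited from \cite{BBC07,GWgen}).

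First I would recall that, by Theorem~\ref{T3} applied to $\Gamma=\Z_2^n$, $S=\{\epsilon_i\}_{i=1}^n$, and the bicharacter $\sigma$ of \eqref{eq.sigma1}, the quantum graph $\breve Q_n=\widebreve\Cay(\Z_2^n,\{\epsilon_i\})$ is quantum isomorphic to the classical hypercube $Q_n=\Cay(\Z_2^n,\{\epsilon_i\})$, and its quantum automorphism group is the $2$-cocycle twist $\breve G$ of $G=\Aut^+Q_n$. This already gives the quantum isomorphism claim directly. It remains to identify $\breve G$.

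Next I would invoke $G=\Aut^+Q_n=O_n^{-1}$ from \cite{BBC07}. The key observation is that the twisting operation on representation categories defined in Section~\ref{secc.twist} depends only on the restriction of the bicharacter to the diagonal torus $\hat\Gamma\subset G$, and is functorial: twisting by $\sigma$ and then twisting again by the same $\sigma$ (equivalently, by $\bar\sigma$, since $\sigma$ takes values in $\{\pm1\}$ and $\sigma=\bar\sigma$ here) returns $[\widebreve{\breve T}]^{\mathbf i}_{\mathbf j}=\sigma_{\mathbf i}\bar\sigma_{\mathbf j}\sigma_{\mathbf i}\bar\sigma_{\mathbf j}T^{\mathbf i}_{\mathbf j}=T^{\mathbf i}_{\mathbf j}$, so twisting is an involution. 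Since \cite{BBC07,GWgen} tell us $O_n^{-1}$ is the twist of $O_n$ by exactly the bicharacter \eqref{eq.sigma1}, applying the twist once more yields $\breve{G}=\breve{(O_n^{-1})}=O_n$. Therefore the quantum automorphism group of $\breve Q_n$ is $O_n$, as claimed.

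The main obstacle, such as it is, is purely bookkeeping: one must check that the bicharacter used to pass from $O_n$ to $O_n^{-1}$ in \cite{BBC07,GWgen} — restricted to the relevant copy of $\widehat{\Z_2^n}$ sitting diagonally inside $O_n$ via $u^i_j\mapsto g_i\delta_{ij}$ — really is the \emph{same} $\sigma$ of \eqref{eq.sigma1} that we use here, as opposed to a cohomologous one. Since cohomologous cocycles give isomorphic (not merely monoidally equivalent) twists, and the bicharacter is determined by its values on generators, this amounts to matching $\sigma(\epsilon_i,\epsilon_j)=-1$ for $i>j$ in both settings, which is exactly the normalization fixed in \eqref{eq.sigma1}. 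Once this is checked the involutivity argument closes the proof with no further computation.

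\begin{proof}
By Theorem~\ref{T3} applied to $\Gamma=\Z_2^n$, the generating set $S=\{\epsilon_i\}_{i=1}^n$, and the unitary bicharacter $\sigma$ of \eqref{eq.sigma1}, the quantum graph $\breve Q_n=\widebreve\Cay(\Z_2^n,\{\epsilon_i\}_{i=1}^n)$ is quantum isomorphic to the classical Cayley graph $\Cay(\Z_2^n,\{\epsilon_i\}_{i=1}^n)=Q_n$, and its quantum automorphism group is the $2$-cocycle twist $\breve G$ of $G:=\Aut^+Q_n$.

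It remains to show $\breve G=O_n$. Recall that $\Aut^+Q_n=O_n^{-1}$ \cite{BBC07}, and that $O_n^{-1}$ is the $2$-cocycle twist of $O_n$ with respect to the same bicharacter \eqref{eq.sigma1} \cite{BBC07,GWgen}. Now observe that the twisting operation of Section~\ref{secc.twist} is involutive: since $\sigma$ takes values in $\{\pm1\}$ we have $\bar\sigma=\sigma$, hence for any intertwiner $T$,
$$[\widebreve{\breve T}]^{\mathbf i}_{\mathbf j}=\sigma_{\mathbf i}\bar\sigma_{\mathbf j}[\breve T]^{\mathbf i}_{\mathbf j}=\sigma_{\mathbf i}\bar\sigma_{\mathbf j}\sigma_{\mathbf i}\bar\sigma_{\mathbf j}T^{\mathbf i}_{\mathbf j}=T^{\mathbf i}_{\mathbf j},$$
so twisting $\breve\RCat_G$ again by $\sigma$ returns $\RCat_G$. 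Applying this with $G=O_n$, twisting once gives $O_n^{-1}$ and twisting the result once more gives $O_n$ back. Therefore $\breve G$, which is the twist of $G=O_n^{-1}$ by $\sigma$, equals $O_n$.
\end{proof}
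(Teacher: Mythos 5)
Your proposal is correct and follows essentially the same route as the paper: the paper's (informal, pre‑proposition) argument is exactly to invoke Theorem~\ref{T3} for the quantum isomorphism, recall $\Aut^+Q_n=O_n^{-1}$ and that $O_n^{-1}$ is the twist of $O_n$ by the bicharacter \eqref{eq.sigma1}, and then ``apply the twist again'' to undo the deformation. Your explicit check that twisting is an involution (because $\sigma$ is $\{\pm1\}$-valued, so $\sigma_{\mathbf i}^2\bar\sigma_{\mathbf j}^2=1$) just makes precise the step the paper states in words.
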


\subsection{Anticommutative folded hypercube}
First, we recall the situation for the classical case, see \cite[Section~5]{GroAbSym} for more details.

The folded hypercube $FQ_{n+1}$ is a quotient graph of the hypercube $Q_{n+1}$. This means that we can view the function algebra $C(FQ_{n+1})=C(\Z_2^n)$ as a subalgebra of $C(Q_{n+1})=C(\Z_2^{n+1})$ spanned by $\tau_\mu$ of even degree. Namely we use the embedding $\tau_\mu\mapsto\tau_\mu$ when $\deg\mu$ is even and $\tau_\mu\mapsto\tau_\mu\tau_{n+1}$ if $\deg\mu$ is odd, where $\deg\mu$ is the number of \emph{ones} in the tuple $\mu=(\mu_1,\dots,\mu_n)$.

Recall that $O_{n+1}^{-1}$ is the quantum symmetry group of $Q_{n+1}$. The action is via $\tau_i\mapsto\sum_j\tau_j\otimes u^j_i$, where $u$ is the fundamental representation of $O_{n+1}^{-1}$. We define the \emph{projective version} $PO_{n+1}^{-1}$ of $O_{n+1}^{-1}$ to be the quantum group with associated Hopf algebra $\Olg(PO_{n+1}^{-1})$ defined as the subalgebra of $\Olg(O_{n+1}^{-1})$ generated by elements $u^i_ju^k_l$, i.e.\ consisting of even polynomials in $u^i_j$. In \cite[Theorem~5.1]{GroAbSym}, we proved that the quantum symmetry group of $FQ_{n+1}$ is $PO_{n+1}^{-1}$. The corresponding action is given exactly by restricting the above mentioned coaction $C(Q_{n+1})\to C(Q_{n+1})\otimes\Olg(O_{n+1}^{-1})$ to the corresponding subalgebra, so we get $C(FQ_{n+1})\to C(FQ_{n+1})\otimes\Olg(PO_{n+1}^{-1})$.

One can show that the folded hypercube can actually be equivalently constructed by adding the edge connecting each two opposite vertices to the $n$-dimensional hypercube $Q_n$. This means that $FQ_{n+1}$ is a Cayley graph of $\Z_2^n$ with respect to the generating set $\{\epsilon_1,\dots,\epsilon_n,\iota\}$, where $\iota=\epsilon_1+\cdots+\epsilon_n=(1,\dots,1)$. Its spectrum is computed using formula~\eqref{eq.lambda} as
\begin{equation}\label{eq.FQlambda}
\lambda_\mu=\sum_{i=1}^n\tau_\mu(\epsilon_i)+\tau_\mu(\epsilon_1+\cdots+\epsilon_n)=n-2\deg\mu+(-1)^{\deg\mu}=n+1-4\lceil 1/2\,\deg\mu\rceil.
\end{equation}

Now we would like to show that all this holds also for the anticommutative deformation.

\begin{defn}
The anticommutative deformation of the folded hypercube graph $FQ_{n+1}$ in the sense of Def.~\ref{D.anticom} will be called the \emph{anticommutative folded hypercube graph} and denoted by $\widebreve{FQ}_{n+1}:=\widebreve\Cay(\Z_2^n,\{\epsilon_i\}_{i=1}^n\cup\{\epsilon_1+\cdots+\epsilon_n\})$ 
\end{defn}

First, we have by construction the following result.

\begin{prop}
The anticommutative folded hypercube $\widebreve{FQ}_{n+1}$ is quantum isomorphic to the classical folded hypercube $FQ_{n+1}$. Assuming $n\neq 1,3$, it has quantum automorphism group $PO_{n+1}$.
\end{prop}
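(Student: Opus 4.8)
The plan is to follow the pattern used for the anticommutative hypercube $\breve Q_n$. The first claim is immediate from Theorem~\ref{T3}: by definition $\widebreve{FQ}_{n+1}=\widebreve\Cay(\Z_2^n,S)$ with $S=\{\epsilon_1,\dots,\epsilon_n,\epsilon_1+\dots+\epsilon_n\}$, hence it is quantum isomorphic to the classical Cayley graph $\Cay(\Z_2^n,S)=FQ_{n+1}$, and $\Aut^+\widebreve{FQ}_{n+1}=\breve G$ is the $2$-cocycle twist of $G:=\Aut^+FQ_{n+1}$ by the bicharacter $\sigma$ of \eqref{eq.sigma1} on the translation group $\Z_2^n$ of $FQ_{n+1}$ (which embeds into $G$ via the classical translation action, so that the twist of Section~\ref{secc.Caytwist} is well-defined).

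For the second claim I would first invoke \cite[Theorem~5.1]{GroAbSym}, which gives $G=PO_{n+1}^{-1}$; this is the only place the restriction $n\neq 1,3$ enters, since for those two values the folded hypercube has strictly larger quantum symmetry. It then suffices to show $\breve{(PO_{n+1}^{-1})}=PO_{n+1}$, and two ingredients are needed. First, the twisting functor $T\mapsto\breve T$ of Proposition~\ref{P.Tsigma} is involutive in our situation: any unitary bicharacter on $\Z_2^n$ automatically takes values in $\{\pm1\}$, so $[\widebreve{\breve T}]^{\mathbf i}_{\mathbf j}=\sigma_{\mathbf i}^2\bar\sigma_{\mathbf j}^2T^{\mathbf i}_{\mathbf j}=T^{\mathbf i}_{\mathbf j}$, and consequently $\breve{\breve H}=H$ for any quantum group $H$ on which this twist is defined. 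Second — and this is the real content — $PO_{n+1}^{-1}$ is itself the $\sigma$-twist of $PO_{n+1}$. Granting this, $\breve{(PO_{n+1}^{-1})}=\breve{\breve{(PO_{n+1})}}=PO_{n+1}$, as desired.

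To establish $PO_{n+1}^{-1}=\breve{(PO_{n+1})}$ I would start from the known fact $O_{n+1}^{-1}=\breve{(O_{n+1})}$ with respect to the bicharacter \eqref{eq.sigma1}, now taken on the diagonal subgroup $\Z_2^{n+1}\subset O_{n+1}$ \cite{BBC07,GWgen}, and pass to projective versions. The representation category of the projective version of an orthogonal (quantum) group is the full subcategory spanned by the even tensor powers of the fundamental representation — equivalently, the part on which the central order-two element $-\mathrm{id}$ acts trivially — and the functor $T\mapsto\breve T$ carries this subcategory into itself. Restricted to it, $T\mapsto\breve T$ is the twisting functor attached to the induced data on $\Z_2^{n+1}/\{\pm\mathrm{id}\}\cong\Z_2^n\hookrightarrow PO_{n+1}$; a direct computation identifies this copy of $\Z_2^n$ with the translation group of $FQ_{n+1}$ and the induced bicharacter with \eqref{eq.sigma1} on $\Z_2^n$. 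Combining, $\breve{(PO_{n+1})}$ is the even part of $\breve{(O_{n+1})}=O_{n+1}^{-1}$, i.e.\ $PO_{n+1}^{-1}$. The main obstacle is exactly this last bookkeeping: the bicharacter \eqref{eq.sigma1} on $\Z_2^{n+1}$ does \emph{not} literally descend to the quotient $\Z_2^{n+1}/\{\pm\mathrm{id}\}$, so one must check that it nevertheless induces a well-defined twisting functor on the even subcategory and that this matches, under the above identification, the $\Z_2^n$-action and bicharacter appearing in Theorem~\ref{T3}. Once this is in place, the argument is entirely parallel to the treatment of $\breve Q_n$.
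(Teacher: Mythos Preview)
Your proposal is correct and follows essentially the same approach as the paper: the first claim from Theorem~\ref{T3}, then $G=PO_{n+1}^{-1}$ from \cite{GroAbSym}, then the passage from the known twist relation $O_{n+1}\leftrightarrow O_{n+1}^{-1}$ to the projective versions. The paper handles this last step in a single sentence (``since those are given just by restricting to a certain subalgebra''), whereas you are more explicit about the bookkeeping issue---that the $\Z_2^{n+1}$-bicharacter does not literally descend to $\Z_2^{n+1}/\{\pm\mathrm{id}\}$ and one must check the induced twist on the even subcategory matches the $\Z_2^n$-twist of Section~\ref{secc.Caytwist}---which is a fair point the paper glosses over.
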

\begin{proof}
The first sentence follows by construction directly from Theorem \ref{T3}. As we already mentioned, $O_{n+1}$ is a 2-cocycle twist of $O_{n+1}^{-1}$ with respect to the bicharacter \eqref{eq.sigma1}. Consequently, the same must hold also for $PO_{n+1}$ and $PO_{n+1}^{-1}$ since those are given just by restricting to a certain subalgebra.
\end{proof}

Secondly, we would like to prove that the anticommutative folded hypercube $\widebreve{FQ}_{n+1}$ can be obtained from the anticommutative hypercube $\breve Q_{n+1}$ by some folding, i.e.\ as a quotient graph. We can then virtually say that \emph{anticommutative folded hypercube equals folded anticommutative hypercube}, i.e. $\widebreve{FQ}_{n+1}=F\breve Q_{n+1}$.

\begin{prop}
The anticommutative folded hypercube $\widebreve{FQ}_{n+1}$ is a quantum quotient graph of $\breve Q_{n+1}$.
\end{prop}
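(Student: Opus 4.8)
\emph{Overview.} The plan is to build, by hand, a unital $*$-subalgebra $\breve\iota(\Cl_n)\subset\Cl_{n+1}$ realizing $C(\widebreve{FQ}_{n+1})$ inside $C(\breve Q_{n+1})$, and then to check that the quantum quotient graph attached to this inclusion (in the sense of the definition of quotient quantum graph in Section~\ref{sec.constructions}) has the adjacency operator of $\widebreve{FQ}_{n+1}$. One cannot shortcut this via the already established facts that $\breve Q_{n+1}$ is quantum isomorphic to $Q_{n+1}$, that $\widebreve{FQ}_{n+1}$ is quantum isomorphic to $FQ_{n+1}$, and that $FQ_{n+1}$ is classically a quotient of $Q_{n+1}$: being a quantum quotient graph is defined through an honest $*$-homomorphism, and, as pointed out in the remark after Proposition~\ref{P.qisomulti}, such properties are not transported by quantum isomorphisms. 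So the inclusion must be produced explicitly.

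\emph{The embedding.} Write $\breve\tau_1,\dots,\breve\tau_{n+1}$ for the generators of $\Cl_{n+1}=C(\breve Q_{n+1})$ and $\breve\tau_1^{(n)},\dots,\breve\tau_n^{(n)}$ for those of $\Cl_n=C(\widebreve{FQ}_{n+1})$. The classical embedding $C(FQ_{n+1})\hookrightarrow C(Q_{n+1})$ sends $\tau_\mu\mapsto\tau_{\hat\mu}$ with $\hat\mu=(\mu,\deg\mu\bmod 2)$, i.e.\ it identifies $C(FQ_{n+1})$ with the even part $C(\Z_2^{n+1})^{\mathrm{even}}$; the twisted analogue is obtained by setting
$$\breve\iota(\breve\tau_j^{(n)}):=\im\,\breve\tau_j\breve\tau_{n+1},\qquad j=1,\dots,n.$$
A one-line Clifford computation (using $\breve\tau_k^2=1$, $\breve\tau_k=\breve\tau_k^*$ and anticommutativity) shows that the elements $\im\,\breve\tau_j\breve\tau_{n+1}$ are self-adjoint, square to $1$, and pairwise anticommute, so by the universal property $\breve\iota$ extends to a unital $*$-homomorphism $\Cl_n\to\Cl_{n+1}$. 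It is injective, since it sends each basis element $\breve\tau_\mu^{(n)}$ to $c_\mu\breve\tau_{\hat\mu}$ with $c_\mu=\im^{\,\deg\mu\bmod 2}\neq 0$ and $\mu\mapsto\hat\mu$ injective; hence $\breve\iota(\Cl_n)=\Cl_{n+1}^{\mathrm{even}}$ is a $*$-subalgebra of $\Cl_{n+1}$, and thus $\widebreve{FQ}_{n+1}$ is a quantum quotient space of $\breve Q_{n+1}$. (The factor $\im$ is not optional: the naive map $\breve\tau_j^{(n)}\mapsto\breve\tau_j\breve\tau_{n+1}$ is multiplicative but fails to be $*$-preserving.)

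\emph{The adjacency operator.} Phrasing the quotient graph definition in terms of adjacency operators, as in the example following it, we must compute $\breve\iota^\dag\,\breve A_{\breve Q_{n+1}}\,\breve\iota$. Here $\breve A_{\breve Q_{n+1}}\breve\tau_\nu=(n+1-2\deg\nu)\breve\tau_\nu$ by formula~\eqref{eq.lambda}, and $\breve\iota^\dag\breve\tau_{\hat\mu}=2\,\bar c_\mu\,\breve\tau_\mu^{(n)}$ while $\breve\iota^\dag$ annihilates the odd part; the constant $2$ reflects that $\langle\breve\iota a,\breve\iota b\rangle_{\Cl_{n+1}}=2\langle a,b\rangle_{\Cl_n}$, since each $\breve\tau_\nu$ has squared norm equal to the dimension of its ambient algebra and $2^{n+1}/2^n=2$. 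Hence $\breve\iota^\dag\,\breve A_{\breve Q_{n+1}}\,\breve\iota(\breve\tau_\mu^{(n)})=2\,(n+1-2\deg\hat\mu)\,\breve\tau_\mu^{(n)}$, and since $\deg\hat\mu=2\lceil\tfrac12\deg\mu\rceil$ the bracket equals $n+1-4\lceil\tfrac12\deg\mu\rceil$, which is exactly the eigenvalue $\lambda_\mu$ of $\widebreve{FQ}_{n+1}$ from~\eqref{eq.FQlambda}. So the quotient graph is $\widebreve{FQ}_{n+1}$ up to the global factor $2$, i.e.\ up to the same doubling of edge multiplicities that already occurs classically when the quotient construction of Section~\ref{sec.constructions} is applied to $Q_{n+1}$ folded along the antipode.

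\emph{Main obstacle.} The computations are all routine; the two things that require care are (i) recognising the correct embedding $\breve\tau_j^{(n)}\mapsto\im\,\breve\tau_j\breve\tau_{n+1}$, the factor $\im$ being essential for $*$-preservation, and (ii) bookkeeping the scalars $c_\mu$ and the identity $\breve\iota^\dag\breve\iota=2\,\id$ so that the eigenvalues land precisely on~\eqref{eq.FQlambda}. Conceptually the proposition says that \emph{folding commutes with the comodule-algebra twist}; one could alternatively package the embedding step by observing that $\Cl_{n+1}^{\mathrm{even}}$ is the fixed-point algebra of the central element $-\id$ of the diagonal torus $\hat\Gamma\subset\breve G$, $\Gamma=\Z_2^{n+1}$ — the twist of the corresponding picture for $Q_{n+1}$ — but the explicit description above is all that is needed.
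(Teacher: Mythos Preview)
Your proof is correct and follows the same route as the paper: the embedding $\breve\tau_j^{(n)}\mapsto\im\,\breve\tau_j\breve\tau_{n+1}$ coincides with the paper's map (the paper specifies it on basis elements $\breve\tau_\mu$ rather than on generators, but one checks the two agree, with your $c_\mu=\im^{\,\deg\mu\bmod 2}$ matching the paper's case split), and the verification reduces to the same eigenvalue identity $\lambda_{\hat\mu}=n+1-4\lceil\tfrac12\deg\mu\rceil$. Your bookkeeping of $\breve\iota^\dag\breve\iota=2\,\id$ is in fact more careful than the paper's own argument, which computes $[\iota^\dag]^\nu_\mu$ as the entrywise conjugate of $\iota^\mu_\nu$ in the non-orthonormal bases $(\breve\tau_\mu)$ and thereby silently drops the very factor of $2$ you flag.
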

\begin{proof}
We need to establish a suitable embedding of the underlying algebras $\iota\colon\Cl_n\to\Cl_{n+1}$. We claim that the following works: Given $\mu\in\Z_2^n$ of even degree, we map $\breve\tau_\mu\mapsto\breve\tau_\mu$; if $\mu$ has odd degree, we map $\breve\tau_\mu\mapsto\im\breve\tau_\mu\breve\tau_{n+1}$, where $\im$ is the imaginary unit.

First, we need to show that this is indeed an injective $*$-homomorphism. The injectivity is obvious. We check that it behaves well with respect to multiplication:
\begin{align*}
\text{$\mu$, $\nu$ even:}&&\iota(\breve\tau_\mu)\iota(\breve\tau_\nu)&=\breve\tau_\mu\breve\tau_\nu=\iota(\breve\tau_\mu\breve\tau_\nu)\cr
\text{$\mu$ even, $\nu$ odd:}&&\iota(\breve\tau_\mu)\iota(\breve\tau_\nu)&=\breve\tau_\mu\cdot\im\breve\tau_\nu\breve\tau_{n+1}=\im\breve\tau_\mu\breve\tau_\nu\breve\tau_{n+1}=\iota(\breve\tau_\mu\breve\tau_\nu)\cr
\text{$\mu$ odd, $\nu$ even:}&&\iota(\breve\tau_\mu)\iota(\breve\tau_\nu)&=\im\breve\tau_\mu\breve\tau_{n+1}\cdot\breve\tau_\nu=\im\breve\tau_\mu\breve\tau_\nu\breve\tau_{n+1}=\iota(\breve\tau_\mu\breve\tau_\nu)\cr
\text{$\mu$, $\nu$ odd:}&&\iota(\breve\tau_\mu)\iota(\breve\tau_\nu)&=\im\breve\tau_\mu\breve\tau_{n+1}\cdot\im\breve\tau_\nu\breve\tau_{n+1}=-\im^2\breve\tau_\mu\breve\tau_\nu\breve\tau_{n+1}^2=\breve\tau_\mu\breve\tau_\nu=\iota(\breve\tau_\mu\breve\tau_\nu)
\end{align*}
And we need to check that it behaves well with respect to $*$. For $\mu$ even: $\iota(\breve\tau_\mu)^*=(\breve\tau_\mu)^*=\iota(\breve\tau_\mu^*)$; for $\mu$ odd: $\iota(\breve\tau_\mu)^*=(\im\breve\tau_\mu\breve\tau_{n+1})^*=-\im\breve\tau_{n+1}\breve\tau_\mu^*=\im\breve\tau_\mu^*\breve\tau_{n+1}=\iota(\breve\tau_\mu^*)$.

Finally, we need to check that $A_{\widebreve{FQ}_{n+1}}=\iota^\dag\circ A_{\breve Q_{n+1}}\circ\iota$. Recall that both adjacency matrices are given by $A\breve\tau_\mu=\lambda_\mu\breve\tau_\mu$, where $\lambda_\mu$ are eigenvalues of the classical Cayley graph, which we already computed in equation~\eqref{eq.FQlambda}. That is, we can express the adjacency operators as matrices in the bases $(\breve\tau_\mu)$ as
$$[A_{\breve{Q}_{n+1}}]^\mu_\nu=\delta_{\mu\nu}(n+1-2\deg\mu),\qquad[A_{\widebreve{FQ}_{n+1}}]^\mu_\nu=\delta_{\mu\nu}(n+1-4\lceil1/2\deg\mu\rceil).$$
Note that given $\mu\in\Z_2^{n+1}$ and $\nu\in\Z_2^n$, we can also express $\iota^\mu_\nu=\delta_{\mu\nu}=[\iota^\dag]^\nu_\mu$ if $\nu$ is even and $\iota^\mu_\nu=\im\delta_{\mu,\nu+\epsilon_{n+1}}$, so $[\iota^\dag]^\nu_\mu=-\im\delta_{\mu,\nu+\epsilon_{n+1}}$ if $\nu$ is odd. Now, we compute
$$[\iota^\dag A_{\breve Q_{n+1}}\iota]^\mu_\nu=
\begin{cases}
\delta_{\mu\nu}(n+1-2\deg\mu)&\text{if $\mu$ is even,}\\
\delta_{\mu\nu}(n-1-2\deg\mu)&\text{if $\mu$ is odd,}
\end{cases}
$$
which is what we wanted.
\end{proof}

\subsection{Anticommutative squared/halved hypercube}
Finally, let us have a look on the case of the squared hypercube $Q_n^2$ also known as the halved hypercube $\frac{1}{2}Q_{n+1}$. The situation here is actually slightly more complicated, so in order to avoid overloading this section with many abstract definitions, let us comment on this case rather briefly and informally.

The \emph{squared hypercube graph} $Q_n^2$ is the Cayley graph of $\Z_2^n$ with respect to the generating set $\{\epsilon_i\}_{i=1}^n\cup\{\epsilon_i+\epsilon_j\}_{i<j}$. That is, it was made by adding the diagonal edges to all 2-faces of $Q_n$. It is probably better known under the name \emph{halved hypercube graph} $\frac{1}{2}Q_{n+1}$ since it can also be obtained by \emph{halving} the $(n+1)$-dimensional hypercube -- taking one of the two components of the distance-2 graph of $Q_{n+1}$. Its spectrum is computed as
$$\lambda_\mu=\sum_{i=1}^n\tau_\mu(\epsilon_i)+\sum_{i<j}\tau_\mu(\epsilon_i+\epsilon_j)=l_d+\frac{1}{2}l_d^2-\frac{n}{2}=\frac{1}{2}\left((n+1-2d)^2-n-1\right),$$
where $l_d=n-2d$, $d=\deg\mu$.

\begin{defn}
The anticommutative deformation of the squared hypercube graph $Q_n^2$ in the sense of Def.~\ref{D.anticom} will be called the \emph{anticommutative squared hypercube graph} and denoted by $\breve Q_n^2:=\widebreve\Cay(\Z_2^n,\{\epsilon_i\}_{i=1}^n\cup\{\epsilon_i+\epsilon_j\}_{i<j})$.
\end{defn}

The quantum automorphism group of $Q_n^2=\frac{1}{2}Q_{n+1}$ is the anticommutative special orthogonal group $SO_{n+1}^{-1}$ \cite[Theorem~4.1]{GroAbSym}. This is known to be isomorphic to $PO_{n+1}^{-1}$ if $n$ is even. Hence, applying the 2-cocycle twist, we get $PO_{n+1}\simeq SO_{n+1}$ similarly as in the previous case.

\begin{prop}
The anticommutative squared hypercube $\breve Q_n^2$ is quantum isomorphic to the classical squared hypercube $Q_n^2$. For $n$ even, its quantum automorphism group is $SO_{n+1}\simeq PO_{n+1}$.
\end{prop}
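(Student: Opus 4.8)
The plan is to reproduce, step by step, the argument already used for $\breve Q_n$ and $\widebreve{FQ}_{n+1}$, with the only change being the $+1$ shift in the size of the relevant orthogonal quantum group. The first assertion needs no new work: by definition $\breve Q_n^2=\widebreve\Cay(\Z_2^n,S)$ with $S=\{\epsilon_i\}_{i=1}^n\cup\{\epsilon_i+\epsilon_j\}_{i<j}$, so Theorem~\ref{T3} directly yields that $\breve Q_n^2$ is quantum isomorphic to $\Cay(\Z_2^n,S)=Q_n^2$ and simultaneously identifies its quantum automorphism group as the 2-cocycle twist $\breve G$ of $G:=\Aut^+Q_n^2$ by the bicharacter \eqref{eq.sigma1}.

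For the second assertion I would first recall from \cite[Theorem~4.1]{GroAbSym} that $G=\Aut^+Q_n^2=\Aut^+\frac12 Q_{n+1}=SO_{n+1}^{-1}$. The key step is then to observe that $SO_{n+1}$ is itself the 2-cocycle twist of $SO_{n+1}^{-1}$ with respect to the same bicharacter \eqref{eq.sigma1}: this follows from the known fact that $O_{n+1}^{-1}$ is the twist of $O_{n+1}$ by \eqref{eq.sigma1} \cite{BBC07,GWgen} together with the involutivity of the twisting functor, after noting that the relations distinguishing $SO$ and $PO$ from $O$ only involve polynomials in the $u^i_j$ of even degree, on which the twisting functor acts compatibly with the passage from $O_{n+1}$ to $SO_{n+1}$ and $PO_{n+1}$. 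Hence $\breve G\simeq SO_{n+1}$. Finally, for $n$ even one has the isomorphism $SO_{n+1}^{-1}\simeq PO_{n+1}^{-1}$ used in \cite{GroAbSym}; applying the twisting functor to both sides gives $SO_{n+1}\simeq PO_{n+1}$, completing the argument.

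The step I expect to be the main obstacle is reconciling the two appearances of the bicharacter \eqref{eq.sigma1}. In Theorem~\ref{T3} it lives on the vertex group $\Z_2^n$ of the Cayley graph, sitting inside $\Aut^+Q_n^2$ as the diagonal torus $\widehat{\Z_2^n}$ obtained by Fourier-transforming the translation action; in the statement that $O_{n+1}^{-1}$ is a twist of $O_{n+1}$ it lives on the diagonal group of $O_{n+1}$, which is $\Z_2^{n+1}$ rather than $\Z_2^n$. To match them I would use the realization of $\frac12 Q_{n+1}$ as a quotient of $Q_{n+1}$: the translation group $\Z_2^n$ of the halved hypercube embeds into $\Z_2^{n+1}$ as the subgroup of even-degree elements, via $\epsilon_i\mapsto\epsilon_i+\epsilon_{n+1}$, and one checks that the restriction of \eqref{eq.sigma1} from $\Z_2^{n+1}$ to this subgroup agrees, up to a coboundary (which leaves the twisted quantum group unchanged), with \eqref{eq.sigma1} on $\Z_2^n$. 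This is the same piece of bookkeeping that is left implicit in the proof for $\widebreve{FQ}_{n+1}$; the folding makes it a little more delicate, but once it is settled everything else is purely formal.
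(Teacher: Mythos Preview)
Your overall strategy matches the paper's: quantum isomorphism from Theorem~\ref{T3}, then identify the twisted quantum automorphism group via the known result $\Aut^+Q_n^2=SO_{n+1}^{-1}$ and the twist relation between $O_{n+1}$ and $O_{n+1}^{-1}$. However, one step in your justification is not right, and the paper orders the argument differently precisely to avoid it.

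You claim that ``the relations distinguishing $SO$ and $PO$ from $O$ only involve polynomials in the $u^i_j$ of even degree.'' This is true for $PO$ (by definition the even-degree subalgebra), but false for $SO$: the determinant relation has degree $n+1$, which for $n$ even is odd. So your direct argument that the twist of $SO_{n+1}^{-1}$ is $SO_{n+1}$ does not go through as stated. Indeed, the paper remarks afterwards that for $n$ odd, $SO_{n+1}$ and $SO_{n+1}^{-1}$ are not even monoidally equivalent---so there is no uniform ``twist of $SO$ is $SO$'' statement, and one really must use the parity assumption somewhere specific. The paper's route is: for $n$ even, first invoke $SO_{n+1}^{-1}\simeq PO_{n+1}^{-1}$; then twist $PO_{n+1}^{-1}$ to $PO_{n+1}$ (here the even-degree argument is valid, since $PO$ is literally the even-degree subalgebra of $O$); and finally use the classical isomorphism $PO_{n+1}\simeq SO_{n+1}$ for odd $n+1$. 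You have the same ingredients, but your ordering forces you through an incorrect claim about the degree of the determinant.

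Your final paragraph raises a genuine issue that the paper leaves implicit in all three cases (hypercube, folded, squared): the bicharacter \eqref{eq.sigma1} used to twist the Cayley graph lives on $\Z_2^n$, while the one used to relate $O_{n+1}$ and $O_{n+1}^{-1}$ lives on $\Z_2^{n+1}$. The paper simply writes ``similarly as in the previous case,'' so your instinct to flag it is correct; but note that any actual reconciliation should be done through the $PO$ picture rather than $SO$, for the degree reason above.
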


It actually also holds that $\breve Q_n^2$ can be obtained by \emph{squaring} the hypercube graph $\breve Q_n$ in the following sense. Recall that given a graph $X$, its \emph{$k$-th power} is the graph defined on the same vertex set, having an edge between two vertices if and only if their distance in the original graph is less or equal to $k$. In particular, considering a Cayley graph $\Cay(\Gamma,S)$, we can construct its $k$-th power by considering the set $S\cup(S+S)\cup\dots\cup kS$. This idea directly translates into the anticommutative case or any other deformations.

If $n$ is even, then this graph can also be constructed by \emph{halving} the hypercube graph $\breve Q_{n+1}$. However, for $n$ odd, this \emph{does not} work. The argumentation goes as follows. Classically, taking a (connected) bipartite graph $X$, its distance-two graph splits into two connected components $X^+$ and $X^-$ -- the halved graphs. Often, as in the case of the hypercube, the two components are actually equal and denoted by $\frac{1}{2}X$. In our case, we can start with $\breve Q_{n+1}$ and denote by $A$ its adjacency matrix. The distance-two graph can be constructed simply by squaring the adjacency matrix. However, this way we create some loops and multiple edges, which one typically wants to remove. One can check that the quantum graph given by the adjacency matrix $A^2-(n+1)I$ already contains no loops and every edge has multiplicity two. That is, $\frac{1}{2}(A^2-(n+1)I)$ is a simple quantum graph. Exactly as in the classical case, where we need to remove $n+1$ loops at every vertex and then we are left with a double edge at each face diagonal. If $n$ is even, then this graph is defined on the quantum space $X$ with $C(X)=\Cl_{n+1}\simeq M_{2^{n/2}}(\C)\oplus M_{2^{n/2}}(\C)$ (see e.g.\ \cite{Tra06} Eqs.~(14--15) for the structure of Clifford algebras), so it is perfectly possible to construct a subgraph by restricting to one of the summands $M_{2^{n/2}}\subset X$. One can then check that this subgraph exactly equals to $\breve Q_n^2=\frac{1}{2}\breve Q_{n+1}$. In contrast, if $n$ is odd, then $\Cl_{n+1}\simeq M_{2^{(n+1)/2}}(\C)$ and hence the quantum space $X=M_{2^{(n+1)/2}}$ has no proper subspaces.

Let us now try to identify the core of the problem in terms of relations. This also reveals the reason why the quantum automorphism group of $\breve Q_n^2$ is not $SO_{n+1}$ for $n$ odd. (Actually, one can show that $SO_{n+1}$ is not at all monoidally equivalent to $SO_{n+1}^{-1}$ for $n$ odd.)

In the classical case, the vertex set of $Q_n^2=\frac{1}{2}Q_{n+1}$ is a subset of the vertex set of $Q_{n+1}$. In the language of algebras, there is a surjective $*$-homomorphism from $C(Q_{n+1})=\C\Z_2^{n+1}$ to $C(Q_n^2)=C(Q_n)=\C\Z_2^n$ mapping $\tau_i\mapsto\tau_i$ for $1\le i\le n$ and $\tau_{n+1}\mapsto\tau_1\cdots\tau_n$. Respecting this quotient structure, $SO_{n+1}^{-1}$ acts on $C(Q_n^2)$ by $\tau_i\mapsto\sum_{j=1}^{n+1}\tau_j\otimes u_i^j$, where $\tau_{n+1}=\tau_1\cdots\tau_n$ and $\Olg(SO_{n+1}^{-1})$ is a quotient of $\Olg(O_{n+1}^{-1})$ by a certain determinant-like condition $\sum_{\pi\in S_{n+1}}u^1_{\pi(1)}\cdots u^n_{\pi(n)}=1$. See \cite{GroAbSym} for details. 

The same story can be told also for the anticommutative version $\breve Q_n^2$ if $n$ even. If $n$ is odd, then this does not work since the element $\tau_1\cdots\tau_n\in C(\breve Q_n^2)=\Cl_n$ commutes with all the elements $\tau_1,\dots,\tau_n$ and hence cannot be the quotient image of $\tau_{n+1}$. Consequently, for $n$ odd, $\breve Q_n^2$ cannot be made by halving the anticommutative hypercube $\breve Q_{n+1}$. Instead, it is actually obtained by halving the \emph{$n$-times anticommutative and once commutative hypercube} $\breve Q_{(n\mid 1)}$, where $C(Q_{(n\mid 1)})=\Cl_n\oplus\C\Z_2$ generated by elements $\tau_1,\dots,\tau_{n+1}$ with $\tau_i^2=\tau_i=\tau_i^*$ such that $\tau_1,\dots,\tau_n$ mutually anticommute, but $\tau_{n+1}$ commutes with everything.

Following this logic, the quantum automorphism group of $\breve Q_n^2$ should be denoted by $SO_{(n\mid 1)}^{-1}$.

\section{Explicit examples of small sizes}
\label{sec.smallnoniso}

\subsection{Interpreting graphs over $M_2$}

In Section~\ref{secc.smallclass}, we proved that there are exactly four simple graphs over the quantum space $M_2$. They are uniquely determined by the \emph{number of quantum edges} $\rank\tilde A$, which can equal to 0, 1, 2, or 3. Instead of the number of quantum edges, which is not invariant under quantum isomorphisms, we should use the invariant number of edges $\eta^\dag A\eta\in\{0,4,8,12\}$. Recall that this number should be interpreted as the number of oriented edges.

We obtain the space $M_2$ by twisting the classical space of four points $X_4$ if we interpret it as the group $\Gamma=\Z_2\times\Z_2$. This is a special case of the situation from both Section~\ref{sec.rook} (where we studied $\Gamma=\Z_n\times\Z_n$) and Section~\ref{sec.cube} (where we studied $\Gamma=\Z_2^n$).

There are exactly four non-isomorphic simple Cayley graphs $\Cay(\Z_2\times\Z_2,S)$
\begin{itemize}
\item the empty graph for $S=\emptyset$,
\item two segments for $S=\{\epsilon_1\}$,
\item the square for $S=\{\epsilon_1,\epsilon_2\}$,
\item the full graph for $S=\{\epsilon_1,\epsilon_2,\epsilon_1+\epsilon_2\}=\Gamma\setminus\{0\}$.
\end{itemize}

Those graphs have exactly 0, 4, 8, and 12 directed edges respectively. Twisting them using the previously defined bicharacter, we get graphs over $M_2$. Namely, those must be exactly the only four simple quantum graphs on $M_2$.

\begin{prop}
\label{P.smallqiso}
All simple quantum graphs on $M_2$ are quantum Cayley graphs of $\Z_2\times\Z_2$. In particular, they are quantum isomorphic to classical graphs.
\end{prop}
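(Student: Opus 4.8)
The plan is to combine the classification of simple quantum graphs on $M_2$ from Theorem~\ref{T2} with the twisting construction of Section~\ref{sec.twist} applied to the group $\Gamma=\Z_2\times\Z_2$, whose order is exactly $4=2^2$. Two facts from the preceding sections will be used as black boxes. First, by Theorem~\ref{T2} a simple quantum graph on $M_2$ is determined up to isomorphism by its number of quantum edges $m\in\{0,1,2,3\}$, and such a graph is $m$-regular, so its number of directed edges is $\eta^\dag A\eta=m\,\eta^\dag\eta=4m$. Second, by the computation of Section~\ref{sec.rook} in the case $n=2$ (equivalently Section~\ref{sec.cube} with $\Z_2^2$ and $\Cl_2\simeq M_2(\C)$), the comodule algebra twist $\breve\Gamma$ of $C(\Gamma)$ by the bicharacter $\sigma$ is isomorphic to $M_2$, so every twisted Cayley graph $\widebreve\Cay(\Z_2\times\Z_2,S)$ is a quantum graph over $M_2$.

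First I would run through the four isomorphism classes of simple classical Cayley graphs of $\Z_2\times\Z_2$ listed just above the statement: the empty graph ($S=\emptyset$, $0$ directed edges), the disjoint union of two edges ($|S|=1$, $4$ directed edges), the $4$-cycle ($|S|=2$, $8$ directed edges), and $K_4$ ($|S|=3$, $12$ directed edges). Each of these is a simple classical graph, so by Theorem~\ref{T3} its twist $\widebreve\Cay(\Z_2\times\Z_2,S)$ is quantum isomorphic to it, and by Proposition~\ref{P.qisomulti} the twist is again a simple quantum graph and has the same number $4|S|$ of directed edges. Since the twist is a simple quantum graph on $M_2$, it is $m'$-regular with $4m'$ directed edges for some $m'\in\{0,1,2,3\}$; comparing counts gives $m'=|S|$. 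Thus the twist of the classical graph with $|S|=m$ is a simple quantum graph on $M_2$ with exactly $m$ quantum edges.

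Hence the four graphs $\widebreve\Cay(\Z_2\times\Z_2,S)$ with $|S|=0,1,2,3$ realize all four isomorphism classes of simple quantum graphs on $M_2$ predicted by Theorem~\ref{T2}. Consequently every simple quantum graph on $M_2$ is isomorphic, as a quantum graph, to some $\widebreve\Cay(\Z_2\times\Z_2,S)$; and since a $*$-isomorphism intertwining the adjacency operators yields an isomorphism of the quantum automorphism groups, hence a monoidal $\dag$-isomorphism of their representation categories sending $A\mapsto A'$ — i.e.\ a quantum isomorphism in the sense of Definition~\ref{D.qiso} — Theorem~\ref{T3} gives that it is quantum isomorphic to the classical Cayley graph $\Cay(\Z_2\times\Z_2,S)$. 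This establishes both assertions.

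I do not expect a genuine obstacle once Theorems~\ref{T2} and~\ref{T3} are in hand; the only points requiring care are (i) confirming that the twist of $C(\Z_2\times\Z_2)$ really is $M_2$ rather than some other $4$-dimensional C*-algebra, which is exactly the identification $\breve\Gamma\simeq M_2$ of Section~\ref{sec.rook}/Section~\ref{sec.cube}, and (ii) checking that the properties \emph{simple quantum graph} and the edge count are transported correctly by a quantum isomorphism, which is Proposition~\ref{P.qisomulti}. An alternative route that avoids the $m$-regularity input is to compute $\breve A=\mathop{\rm diag}(\lambda_\mu)$ explicitly for each $S$ via \eqref{eq.lambda}, transport it to the basis $(e_{ij})$ of $M_2$ using the isomorphism $\phi$ of Section~\ref{sec.rook}, and match the result directly against the adjacency matrices $0$, $P_1$, $P_1+P_2$, $P_1+P_2+P_3$ of Example~\ref{E.2}.
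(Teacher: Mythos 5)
Your proposal is correct and follows essentially the same route as the paper: both arguments combine the classification of Theorem~\ref{T2} (a simple quantum graph on $M_2$ is determined by its edge count $\eta^\dag A\eta\in\{0,4,8,12\}$) with the fact that the four twisted Cayley graphs $\widebreve\Cay(\Z_2\times\Z_2,S)$ are simple quantum graphs on $M_2$ with exactly these edge counts, since quantum isomorphism preserves simplicity and the number of edges by Proposition~\ref{P.qisomulti}. Your write-up is in fact slightly more careful than the paper's (e.g.\ taking $S=\Gamma\setminus\{0\}$ for the full graph and spelling out why a graph isomorphism induces a quantum isomorphism), but there is no substantive difference in method.
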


The explicit form of the quantum isomorphism (in the form of a bigalois extension) was computed in \cite[Section~4.3]{Mat21}.



\subsection{Quantum graph, which is not quantum isomorphic to a classical graph}

In the last few sections, we constructed many graphs that were all quantum isomorphic to some classical ones. A natural question is then, whether there also is a graph, which is not quantum isomorphic to any classical graph. We have a simple criterion for finding these.

\begin{prop}
\label{P.nonqiso}
Let $(X,A)$ be a quantum graph. Denote by $u$ the fundamental representation of $\Aut^+(X,A)$. If the endomorphism algebra $\Mor(u,u)$ is non-commutative with respect to the Schur product, then $(X,A)$ is not quantum isomorphic to any classical graph. 
\end{prop}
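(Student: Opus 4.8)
The plan is to contrapose: if $(X,A)$ is quantum isomorphic to some classical graph $(Y,A_Y)$ with $Y=X_N$ a classical set, then the endomorphism algebra $\Mor(u,u)$ must be commutative with respect to the Schur product. The key input is that quantum isomorphism of graphs, by Definition~\ref{D.qiso}, gives a monoidal $\dag$-category isomorphism $\RCat_{\Aut^+(X,A)}\to\RCat_{\Aut^+(Y,A_Y)}$ sending $A\mapsto A_Y$ and $u\mapsto v$ (the fundamental representation of $\Aut^+Y$). As noted already in the proof of Proposition~\ref{P.qisomulti}, such an isomorphism identifies the endomorphism algebra $\Mor(u,u)$ with $\Mor(v,v)$ not merely as algebras under composition but also with respect to the Schur product $\bullet$, since $\bullet$ is built from $m$, $m^\dag$ (Proposition~\ref{P.Schur}), which are part of the categorical data preserved by the monoidal equivalence. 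Hence it suffices to show that for a \emph{classical} graph $(X_N,A_Y)$, the Schur product on $\Mor(v,v)$ is commutative.

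First I would recall that for the classical set $X_N$, the Frobenius algebra $C(X_N)=\C^N$ has a basis of orthogonal idempotents, and the Schur product on $\Lin(l^2(X_N))$ is, in this basis, literally the entrywise (Hadamard) product of matrices: from Proposition~\ref{P.Schur}, $[A\bullet B]^i_j = m(A\otimes B)m^\dag$ evaluated in the basis $(e_i)$ of minimal projections gives $[A\bullet B]^i_j=A^i_jB^i_j$, because $m^i_{jk}=\delta_{ijk}$ for the classical algebra. The Hadamard product is manifestly commutative on all of $\Lin(l^2(X_N))$, hence \emph{a fortiori} on the subalgebra $\Mor(v,v)$. Therefore $\Mor(v,v)$ is Schur-commutative, and transporting back through the monoidal isomorphism, $\Mor(u,u)$ is Schur-commutative, contradicting the hypothesis. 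This proves that no such classical $(Y,A_Y)$ can exist.

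One subtlety to address carefully is that a quantum graph quantum-isomorphic to a \emph{classical} graph need not have classical underlying quantum space a priori — but Definition~\ref{D.qiso} of quantum isomorphism to a classical graph forces $\Aut^+Y$ to act on the commutative algebra $C(X_N)$, so $Y=X_N$ genuinely has classical (commutative) function algebra, and the Hadamard-product description applies. I would also remark that the hypothesis ``non-commutative with respect to the Schur product'' is exactly what is needed: the ordinary composition product on $\Mor(u,u)$ can well be noncommutative even for classical graphs (e.g.\ directed ones), so it is crucial that the criterion is stated for $\bullet$ and not for $\circ$.

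I expect the main (and only real) obstacle to be the bookkeeping that the monoidal $\dag$-isomorphism of representation categories genuinely preserves the Schur product structure on the endomorphism spaces. This is because $\bullet$ is not a morphism in the category but an auxiliary operation; one must observe that $A\bullet B$ is obtained by composing $A\otimes B\in\Mor(u^{\otimes 2},u^{\otimes 2})$ with the fixed morphisms $m\in\Mor(u^{\otimes 2},u)$ and $m^\dag\in\Mor(u,u^{\otimes 2})$, all of which lie in the category and are preserved by any monoidal $\dag$-functor. Once this is spelled out — and it is essentially already contained in the parenthetical remark in the proof of Proposition~\ref{P.qisomulti} — the rest is immediate. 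The proof is therefore short: reduce to the Hadamard-product description for classical graphs, observe commutativity, and transport back.
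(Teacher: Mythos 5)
Your proof is correct and follows essentially the same route as the paper: contrapose, note that the Schur product is built from $m$ and $m^\dag$ and hence transported by the monoidal $\dag$-isomorphism coming from a quantum isomorphism, and observe that for a classical graph the Schur product is the entrywise (Hadamard) product and therefore commutative. Your version merely spells out the bookkeeping that the paper's three-sentence proof leaves implicit.
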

\begin{proof}
Classically, the Schur product is just the pointwise multiplication of matrices, so it is commutative. Quantum isomorphisms preserve the Schur product since it is defined using the C*-algebra product $m$. In particular, quantum isomorphism must preserve the commutativity of the Schur product.
\end{proof}

Let us now bring some examples. First of all, any of the graphs containing a partial loop satisfies this property.

\begin{defn}
We say that a quantum graph $(X,\tilde A)$ contains a \emph{partial loop} if $A\bullet I\neq I\bullet A$.
\end{defn}

\begin{prop}
A quantum graph containing a partial loop is not quantum isomorphic to any classical graph.
\end{prop}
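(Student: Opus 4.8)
The plan is to derive this immediately from Proposition~\ref{P.nonqiso}, so the whole argument amounts to checking that a partial loop forces the endomorphism algebra $\Mor(u,u)$ of $u$, the fundamental representation of $G:=\Aut^+(X,\tilde A)$, to be non-commutative for the Schur product~$\bullet$. Concretely, I would exhibit two explicit elements of $\Mor(u,u)$ that fail to $\bullet$-commute, namely the adjacency matrix $A$ itself and the identity operator $I=\id_{l^2(X)}$.

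First I would note that $A\in\Mor(u,u)$: this is nothing but the defining relation $Au=uA$ that we imposed when constructing $\Olg(\Aut^+X)$ in Definition~\ref{D.QAut}. Next, $I\in\Mor(u,u)$ holds trivially for \emph{any} representation, since $Iu=u=uI$. Hence both $A$ and $I$ lie in $\Mor(u,u)$. By the hypothesis that $(X,\tilde A)$ contains a partial loop we have, by definition, $A\bullet I\neq I\bullet A$, so $\Mor(u,u)$ is indeed non-commutative with respect to $\bullet$. Proposition~\ref{P.nonqiso} then yields at once that $(X,\tilde A)$ is not quantum isomorphic to any classical graph.

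There is essentially no obstacle here: the only point worth a word is that the Schur product genuinely restricts to a well-defined operation on $\Mor(u,u)$ that is transported by monoidal equivalences, but this was already recorded in the proof of Proposition~\ref{P.qisomulti} (the monoidal equivalence induces an isomorphism of $\Mor(u,u)$ not only for $\circ$ but also for $\bullet$). So the proof is a two-line corollary, and I would present it as such.
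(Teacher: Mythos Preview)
Your proposal is correct and is precisely the unpacking of the paper's one-line proof ``Follows directly from Proposition~\ref{P.nonqiso}'': you exhibit $A$ and $I$ as elements of $\Mor(u,u)$ whose Schur product fails to commute by the very definition of a partial loop, and then invoke Proposition~\ref{P.nonqiso}. There is nothing to add or change.
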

\begin{proof}
Follows directly from Proposition~\ref{P.nonqiso}
\end{proof}

In particular, all the graphs from Proposition~\ref{P.partialex} for $t\in(0,\pi/2)$ are not quantum isomorphic to any classical graph.

Now, we might be interested in an example of a {\em simple} quantum graph, which is not quantum isomorphic to a classical one. As follows from Proposition~\ref{P.smallqiso}, there is no example over $M_2$, so we need to go over $M_3$.

\begin{ex}
Consider the matrix
$$\lambda_8=\frac{1}{\sqrt2}
\begin{pmatrix}
1&0&0\\
0&1&0\\
0&0&-2
\end{pmatrix},
$$
which is one of the \emph{Gell-Mann matrices} spanning the Lie algebra $\su_3$. (We choose a different normalization than usual, so that $\Tr(\lambda_8^\dag\lambda_8)=3$.)

So, the adjacency matrix $A:=\lambda_8\otimes\lambda_8$ defines a simple graph on $M_3$. The corresponding endomorphism algebra $\Mor(u,u)$ contains $A$ by definition and $A^2$ by closedness under composition. A straightforward computation shows that $A\bullet A^2\neq A^2\bullet A$, so the graph $(M_3,\tilde A)$ is not quantum isomorphic to any classical graph.
\end{ex}

\bibliographystyle{halpha}
\bibliography{mybase}

\end{document}